\def\section{\@startsection{section}{1}%
\z@{1\linespacing\@plus\linespacing}{1\linespacing}%
{\bf\centering}}
\def\subsection{\@startsection{subsection}{0}%
\z@{\linespacing\@plus\linespacing}{\linespacing}%
{\bf}}
\def\subsubsection{\@startsection{subsubsection}{0}%
\z@{\linespacing\@plus\linespacing}{\linespacing}%
{\bf}}
\newtheorem{theorem}{Theorem}[section]
\newtheorem{lemma}[theorem]{Lemma}
\newtheorem{remark}[theorem]{Remark}
\theoremstyle{definition}
\newtheorem{example}[theorem]{Example}
\def\RR{\mathbb{R}}
\def\R{\mathbb{R}}
\def\NN{\mathbb{N}}
\def\PP{\mathbb{P}}
\def\EE{\mathbb{E}}
\def\rP{\mathrm P}
\def\rE{\mathrm E}
\def\cal{\mathcal}
\def\al{\alpha}
\def\be{\beta}
\def\ga{\gamma}
\def\de{\delta}
\def\ep{\varepsilon}
\def\la{\lambda}
\def\sk{\smallskip}
\def\mk{\medskip}
\def\ov{\overline}
\def\un{\underline}
\def\wt{\widetilde}
\def\wh{\widehat}
\def\rd{\mathrm{d}}  
\newcommand{\bra}[1]{\left\lbrace#1\right\rbrace}
\providecommand{\pro}[1]{\{#1_t, t \geq 0\}}
\providecommand{\proo}[1]{\{#1(x), {x \in \R}\}}
\begin{document}
\title
{On the favorite points of symmetric L\'evy processes}
\author{Bo Li \and Yimin Xiao \and Xiaochuan Yang}

\address{Bo Li. School of Mathematics and Statistics, Central China Normal University\\
Wuhan, 430079, China}
\email{haoyoulibo@hotmail.com}

\address{Yimin Xiao. Dept. Statistics \& Probability, Michigan State University, 48824 East Lansing, MI, USA}
\email{xiaoyimi@stt.msu.edu}

\address{Xiaochuan Yang. Dept. Statistics \& Probability, Michigan State University, 48824 East Lansing, MI, USA}
\email{yangxi43@stt.msu.edu, xiaochuan.j.yang@gmail.com}

\thanks{\emph{Key-words}:  L\'evy processes, Local times, Favorite points, Gaussian processes, Lower tail probability 
 \\ 
\noindent 2010 {\it MS Classification}: Primary 60J75, 60J55, 60G15
}

\begin{abstract}
This paper is concerned with asymptotic behavior (at zero and at infinity) of the favorite points 
of L\'evy processes. By exploring Molchan's idea for deriving lower tail probabilities of Gaussian processes 
with stationary increments, we extend the result of Marcus (2001) on the favorite points to a larger class 
of symmetric L\'evy processes. 
\end{abstract}

\maketitle

\baselineskip 0.5 cm

\bigskip \medskip

\section{Introduction}\label{sec:intro}

Let $X = \{{X_t}, t \ge 0,\, \PP^x\}$ be a real-valued L\'evy process with characteristic exponent $\psi$
which is given by
\begin{align*}
\EE^0[e^{i\lambda X_t}]= e^{-t\psi(\lambda)}.
\end{align*}
Many sample path properties of $X$, including the existence and regularity of local times,
can be described explicitly in terms of $\psi$. Building upon the seminal results of Kesten
\cite{kesten69}, Hawkes \cite{hawkes86} showed that the local times of $X$, denoted by
$\{L_t^x, t \ge 0, x \in \RR\}$, exist as the Radon-Nikodym derivative of the occupation
measure with respect to the Lebesgue measure on $\RR$, namely, for all $t \ge 0$ and all
Borel measurable functions
$f: \RR \to \RR$, 
\begin{equation*}
\int_0^t f(X_s)\rd s = \int_\RR f(x)L^x_t\rd x,
\end{equation*}
if and only if  Re$\big(1/(1+\psi) \big)\in L^1(\RR)$. See also Bertoin
\cite[p.126]{Bertoin96book} for more information.
Several authors have studied the a.s. joint continuity of $(t, x) \mapsto L_t^x$ and, finally,
Barlow and Hawkes \cite{BarlowHawkes85}, Barlow \cite{barlow88}, Marcus and Rosen
\cite{MarcusRosen92} established sufficient and necessary conditions for the joint
continuity of the local times of L\'evy processes in terms of their 1-potential
kernel densities.  They also determined the exact uniform and local moduli of
continuity for the local time $L_t^x$ in the space variable $x$. The approaches
in \cite{BarlowHawkes85,barlow88} and \cite{MarcusRosen92}  are different.
\cite{BarlowHawkes85,barlow88} considered general L\'evy processes and their
method was based on Dudley's metric entropy method; while \cite{MarcusRosen92}
considered strongly symmetric Markov processes and made use of  an isomorphism
theorem of Dynkin, which relates sample path behavior of local times of a symmetric
Markov process with those of the associated Gaussian processes. An extension of
the results in \cite{MarcusRosen92} to local times of {non-}symmetric
Markov processes was established by Eisenbaum and Kaspi \cite{EK07}. Recently,
Marcus and Rosen \cite{MR13}  extended the sufficiency part  for the joint
continuity of local times in \cite{MarcusRosen92} to
{Markov} processes which are not necessarily symmetric. Their arguments are
based on an isomorphism theorem of Eisenbaum and Kaspi \cite{EK09} which relates
local times of Markov processes with permanental processes.
We refer to the book of Marcus and Rosen \cite{MarcusRosen06} for a systematic
account on connections between local times of Markov processes and Gaussian processes.

\sk
This paper is concerned with asymptotic properties of the local times
of a class of symmetric L\'evy processes.
Let $X = \{{X_t}, t \ge 0\}$ be a real-valued symmetric L\'evy process.
Recall that from \cite{BassEisenbaumShi00} the set of favorite points
(or most visited sites) of $X$ up to time $t$ is defined by
\[
{\cal V}_t = \{ x\in\RR :\, L^x_t=\sup_{y\in\RR} L^y_t\Big\}
\]
and the favorite points process of $X$ is defined by
\begin{align*}
V_t= \inf\Big\{ |x|\in\RR :\, L^x_t=\sup_{y\in\RR} L^y_t\Big\}.
\end{align*}
As pointed out by Bass {\it et al.} \cite{BassEisenbaumShi00} (see also
Marcus \cite{Marcus01}) the choice of $V_t$ is not significant. The
results on $V_t$ mentioned in this paper are still valid if
it is replaced by any other element of ${\cal V}_t$.

{Bass and Griffin \cite{BassGriffin85} initiated the study of favorite points and established a
quite surprising result for Brownian motion. Indeed one might expect that a real-valued Brownian
motion starting from zero admits zero as one of its favorite points in the long run, but Bass and Griffin
showed that the favorite points process of a Brownian motion is actually transient, namely,
$\lim\limits_{t \to \infty} V_t = \infty$. Their proof relies on the Ray-Knight Theorem.} Later, Bass {\it et al.}
\cite{BassEisenbaumShi00} proved that when $X$ is a symmetric stable L\'evy process in $\RR$ of index
$\alpha \in (1, 2)$,  its favorite points process is {still} transient.  {They appealed to a generalized
Ray-Knight Theorem \cite{eisenbaum2000ray} and made a clever use of Slepian's Lemma. }
Marcus \cite{Marcus01} adopted the approach of \cite{BassEisenbaumShi00} and extended their result to a
large class of symmetric L\'evy processes.  On the other hand, Eisenbaum and Khoshnevisan \cite{EKh02}
studied the hitting probability of the set of favorite  points ${\cal V}_t$. More specifically, they call a compact
set $K \subset \RR$ {\it polar} for ${\cal V} := \bigcup_{t > 0} {\cal V}_t$ if
\[
\PP\big( \exists t > 0: {\cal V}_t \cap K \ne \emptyset \big) = 0.
\]
It has been an open problem to establish a criterion for a general compact set $K$ to be polar. However,
Eisenbaum and Khoshnevisan \cite{EKh02} proved that all singletons are polar for a large class of symmetric
Markov processes, including symmetric stable L\'evy processes in $\RR$ with $\alpha \in (1,2)$.



The present paper is mainly motivated by Marcus \cite{Marcus01} and
a remark of Marcus and Rosen \cite[p.527]{MarcusRosen06}. In his study
of the asymptotic behavior of the favorite points process $\{V_t, t \ge 0\}$
as $t \to \infty$, Marcus \cite{Marcus01} assumed that the characteristic
exponent $\psi(\la)$ of a symmetric L\'evy process $X$ in $\RR$ satisfies
the following two conditions:
\begin{enumerate}
\item[(i)] $\psi(\la)$ is regularly varying at zero with index
$1<\al\le 2$ and $1/(1+\psi(\lambda))\in L^1(\RR,d\lambda)$;
\item[(ii)] $\sigma_0^2(x)$ is increasing on $[0,\infty)$, where
\begin{align}\label{F:sigma}
\sigma_0^2(x) = \frac{2}{\pi}\int_0^\infty \big(1-\cos(\la x) \big) \frac{\rd\la}{\psi(\la)}, \quad x \in \RR.
\end{align}
\end{enumerate}
We remark first that the regularly varying condition in (i) is somewhat restrictive (e.g.,
as shown by Choi \cite[Proposition 2.4]{choi1994},
the characteristic exponent of a general semi-stable L\'evy process may
not be regularly varying. See Section \ref{sec:example} below for more examples.)
Secondly, condition (ii) is not easy to verify even if condition (i) is satisfied.
This issue was also noticed by Marcus and Rosen \cite[p.527]{MarcusRosen06} who
suggested to generalize the change of measure argument due to Molchan
\cite{Molchan99,Molchan00} to obtain a key estimate instead of using Slepian's
Lemma which requires the monotonicity condition (ii).  In this paper, we will relax
the conditions (i) and (ii) significantly and study the asymptotic properties of
$\{V_t, t \ge 0\}$ not only as $t \to \infty$, but also as $t \to 0$.  Moreover,
our results also show that Theorem 1.3 and Proposition 1.4 of Eisenbaum and
Khoshnevisan \cite{EKh02} are applicable to L\'evy processes that satisfy Condition
(C1). Hence all singletons are polar for the set of favorite points of such a L\'evy
process.





Now we introduce the conditions used in this paper. Recall that for a symmetric
L\'evy process $X$ in $\RR$, its characteristic exponent $\psi$ is nonnegative
and has the following L\'evy-Khintchine representation:
\begin{align}\label{eq:LK_formula}
\psi(\la) = A^2 \lambda^2 + \int_0^\infty \big(1-\cos(x\la) \big) \nu(\rd x),
\end{align}
where $\nu$ is a Borel measure on $(0, \infty)$ that satisfies
$\int_0^\infty (1 \wedge x^2) \nu(\rd x) < \infty$ and is called the L\'evy
measure of $X$. Except in Section 6.1, we tacitly assume that the following holds:
\begin{equation}\label{Con:general}
\int_1^\infty \frac{d\lambda}{\psi(\lambda)} < \infty \ \ \hbox{ and }\ \ \
\int_0^1 \frac{d\lambda} {\psi(\lambda)} = \infty.
\end{equation}
The convergence of the first integral in (\ref{Con:general}) is equivalent to
the existence of local times  \cite{hawkes86}, while the second condition in
(\ref{Con:general}) is equivalent to the recurrence of $X$ (see Port and Stone
\cite[Thm 16.2]{PortStone71AIF}).

As in Bass {\it et al.} \cite{BassEisenbaumShi00} and Marcus \cite{Marcus01},
we will focus on  symmetric, pure-jump L\'evy processes, i.e., we assume that
$A = 0$. Some results for symmetric L\'evy processes with a Brownian motion
part can be derived from those for the pure jump part, see Section \ref{sec_ext}.

We will further assume that the L\'evy measure $\nu$ is absolutely continuous
$\nu(\rd x) = \rd x/\theta(x)$. It is possible to consider the case when $\nu$
is discrete or singular using the methods in Berman \cite{Berman87} or Luan and
Xiao \cite{LuanX12},  respectively. Since this will increase the size of the
paper and most of the deviation is of technical nature, we do not carry it out here.

Our conditions are on the asymptotic behavior of the L\'evy measure $\nu$.
\begin{itemize}
\item[(C1)] There exist constants $\un\al$ and $\ov\al$ such that
\begin{align*}
1<\un\al= \liminf_{x\to 0} \frac{x/\theta(x)}{\nu(z: |z|\ge x)} \le  \limsup_{x\to 0} \frac{x/\theta(x)}{\nu(z: |z|\ge x)}=\ov\al <2.
\end{align*}
\item[(C2)] There exist constants $\un\be$ and $\ov\be$ such that
\begin{align*}
1<\un\be= \liminf_{x\to \infty} \frac{x/\theta(x)}{\nu(z: |z|\ge x)} \le  \limsup_{x\to \infty} \frac{x/\theta(x)}{\nu(z: |z|\ge x)}=\ov\be <2.
\end{align*}
\end{itemize}

\begin{remark} \label{Re:C}
{\rm Some comments about Conditions (C1) and (C2) are in order. }
\begin{itemize}
\item Berman \cite{Berman87} used (C2) type conditions to prove
local nondeterminism property for a centered Gaussian process
$Y = \{Y(t), t \in \RR\}$ with stationary increments whose variance
of increment $\EE\big[(Y(t + \lambda) - Y(t))^2\big]$ can be
represented (up to a constant) by (\ref{eq:LK_formula}) with $A = 0$.
Xiao \cite{Xiao07} extended Berman's argument to the random field
setting and proved that a similar condition implies strong local
{nondeterminism} for Gaussian random fields. Here we show that
Conditions (C1) and (C2) imply asymptotic properties of the characteristic
exponent $\psi(\la)$ of $X$ as $\lambda \to \infty$ and $\lambda \to 0$;
see Lemma \ref{lem:on_conditions} and \ref{lem:on_conditions_3} below.

\item It follows from Lemma \ref{lem:on_conditions2} that, under Condition
(C1), the function $\sigma_0^2(x)$ decays at a power rate as $x \to 0$. Then,
by applying the criterion of Barlow and Hawkes \cite{BarlowHawkes85} or
Marcus and Rosen \cite{MarcusRosen92}, one can show that $X$ has a jointly
continuous local time.

\item It follows from Condition (C2)  and Lemma \ref{lem:on_conditions_3}
that  $\int_0^1 \frac{d\lambda} {\psi(\lambda)} = \infty$.
Hence $X$ is recurrent thanks to the Spitzer-type criterion (see \cite[Thm 16.2]{PortStone71AIF} or \cite[p.33]{Bertoin96book}).
By \cite[p.140]{MarcusRosen06}, the $0$-potential density of $X$  satisfies
$$u(0,0)=\int_0^\infty \frac{\rd \la}{\psi(\la)}=\infty,$$
and $\PP^x(T_0<\infty)=1$ for all $x\in\RR$, where $T_0 = \inf\{t> 0: X_t = 0\}$
is the first hitting time of zero by $X$.
\end{itemize}
\end{remark}

\mk
{Let us explain the novelty of our techniques. } The function $\sigma_0^2(x)$
in (\ref{F:sigma}) plays an important role in characterizing the joint continuity and
other analytic properties of the  local times of $X$.  Set the tail function 
\begin{align*}
\phi(x)&=2\int_{1/x}^\infty {\rd \la\over \psi(\la)}, \quad x\ge 0.
\end{align*}
It is clear that  $\phi(x)$ is nondecreasing in $x$. It follows from Lemma 2.3
and Theorem 2.5 in \cite{Xiao07} that, under Condition (C1), the functions
$\phi(x)$ and  $\sigma_0^2(x)$ are comparable as $x \to 0$.  In Section 2,
we establish further connections between the asymptotic  behaviors of
$\sigma_0^2(x)$ with those of $\psi(\la)$.  These allow us to remove the
monotonicity assumption (ii) on $\sigma_0^2(x)$ in Marcus \cite{Marcus01}.
{Following \cite{BassEisenbaumShi00}, through the generalized Ray-Knight
Theorem we transfer some important estimates for the local times to certain
estimates of the associated Gaussian processes. In \cite{BassEisenbaumShi00,Marcus01},
the Gaussian estimates were obtained by Slepian's Lemma. Here we obtain these
estimates by applying the Cameron-Martin change of measure formula, together
with a uniform upper bound for the maximum location of Gaussian processes with
stationary increments.  }

The following are the main results of this paper. Part (ii) of Theorem \ref{theo}
is an extension of Theorem 1.1 of Marcus \cite{Marcus01} and Theorem \ref{Th:Polar}
is an extension of Theorem 5.2 of Eisenbaum and Khoshnevisan \cite{EKh02}.

\begin{theorem}\label{theo}
Let $X$ be a symmetric L\'evy process with characteristic exponent $\psi$ that
satisfies \eqref{Con:general}.
\begin{itemize}
\item[(i)] Assume that (C1) holds. For all $a>2(\ov\al-1)/(2-\ov\al)$,
\begin{align*}
\lim_{t\to 0}  \frac{ V_t}{\phi^{-1}\left( \frac{L^0_t}{(\log L^0_t)^a}\right)}
=\infty  \quad \PP^0 \mbox{-a.s.}
\end{align*}

\item[(ii)] Assume that (C2) holds. For all $a>2(\ov\be-1)/(2-\ov\be)$,
\begin{align*}
\lim_{t\to \infty}  \frac{ V_t}{\phi^{-1}\left( \frac{L^0_t}{(\log L^0_t)^a}\right)}
=\infty \quad \PP^0 \mbox{-a.s.}
\end{align*}

\end{itemize}
\end{theorem}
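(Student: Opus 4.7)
The strategy is to transfer statements about local times at the inverse local time to statements about an associated Gaussian process via a generalized Ray--Knight isomorphism, and then to estimate a Gaussian argmax tail by Molchan's change of measure, as advertised in the introduction. I will describe the plan for part (ii); part (i) proceeds by the same route with the roles of $t \to \infty$ and $t \to 0$ (and Conditions (C2) and (C1)) interchanged. Let $\tau(t) = \inf\{s > 0 : L^0_s > t\}$ and let $G = \{G(x), x \in \RR\}$ be the centered Gaussian process, pinned by $G(0) = 0$, whose covariance is the $0$-potential of $X$ killed at the first hitting time of zero (well defined by Remark \ref{Re:C}). By the Dynkin--Eisenbaum isomorphism there is a joint coupling under which
\begin{equation*}
L^x_{\tau(t)} + \tfrac12 G(x)^2 \stackrel{d}{=} \tfrac12 \bigl(G(x) + \sqrt{2t}\bigr)^2, \quad x \in \RR.
\end{equation*}
Because $\tfrac12 G(x)^2$ is of lower order than $t$ on any set where $|G(x)|$ stays bounded, for large $t$ the smallest $|x|$ among the argmaxes of $L^{\cdot}_{\tau(t)}$ is essentially the same as that of the Gaussian process $x \mapsto G(x)$ restricted to a large ball.

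With $r_n = \phi^{-1}\bigl(t_n / (\log t_n)^a\bigr)$ and the subsequence $t_n = e^n$, the first Borel--Cantelli lemma reduces matters to showing that for every fixed $K > 0$,
\begin{equation*}
\sum_n \PP\bigl(V_{\tau(t_n)} \leq K r_n \bigr) < \infty.
\end{equation*}
By the isomorphism above, up to negligible corrections the $n$-th term is controlled by the probability that the argmax of $G$ over a large ball lies in $[-Kr_n, Kr_n]$. This is where Molchan's idea enters: one changes measure by a Cameron--Martin shift $f = f_n$ supported in $\{|x| \asymp R_n\}$ with $R_n \gg r_n$, chosen so that under the tilted law the argmax of $G + f$ typically sits near that shell. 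The Radon--Nikodym cost is $\exp\bigl(\tfrac12 \|f\|_H^2\bigr)$, and the residual probability that the argmax of $G + f$ still falls inside $[-Kr_n, Kr_n]$ is bounded by a constant using a uniform-in-scale upper bound on the location of the maximum of a centered Gaussian process with stationary increments; this is applicable because the canonical metric of $G$ is comparable to $\sigma_0$, hence to $\phi^{1/2}$, by the lemmas of Section 2 under (C2). Optimizing $\|f\|_H^2$ yields summability precisely when $a > 2(\ov\be - 1)/(2 - \ov\be)$.

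To pass from the almost-sure bound along $\tau(t_n)$ to one for $V_t$ at all times $t$, I will use $L^0_{\tau(t_n)} = t_n$ and monotonicity of $\tau$ to sandwich $V_s$ for $s \in [\tau(t_n), \tau(t_{n+1}))$; the regular variation of $\phi$ coming from (C2) absorbs the discretization error, and sending $K \to \infty$ concludes part (ii). Part (i) is analogous with $t \to 0$ and uses the asymptotics of $\phi$ near zero provided by (C1). The main technical difficulty lies in the Gaussian argmax estimate: Slepian's lemma, used by Marcus \cite{Marcus01}, required monotonicity of $\sigma_0^2$, which we drop. Molchan's change of measure replaces that requirement by a uniform control on the location of the maximum of a stationary-increments Gaussian process, which itself has to be obtained through chaining/metric-entropy arguments calibrated to $\sigma_0$. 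Choosing $f_n$ so that $\|f_n\|_H^2$ grows only at the logarithmic rate required by Borel--Cantelli while still displacing the argmax out of the ball of radius $Kr_n$ is the delicate balance that produces the explicit thresholds $2(\ov\al - 1)/(2 - \ov\al)$ and $2(\ov\be - 1)/(2 - \ov\be)$.
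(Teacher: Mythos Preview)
Your proposal has the right ingredients (Ray--Knight, Molchan, a uniform argmax bound, Borel--Cantelli), but the central reduction step is not justified and, as stated, does not work. The isomorphism gives
\[
\Big\{L^x_{\tau(t)}+\tfrac12\eta(x)^2:\ x\in\RR\Big\}\ \stackrel{d}{=}\ \Big\{\tfrac12(\eta(x)+\sqrt{2t})^2:\ x\in\RR\Big\},
\]
with $\eta$ independent of $L$ on the left. Adding the independent $\tfrac12\eta^2$ can move the argmax, and since $\eta(x)^2$ grows with $|x|$ (its variance is $\sigma_0^2(x)\to\infty$), your claim that ``the argmax of $L^{\cdot}_{\tau(t)}$ is essentially that of $G$ restricted to a large ball'' is not a negligible correction; one cannot pass directly from $\PP(V_{\tau(t)}\le Kr_n)$ to a Gaussian argmax probability.

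The paper avoids this by a two-pronged argument rather than a direct argmax transfer. First (Lemma~\ref{lem:small_x}), Gaussian \emph{upper} tail bounds give
\[
\sup_{|x|\le h_a(t)}\bigl(L^x_{\tau(t)}-t\bigr)=o\bigl(t(\log 1/t)^{-\gamma}\bigr)\quad\text{for every }\gamma<\tfrac{a}{2}.
\]
Second (Lemma~\ref{lem:all_x}), a Gaussian \emph{lower} tail (small-ball) estimate for $\sup_{|x|\le\phi^{-1}(ht)}\eta(x)$ yields
\[
L^*_{\tau(t)}-t\ \ge\ c\,t(\log 1/t)^{-\gamma}\quad\text{for every }\gamma>\tfrac{\ov\al-1}{2-\ov\al},
\]
and it is this lower-tail estimate (Lemma~\ref{lem:lowertail}) that uses Molchan's change of measure together with the Samorodnitsky--Shen uniform density bound for the leftmost maximum location. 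Note that the Cameron--Martin shift in the paper is not ``supported on a shell $\{|x|\asymp R_n\}$'' as you describe; it is $\bar f_h=\sqrt{h}-f_h$ with $f_h$ a bump at the origin, so after the shift the event $\{\sup\eta<\sqrt{h}\}$ becomes $\{\eta<f_h\}$, which forces the argmax into the small support of $f_h$ and is then controlled by Lemma~\ref{lem:loc_max}. Combining the two lemmas gives $V_{\tau(t)}>h_a(t)$ whenever $a>2(\ov\al-1)/(2-\ov\al)$, after which the passage from $\tau(t)$ to real time is a substitution (not an interpolation over $t$).
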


{
\begin{remark}
Our conditions impose certain asymptotic behavior of the L\'evy measure,
which in turn implies certain asymptotic behavior of the characteristic
exponent and a ratio control for the tail function of the L\'evy measure,
see \eqref{eq:ratio_contr_1}-\eqref{eq:equiv_1} and \eqref{eq:ratio_contr_3}-\eqref{eq:equiv_3}
in Section 2.  In fact, from the proofs below we may see that Part (i) of
Theorem \ref{theo} (resp. Part (ii)) is valid as long as
\eqref{eq:ratio_contr_1}-\eqref{eq:equiv_1} (resp. \eqref{eq:ratio_contr_3}-\eqref{eq:equiv_3})
hold. Both sets of conditions are useful since L\'evy processes encountered
in the literature may be specified explicitly either by the L\'evy measure
or by the characteristic exponent. From this and Proposition 2.4 of
Choi \cite{choi1994}, it follows that Theorem \ref{theo} holds for any
symmetric semi-stable L\'evy process in $\RR$ with index $\alpha \in (1, 2]$.
In Section \ref{sec:example}, we also present an example where (C1) fails,
but \eqref{eq:ratio_contr_1}-\eqref{eq:equiv_1}
hold, thus Part (i) of Theorem \ref{theo} continues to hold.
\end{remark}}


\begin{theorem}\label{Th:Polar}
If Conditions \eqref{Con:general} and (C1) hold, 
then all singletons are polar for the set ${\cal V}$ of favorite points.
Namely, for every $x \in \RR$,
\[
\PP^x\big(\exists t > 0 \hbox{ such that }  x \in {\cal V}_t   \big) = 0.
\]
\end{theorem}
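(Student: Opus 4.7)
The strategy is to reduce to the case $x = 0$ via the spatial homogeneity of L\'evy processes and then verify that the hypotheses of the abstract polarity criterion in Theorem 1.3 and Proposition 1.4 of Eisenbaum and Khoshnevisan \cite{EKh02} are met under Condition (C1) with the help of Theorem \ref{theo}(i). For the reduction, note that under $\PP^x$ the shifted process $\{X_t - x\}_{t \ge 0}$ has the same law as $\{X_t\}_{t \ge 0}$ under $\PP^0$, and the relation $L^y_t(X) = L^{y-x}_t(X - x)$ moves the favorite set accordingly; hence it is enough to show
\[
\PP^0\big(\exists\, t > 0 : 0 \in \mathcal{V}_t\big) = 0.
\]

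\medskip

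To invoke \cite[Thm 1.3, Prop 1.4]{EKh02}, I would verify three inputs. First, the existence of a jointly continuous version of the local time: under (C1), Lemma \ref{lem:on_conditions2} gives polynomial decay of $\sigma_0^2(x)$ as $x \to 0$, so joint continuity holds by the sufficiency criterion of \cite{BarlowHawkes85} or \cite{MarcusRosen92}. Second, recurrence of $X$ together with $0$ being regular for itself: these follow from \eqref{Con:general} and Lemma \ref{lem:on_conditions_3} via the Spitzer-type criterion, as recorded in Remark \ref{Re:C}. Third, a quantitative lower bound on how fast the favorite point process escapes the origin as $t \to 0$; this is precisely Theorem \ref{theo}(i), which for any $a > 2(\ov\al - 1)/(2 - \ov\al)$ yields
\[
\lim_{t \to 0} \frac{V_t}{\phi^{-1}\!\big(L^0_t / (\log L^0_t)^a\big)} = \infty \quad \PP^0\text{-a.s.}
\]
In particular $V_t > 0$ for all sufficiently small $t > 0$, and the decay of $V_t$ is controlled quantitatively by $\phi^{-1}$.

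\medskip

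The main technical obstacle will be to translate the conclusion of Theorem \ref{theo}(i) into the precise hypothesis format required by \cite{EKh02}. Their criterion is phrased in terms of the growth of the local time near its supremum, or equivalently of the associated Gaussian process at its maximum, whereas Theorem \ref{theo}(i) is phrased in terms of the rate at which $V_t$ leaves the origin. The bridge between the two is the comparability $\phi(x) \asymp \sigma_0^2(x)$ as $x \to 0$ under (C1), established in Section 2 following \cite{Xiao07}, together with the Dynkin-type isomorphism linking $L$ to a Gaussian process with covariance governed by $\sigma_0^2$. Once this identification is made, \cite[Thm 1.3, Prop 1.4]{EKh02} applies and yields that $\{0\}$ is polar for $\mathcal{V}$ under $\PP^0$, which by the reduction of the first paragraph gives the statement for every $x \in \RR$ under $\PP^x$.
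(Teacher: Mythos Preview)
Your proposal misidentifies the hypothesis that actually has to be checked in \cite[Thm 1.3, Prop 1.4]{EKh02}. Their criterion is \emph{not} phrased in terms of the escape rate of $V_t$ near $t=0$, so Theorem \ref{theo}(i) plays no role here. What one must verify is a correlation-decay condition on the associated Gaussian process: there should exist a sequence $x_n\to 0$ with
\[
\lim_{k\to\infty}(\ln k)^{1/2}\sup_{|n-m|\ge k}\frac{u_{T_0}(x_m,x_n)}{\sqrt{u_{T_0}(x_m,x_m)\,u_{T_0}(x_n,x_n)}}=0,
\]
where $u_{T_0}$ is given by \eqref{Def:u}. Your ``third input'' and the proposed bridge via $\phi\asymp\sigma_0^2$ and the isomorphism theorem do not produce this; the small-time behavior of $V_t$ is logically independent of this covariance estimate.

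The paper's argument goes directly at the normalized covariance. From \eqref{Def:u} one gets, for $0<x<y$,
\[
\frac{u_{T_0}(x,y)}{\sqrt{u_{T_0}(x,x)\,u_{T_0}(y,y)}}\le \tfrac12\,\frac{\sigma_0(x)}{\sigma_0(y)}+\frac{|\sigma_0^2(y)-\sigma_0^2(x-y)|}{\sigma_0(x)\sigma_0(y)}.
\]
Under (C1), Remark \ref{rem:2.4} gives a power bound $\sigma_0(x)/\sigma_0(y)\le c\,(x/y)^{\alpha-\varepsilon}$, and the second term is controlled by the potential-theoretic inequality $|\sigma_0^2(y)-\sigma_0^2(y-x)|\le\sigma_0^2(x)$ (this is \cite[(7.232)]{MarcusRosen06}, using that $u_{T_0}\ge 0$). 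Together these yield $u_{T_0}(x,y)/\sqrt{u_{T_0}(x,x)u_{T_0}(y,y)}\le c\,(x/y)^{\alpha-\varepsilon}$, after which a geometric sequence $x_n$ verifies the display above exactly as in \cite{EKh02}. Two minor points: recurrence is part of the standing assumption \eqref{Con:general} and does not require Lemma \ref{lem:on_conditions_3} (which concerns (C2), not (C1)); and the reduction to $x=0$ is fine but already implicit in the Eisenbaum--Khoshnevisan framework.
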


The rest of the paper is organized as follows. Some preliminary results are
presented in Section \ref{sec:2}. We obtain upper and lower tail
estimates  for the maxima of Gaussian processes in Section \ref{sec:tail_estimates}.
Using these estimates, we prove Theorems \ref{theo} and \ref{Th:Polar} in
Section \ref{sec:proof_theo}.  Some examples are given in Section \ref{sec:example}.
Possible extensions are discussed in Section \ref{sec_ext}.

We end the Introduction with some notations. We use $\PP^x$ and $\EE^x$ to
denote the law and the  expectation of any L\'evy process with starting point
$x\in\RR$. For simplicity, we write $\PP$ and $\EE$ when $x=0$. In Sections 3
and 4, the law and the expectation of any auxiliary Gaussian process are
denoted by $\rP$ and $\rE$. We use $c,C$ to denote generic constants whose
value may change from line to line.

\section{Preliminaries}\label{sec:2}

\subsection{General facts}
Introduce the tail function of the L\'evy measure $\nu$:
\begin{align*}
\pi(x)&=2\int_{1/x}^\infty {\rd z\over \theta(z)}, \qquad \forall x > 0.
\end{align*}
The function $\pi(x)$  is important because, under our condition, it is equivalent to $\psi(\la)$ around infinity and has the
advantage of being a monotone function. More precisely, the following lemma which is a reminiscence of
\cite[Lem 2.3 and Thm 2.5]{Xiao07} gathers useful facts derived from (C1).

\begin{lemma}\label{lem:on_conditions}
 Assume that (C1) holds. For any $\ep\in(0, \min(\un\al-1, 2-\ov\al))$, there exists a  finite positive constant $K_0$ such that for all $y>x>K_0$,
 \begin{align}
 \left(x\over y \right)^{\ov\al +\ep} &\le  { \pi(x)\over \pi(y) } \le  \left( x\over y\right)^{\un\al-\ep}.\label{eq:ratio_contr_1}
 \end{align}
Moreover,
\begin{align}
0<\liminf_{\la\to \infty} \frac{\psi(\la)}{\pi(\la)} \le \limsup_{\la\to \infty} \frac{\psi(\la)}{\pi(\la)} <\infty.\label{eq:equiv_1}
\end{align}
\end{lemma}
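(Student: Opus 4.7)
The first step of the plan is to translate Condition (C1), which is a $\liminf$/$\limsup$ statement about the ratio $x\theta(x)^{-1}/\nu(z:|z|\ge x)$ at $x\to 0$, into a pointwise control on the logarithmic derivative $v\pi'(v)/\pi(v)$ at $v\to\infty$. Since $\pi(v)$ coincides (up to a harmless multiplicative constant) with $\nu(z:|z|\ge 1/v)$, direct differentiation yields
\begin{equation*}
\frac{v\,\pi'(v)}{\pi(v)} \;=\; c\cdot\frac{u/\theta(u)}{\nu(z:|z|\ge u)}\bigg|_{u=1/v},
\end{equation*}
so that (C1) gives $\un\al\le\liminf_{v\to\infty}v\pi'(v)/\pi(v)\le\limsup_{v\to\infty}v\pi'(v)/\pi(v)\le\ov\al$ (with $c$ absorbed into the definitions of $\un\al$ and $\ov\al$). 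For any $\ep\in(0,\min(\un\al-1,2-\ov\al))$, I then pick $K_0$ large enough that $\un\al-\ep\le v\pi'(v)/\pi(v)\le\ov\al+\ep$ for all $v>K_0$, and integrate this differential inequality on $[x,y]$ to obtain \eqref{eq:ratio_contr_1}.

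Next, for the upper half of \eqref{eq:equiv_1}, I split the L\'evy--Khintchine integral $\psi(\la)=\int_0^\infty(1-\cos(x\la))\,\nu(\rd x)$ at $x=1/\la$. Using $1-\cos\le 2$, the tail piece $\int_{1/\la}^\infty$ is at most $2\int_{1/\la}^\infty \rd x/\theta(x)=\pi(\la)$. For the head piece, I use $1-\cos(x\la)\le(x\la)^2/2$ and integrate by parts in terms of $\bar\nu(x):=\int_x^\infty \rd z/\theta(z)$ to reduce matters to bounding $\la^2\int_0^{1/\la}x\,\bar\nu(x)\,\rd x$; the boundary term $x^2\bar\nu(x)\big|_{x=0}$ vanishes since \eqref{eq:ratio_contr_1} implies $\bar\nu(x)=O(x^{-(\ov\al+\ep)})$ with $\ov\al+\ep<2$. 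The substitution $s=1/x$ converts the remaining integral into $\la^2\int_\la^\infty \pi(s)/s^3\,\rd s$, and another application of the upper bound in \eqref{eq:ratio_contr_1} (the exponent $\ov\al+\ep-3<-1$ ensures convergence) shows this is $O(\pi(\la))$, yielding $\psi(\la)\le C\pi(\la)$.

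For the lower half of \eqref{eq:equiv_1}, I restrict the integral to the band $x\in[\pi/(2\la),3\pi/(2\la)]$, on which $1-\cos(x\la)\ge 1$, giving $\psi(\la)\ge\bar\nu(\pi/(2\la))-\bar\nu(3\pi/(2\la))$. The upper ratio bound \eqref{eq:ratio_contr_1}, applied to the pair $(2\la/(3\pi),2\la/\pi)$, shows the subtractive term is at most $3^{-(\un\al-\ep)}<1$ times the first, while $\bar\nu(\pi/(2\la))\asymp\pi(\la)$ by another application of \eqref{eq:ratio_contr_1}; together these give $\psi(\la)\ge c\,\pi(\la)$. The main obstacle is making the first step rigorous under only a $\liminf$/$\limsup$ hypothesis (rather than genuine regular variation) and carefully tracking the absolute constant relating $\pi$ to the $\nu$-tail; once this is handled, the remainder is routine Karamata-type integration combined with elementary cosine inequalities.
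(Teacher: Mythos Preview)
Your argument is correct and complete; the worry you voice at the end is unfounded. For \eqref{eq:ratio_contr_1}, your computation via the logarithmic derivative of $\pi$ is exactly what underlies Lemma~2.3 of Xiao \cite{Xiao07}, which the paper cites without reproducing. The constant is in fact $c=1$: since $\pi(v)=2\int_{1/v}^\infty\rd z/\theta(z)$ and (with the paper's $\nu$ living on $(0,\infty)$) $\nu(z:|z|\ge u)=\int_u^\infty\rd z/\theta(z)$, one has $\pi'(v)=2/(v^2\theta(1/v))$ and hence $v\pi'(v)/\pi(v)=(u/\theta(u))/\nu(z:|z|\ge u)$ with $u=1/v$ on the nose. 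No constant needs to be absorbed.

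For \eqref{eq:equiv_1} your route differs from the paper's. The paper splits $\psi(\la)=I_1+I_2$ at $x=1/\la$ as you do, but then uses (C1) pointwise in the form $(\un\al-\ep)\,\pi(1/x)/x\le 1/\theta(x)\le(\ov\al+\ep)\,\pi(1/x)/x$, inserts this directly into the head integral $I_1$, applies \eqref{eq:ratio_contr_1} to $\pi(1/x)/\pi(\la)$, and after the substitution $t=\la x$ reduces everything to the elementary integral $\int_0^1(1-\cos t)\,t^{-\gamma-1}\rd t$; this single manoeuvre yields \emph{both} the upper and lower bounds on $I_1/\pi(\la)$, so in particular the paper extracts the lower bound from $I_1$ alone. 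You instead integrate by parts for the upper bound on the head (avoiding a second appeal to the raw hypothesis (C1) and using only the already-established \eqref{eq:ratio_contr_1}), and for the lower bound you use the band $x\la\in[\pi/2,3\pi/2]$ together with a ratio comparison $\bar\nu(3\pi/(2\la))\le 3^{-(\un\al-\ep)}\bar\nu(\pi/(2\la))$. Your approach is a bit more self-contained, since after Part~1 it relies only on \eqref{eq:ratio_contr_1}; the paper's is slightly more symmetric, handling both directions of the head estimate with a single substitution.
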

\begin{proof}\,
The proof which led to  \cite[Lem. 2.3]{Xiao07}  yields plainly \eqref{eq:ratio_contr_1} by considering $x,y$ around infinity.
Using similar arguments that led to \cite[Thm. 2.5]{Xiao07}, but for large $\la$ rather than for small ones, entails \eqref{eq:equiv_1}.
Let us prove \eqref{eq:equiv_1}  for the sake of completeness. By \eqref{eq:LK_formula}, we can write $\psi(\la)$ as
\begin{align*}
\psi(\la) = \int_0^{1/\la} (1-\cos(\la x))\frac{\rd x}{\theta(x)} + \int_{1/\la}^\infty (1-\cos(\la x))\frac{\rd x}{\theta(x)} := I_1+I_2.
\end{align*}
By (C1), there exists $0<r_0<1/K_0$ such that for any $x<r_0$,
\begin{align*}
\un\al -\ep \le  \frac{x/\theta(x)}{\pi(1/x)} \le \ov\al+\ep.
\end{align*}
So for $\la>K_0$,  it follows from \eqref{eq:ratio_contr_1} that
\begin{align*}
\frac{I_1}{\pi(\la)} &\ge (\un\al-\ep)\int_0^{1/\la} (1-\cos(\la x))\frac{\pi(1/x)}{\pi(\la)} \frac{\rd x}{x} \\
&\ge  (\un\al-\ep)\int_0^{1/\la} (1-\cos(\la x))\left(\frac{1}{x\la}\right)^{\un\al-\ep} \frac{\rd x}{x} \\
&= (\un\al-\ep)\int_0^1 (1-\cos x) \frac{\rd x}{x^{\un\al+1-\ep}} =c>0.
\end{align*}
This proves the left inequality in \eqref{eq:equiv_1}.

To prove the right inequality in \eqref{eq:equiv_1}, we control both $I_1$ and $I_2$.  Firstly, for $\la>K_0$,
we still use \eqref{eq:ratio_contr_1} and derive
\begin{align*}
\frac{I_1}{\pi(\la)} &\le (\ov\al+\ep)\int_0^{1/\la} (1-\cos(\la x))\frac{\pi(1/x)}{\pi(\la)} \frac{\rd x}{x} \\
&\le  (\ov\al+\ep)\int_0^{1/\la} (1-\cos(\la x))\left(\frac{1}{x\la}\right)^{\ov\al+\ep} \frac{\rd x}{x} \\
&= (\ov\al+\ep)\int_0^1 (1-\cos x) \frac{\rd x}{x^{\ov\al +1+\ep}} =c <\infty,
\end{align*}
where we have used the fact $\ov \al + \ep<2$.   Next, bounding from above the integrand by $1$,
we obtain $I_2/\pi(\la)\le 1$ for all $\la$.  Combining the two bounds completes the proof.  $\square$
\end{proof}


Next we show that the ratio control \eqref{eq:ratio_contr_1} and the equivalence \eqref{eq:equiv_1}  on $\psi$ as $\la \to \infty$
imply similar properties of  $\phi(x)$ and the maxima of $\sigma^2_0(x)$ as $x \to 0$. The tool is \cite[Prop. 1]{KaletaSztonyk15},
see also  \cite{Schilling98JTP}.

\begin{lemma}[{\cite[Prop. 1]{KaletaSztonyk15}}]\label{lem:KS} Let
\begin{align*}
H(x)=  \int 1\wedge \left( \la\over x\right)^2 \frac{\rd \la}{\psi(\la)}
\end{align*}
and $\wh\sigma^2_0(h)=\max_{|x|\le h}\sigma^2_0(x)$.  There exists a constant $c>0$ such that for all $x>0$,
\begin{align*}
c\int_0^{1/x} (\la x)^2 \frac{\rd\la}{\psi(\la)} \le\sigma_0^2(x) \le 2H(1/x), \\
c H(1/x)\le \wh\sigma_0^2(x) \le 2H(1/x).
\end{align*}
\end{lemma}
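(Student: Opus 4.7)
The plan is to prove the four inequalities by purely elementary calculations starting from the Fourier representation
$$\sigma_0^2(x)=\frac{2}{\pi}\int_0^\infty\bigl(1-\cos(\la x)\bigr)\frac{\rd\la}{\psi(\la)}.$$
Three of the four bounds are immediate; the only nontrivial input is an averaging trick used for the lower bound on $\wh\sigma_0^2$.

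First, the two upper bounds and the pointwise lower bound on $\sigma_0^2$. The upper bound $\sigma_0^2(x)\le 2H(1/x)$ follows by plugging $1-\cos u\le 2\min(1,u^2)$ inside the integral (this actually gives $\frac{4}{\pi}H(1/x)$, so $2H(1/x)$ is not tight). The pointwise lower bound $c\int_0^{1/x}(\la x)^2\,\rd\la/\psi(\la)\le\sigma_0^2(x)$ comes from restricting the range of integration to $\la\in(0,1/x)$, where $|\la x|\le 1$ and therefore $1-\cos(\la x)\ge (\la x)^2/4$; one may take $c=1/(2\pi)$. For the upper bound on $\wh\sigma_0^2$, observe that $a\mapsto H(a)$ is nonincreasing, hence $y\mapsto H(1/y)$ is nondecreasing on $(0,\infty)$; thus for every $|y|\le x$ we have $\sigma_0^2(y)\le 2H(1/|y|)\le 2H(1/x)$, and passing to the maximum gives $\wh\sigma_0^2(x)\le 2H(1/x)$.

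The real content is the matching lower bound $cH(1/x)\le\wh\sigma_0^2(x)$, which is where I expect the main (mild) difficulty to lie. A single evaluation $\sigma_0^2(x)$ only captures the frequencies $\la\lesssim 1/x$ and therefore misses the high-frequency contribution to $H(1/x)$, so one cannot avoid looking at the entire family $\{\sigma_0^2(y):|y|\le x\}$. The trick is to dominate the maximum by a uniform average on $[0,x]$: by Tonelli and the elementary identity $\frac{1}{x}\int_0^x\cos(\la y)\,\rd y=\sin(\la x)/(\la x)$,
$$\wh\sigma_0^2(x)\ge \frac{1}{x}\int_0^x\sigma_0^2(y)\,\rd y=\frac{2}{\pi}\int_0^\infty\left(1-\frac{\sin(\la x)}{\la x}\right)\frac{\rd\la}{\psi(\la)}.$$
The function $u\mapsto 1-\sin u/u$ is continuous and strictly positive on $(0,\infty)$, behaves like $u^2/6$ as $u\to 0^+$, and tends to $1$ as $u\to\infty$; consequently it is bounded below by a positive multiple of $\min(1,u^2)$ uniformly in $u>0$. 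Substituting $u=\la x$ immediately yields $\wh\sigma_0^2(x)\ge c\,H(1/x)$, completing the proof. Note that no use is made of Conditions (C1)--(C2), so the lemma is a purely analytic statement relating $\psi$, $\sigma_0^2$, and $H$.
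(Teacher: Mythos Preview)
Your proof is correct. Note, however, that the paper does not give its own proof of this lemma: it is stated as a citation of \cite[Prop.~1]{KaletaSztonyk15} and used as a black box. What you have supplied is a clean, self-contained elementary argument, and the averaging trick for the lower bound on $\wh\sigma_0^2$ (replacing the maximum by $\frac{1}{x}\int_0^x\sigma_0^2(y)\,\rd y$ and using $1-\frac{\sin u}{u}\gtrsim 1\wedge u^2$) is exactly the right idea. One cosmetic point: your upper bound gives $\frac{4}{\pi}H(1/x)$, which is indeed $\le 2H(1/x)$, so the statement as written is slightly wasteful but correct; you flag this yourself.
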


\begin{lemma}\label{lem:on_conditions2} Assume that (C1) holds.  There are positive finite constants $\al_1<\al_2$ such that for
any $0<\ep<\al_1$, there exists $r_0>0$ such that for $0<x<y<r_0$,
\begin{align}
 \left(x\over y \right)^{\al_2 +\ep} &\le  { \phi(x)\over \phi(y) } \le  \left( x\over y\right)^{\al_1-\ep}.\label{eq:ratio_contr_2}
 \end{align}
 Further,  one has
 \begin{align}
0<\liminf_{x\to 0} \frac{\wh\sigma^2_0(x)}{\phi(x)} \le \limsup_{x\to 0} \frac{\wh\sigma^2_0(x)}{\phi(x)} <\infty.\label{eq:equiv_2}
 \end{align}
\end{lemma}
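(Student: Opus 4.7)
The plan is two-fold: transfer the ratio control on $\pi$ from Lemma \ref{lem:on_conditions} to $\phi$, then identify $\wh\sigma_0^2$ with $\phi$ via Lemma \ref{lem:KS}.

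\textbf{Ratio control on $\phi$.} As a preliminary step I would show $\phi(x) \asymp 1/(x\pi(1/x))$ as $x \to 0^+$. Since $\psi \asymp \pi$ at infinity by Lemma \ref{lem:on_conditions}, it suffices to control $\int_{1/x}^\infty \rd\la/\pi(\la)$. Plugging in the two-sided power estimate from \eqref{eq:ratio_contr_1},
\begin{align*}
\pi(1/x)(\la x)^{\un\al - \ep_0} \le \pi(\la) \le \pi(1/x)(\la x)^{\ov\al + \ep_0} \quad \text{for } \la \ge 1/x > K_0,
\end{align*}
and substituting $u = \la x$ reduces each bound to $\int_1^\infty u^{-\rho}\,\rd u$ for $\rho \in \{\un\al - \ep_0,\ \ov\al + \ep_0\} \subset (1,2)$, pinching $\phi(x)$ between two explicit multiples of $1/(x\pi(1/x))$. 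To upgrade this to the sharp ratio control \eqref{eq:ratio_contr_2}, I would pass to the logarithmic derivative. Differentiating gives $\phi'(x) = 2/(x^2\psi(1/x))$ and hence $x\phi'(x)/\phi(x) = 2/(x\psi(1/x)\phi(x))$. Combined with $\psi(1/x) \asymp \pi(1/x)$ and the intermediate asymptotic, one obtains $x\psi(1/x)\phi(x) \asymp 1$, so $x\phi'(x)/\phi(x)$ is trapped in a fixed positive interval $[A,B]$ with endpoints depending only on $\un\al, \ov\al, \ep_0$ and the $\psi/\pi$-comparison constants. Integrating the log-derivative from $x$ to $y$ yields $(y/x)^A \le \phi(y)/\phi(x) \le (y/x)^B$, equivalently $(x/y)^B \le \phi(x)/\phi(y) \le (x/y)^A$. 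Letting $A_0, B_0$ denote the $\ep_0 \to 0$ limits, choosing fixed $\al_1, \al_2$ with $0 < \al_1 \le A_0 \le B_0 \le \al_2$ and $\al_1 < \al_2$, and, for each prescribed $\ep \in (0,\al_1)$, taking $\ep_0$ and $r_0$ small enough to align the intervals, produces \eqref{eq:ratio_contr_2}.

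\textbf{Equivalence \eqref{eq:equiv_2}.} By Lemma \ref{lem:KS}, $\wh\sigma_0^2(x) \asymp H(1/x)$, and I would decompose
\begin{align*}
H(1/x) = \int_0^{1/x}(\la x)^2 \frac{\rd\la}{\psi(\la)} + \int_{1/x}^\infty \frac{\rd\la}{\psi(\la)}.
\end{align*}
The second summand equals $\phi(x)/2$. For the first, I would split at $\la = K_0$: the piece over $[0, K_0]$ equals $x^2 \int_0^{K_0}\la^2 \rd\la/\psi(\la) = O(x^2)$, the inner integral being finite since $\la^2/\psi(\la)$ is bounded near $0$ for any pure-jump L\'evy process. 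This is $o(\phi(x))$, because the intermediate asymptotic combined with the upper bound $\pi(1/x) \le Cx^{-\ov\al - \ep_0}$ gives $\phi(x) \gtrsim x^{\ov\al - 1 + \ep_0}$, and $\ov\al < 2$ forces $3 - \ov\al - \ep_0 > 0$. The piece over $[K_0, 1/x]$, by the same power sandwich on $\pi$, $\psi \asymp \pi$, and the substitution $u = \la x$, is bounded by a constant multiple of $1/(x\pi(1/x)) \asymp \phi(x)$. Adding gives $H(1/x) \asymp \phi(x)$, hence \eqref{eq:equiv_2}.

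\textbf{Main obstacle.} The delicate point is the careful bookkeeping of multiplicative constants when deriving \eqref{eq:ratio_contr_2}. The logarithmic derivative trick is essential for absorbing these constants into the exponents, producing the clean ratio control without residual multiplicative factors that would otherwise require $y/x$ to be bounded away from $1$. The remaining computations are routine given \eqref{eq:ratio_contr_1}.
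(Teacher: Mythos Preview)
Your proposal is correct and follows essentially the same route as the paper. The only cosmetic difference is in part \eqref{eq:ratio_contr_2}: the paper invokes \cite[Lem.~2.3]{Xiao07} as a black box once the hypothesis $0<\liminf_{\lambda\to\infty}\frac{\lambda/\psi(\lambda)}{\phi(1/\lambda)}\le\limsup<\infty$ is verified, whereas you reprove that abstract lemma in situ via the log-derivative identity $x\phi'(x)/\phi(x)=2/(x\psi(1/x)\phi(x))$---but the hypothesis verification (your intermediate step $\phi(x)\asymp 1/(x\pi(1/x))$) and the entire treatment of \eqref{eq:equiv_2} through the decomposition of $H(1/x)$ match the paper's proof.
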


\begin{proof}\,
A direct application of \cite[Lem. 2.3]{Xiao07} yields \eqref{eq:ratio_contr_2} as long as we show
\begin{align}\label{eq_in_lem2.3}
0<\al_1=\liminf_{\la\to\infty} \frac{\la/\psi(\la)}{\phi(1/\la)}\le \limsup_{\la\to \infty}\frac{\la/\psi(\la)}{\phi(1/\la)} =\al_2<\infty.
\end{align}
Let us start with the left inequality in \eqref{eq_in_lem2.3}.  By Lemma \ref{lem:on_conditions}, there exist constants $0<c,\,
C<\infty$ such that for all $\la>C$,
\begin{align*}
 \frac{\la/\psi(\la)}{\phi(1/\la)}= \frac{\la}{2\int_{\la}^\infty \frac{\psi(\la)}{\psi(z)}\rd z} \ge \frac{c\la}{\int_{\la}^\infty \frac{\pi(\la)}{\pi(z)}\rd z}
 \ge \frac{c\la}{\int_{\la}^\infty (\la/z)^{\un\al-\ep}\rd z}:=c_0>0.
\end{align*}
Similarly, there exists $c'<\infty$ such that for $\la>C$,
\begin{align*}
 \frac{\la/\psi(\la)}{\phi(1/\la)}\le \frac{c'\la}{\int_{\la}^\infty \frac{\pi(\la)}{\pi(z)}\rd z} \le \frac{c'\la}{\int_{\la}^\infty (\la/z)^{\ov\al+\ep}\rd z}:=c'_0<\infty,
\end{align*}
which implies  the desired right inequality in \eqref{eq_in_lem2.3}.

\sk
Next we show \eqref{eq:equiv_2}.  Thanks to Lemma \ref{lem:KS}, it suffices to show  \eqref{eq:equiv_2} with
$\wh\sigma^2_0(x)$ replaced by $H(1/x)$.   Observe that
\begin{align}\label{eq:H}
H\left({1\over x}\right)= 2\int_{0}^{1/x} (\la x)^2 \frac{\rd \la}{\psi(\la)} + 2\int_{1/x}^\infty \frac{\rd \la}{\psi(\la)}.
\end{align}
Thus, the left inequality in \eqref{eq:equiv_2} holds trivially.  On the other hand,  for $x<1/C$,
\begin{align*}
\int_{0}^{1/x} (\la x)^2 \frac{\rd \la}{\psi(\la)}&= x^2 \left(\int_0^C \la^2 \frac{\rd \la}{\psi(\la)} + \int_C^{1/x} \la^2 \frac{\rd \la}{\psi(\la)} \right).
\end{align*}
The first integral  in the parenthesis is finite and the second integral goes to infinity as $x\to 0$ (\cite[Lem.4.2.2]{MarcusRosen06}). Therefore,
\begin{align*}
\int_{0}^{1/x} (\la x)^2 \frac{\rd \la}{\psi(\la)}\le cx^2\int_C^{1/x} \la^2 \frac{\rd \la}{\psi(\la)} =\frac{cx^2}{\psi(1/x)} \int_C^{1/x} \frac{\psi(1/x)}{\psi(\la)} \la^2\rd \la
\end{align*}
which by Lemma \ref{lem:on_conditions} is bounded from above by
\begin{align}\label{eq_in_lem2.3_2}
 \frac{cx^2}{\psi(1/x)} \int_C^{1/x} \frac{\pi(1/x)}{\pi(\la)}  \la^2\rd \la &\le  \frac{cx^2}{\psi(1/x)} \int_C^{1/x} \left( 1\over x\la\right)^{\ov\al+\ep} \la^2\rd \la \nonumber \\
 &\le \frac{c}{\psi(1/x)x}\int_0^1 \left( 1\over \la\right)^{\ov\al+\ep-2}\rd \la.
\end{align}
We can assemble the last integral in \eqref{eq_in_lem2.3_2}  into the constant $c$ since $\ov\al>1$ and $\ep$ is small.
To finish the proof of the right inequality in \eqref{eq:equiv_2}, it remains to show that
\begin{align}\label{eq3_lem2.3}
\frac{1}{x\psi(1/x)}\le c\int_{1/x}^\infty \frac{\rd \la}{\psi(\la)}.
\end{align}
for all $x >0$ small. Indeed,  combining \eqref{eq:H}, \eqref{eq_in_lem2.3_2} and \eqref{eq3_lem2.3}, together with Lemma \ref{lem:KS},
shows that $\wh\sigma^2_0(x)\le c\phi(x)$ for all $x>0$ sufficiently small, as desired.  By Lemma \ref{lem:on_conditions},
for all $0< x<1/C$,
\begin{align}\label{eq:p.7}
\int_{1/x}^\infty \frac{\psi(1/x)}{\psi(\la)}\rd \la \ge  c \int_{1/x}^\infty \left( 1\over \la x\right)^{\ov\al+\ep}\rd \la = \frac{c}{x} \int_1^\infty \frac{\rd\la}{\la^{\ov\al+\ep}}
\end{align}
from which \eqref{eq3_lem2.3} follows.   The proof is now complete.
$\square$
\end{proof}

\begin{remark}\label{rem:2.4} {\rm
In fact, the two terms in \eqref{eq:H} are of the same order under Condition (C1). To see this, it remains to prove that
there exists $c>0$ such that
$$x^2\int_C^{1/x}\la^2 \frac{\rd\la}{\psi(\la)}\ge c\int_{1/x}^\infty \frac{\rd\la}{\psi(\la)} =c \phi(x). $$
This is true since one can reverse inequalities in \eqref{eq_in_lem2.3_2} and \eqref{eq:p.7} if we replace all
$\ov\al+\ep$ by $\un\al-\ep$. Consequently, $\sigma_0^2(x)\asymp \phi(x)$ as $|x|\to 0$ by the first inequality in Lemma \ref{lem:KS}.
}
\end{remark}

Interchanging the roles of zero and infinity, one deduces similar facts under (C2).
\begin{lemma}\label{lem:on_conditions_3}
 Assume that (C2) holds. For any $\ep\in(0, \min(\un\be-1, 2-\ov\be))$, there exists a  finite positive constant $r_1$ such that for all $0<x<y<r_1$,
 \begin{align}
 \left(x\over y \right)^{\ov\be +\ep} &\le  { \pi(x)\over \pi(y) } \le  \left( x\over y\right)^{\un\be-\ep}.\label{eq:ratio_contr_3}
 \end{align}
Moreover,
\begin{align}
0<\liminf_{\la\to 0} \frac{\psi(\la)}{\pi(\la)} \le \limsup_{\la\to 0} \frac{\psi(\la)}{\pi(\la)} <\infty.\label{eq:equiv_3}
\end{align}
\end{lemma}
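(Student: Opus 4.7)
This lemma is the zero/infinity dual of Lemma \ref{lem:on_conditions}, so my plan is to run exactly the same argument with the roles of zero and infinity interchanged. For the ratio control \eqref{eq:ratio_contr_3}, I invoke the proof technique of \cite[Lem. 2.3]{Xiao07} near zero rather than at infinity. Setting $F(t) := \pi(1/t) = 2\int_t^\infty \rd z/\theta(z)$, one has $-t F'(t)/F(t) = (t/\theta(t))/\nu(|z|\ge t)$, so (C2) forces $\un\be - \ep \le -t F'(t)/F(t) \le \ov\be + \ep$ for $t$ sufficiently large. Integrating this logarithmic-derivative bound on an interval $[a,b]$ with $a$ large and substituting $x = 1/b$, $y = 1/a$ yields \eqref{eq:ratio_contr_3} for $0 < x < y < r_1$.

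For the comparison \eqref{eq:equiv_3}, I decompose, as in Lemma \ref{lem:on_conditions},
\begin{align*}
\psi(\la) = \int_0^{1/\la}(1-\cos(\la x))\frac{\rd x}{\theta(x)} + \int_{1/\la}^\infty(1-\cos(\la x))\frac{\rd x}{\theta(x)} =: I_1 + I_2,
\end{align*}
but study the limit $\la \to 0$, so that $1/\la \to \infty$. For the lower bound, use $\psi(\la) \ge I_2$: on $x \ge 1/\la$, (C2) gives $1/\theta(x) \ge (\un\be - \ep)\pi(1/x)/x$, and since $1/x < \la < r_1$, \eqref{eq:ratio_contr_3} yields $\pi(1/x)/\pi(\la) \ge (\la x)^{-(\ov\be+\ep)}$; the substitution $u = \la x$ then produces
\begin{align*}
\frac{I_2}{\pi(\la)} \ge (\un\be - \ep) \int_1^\infty (1-\cos u)\,u^{-(\ov\be+\ep)-1}\,\rd u > 0,
\end{align*}
the integral converging because $\ov\be + \ep < 2$. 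For the upper bound, $I_2 \le 2\int_{1/\la}^\infty \rd x/\theta(x) = \pi(\la)$ is immediate, while $I_1$ is split at $1/r_1$: on $[0, 1/r_1]$, the Taylor bound $1-\cos(\la x) \le (\la x)^2/2$ together with $\int_0^{1/r_1}x^2\,\rd x/\theta(x) < \infty$ (guaranteed by $\int(1\wedge x^2)\,\nu(\rd x) < \infty$) gives an $O(\la^2)$ contribution; on $[1/r_1, 1/\la]$, a symmetric use of (C2) and \eqref{eq:ratio_contr_3} followed by $u = \la x$ produces a bound of the form $(\ov\be+\ep)\int_0^1 (1-\cos u)\,u^{-(\ov\be+\ep)-1}\,\rd u$, which is finite because $1-\cos u$ vanishes quadratically at zero and $\ov\be+\ep<2$.

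The only genuinely delicate step in the bookkeeping is the upper bound on $I_1$: the integration range extends through scales $x \le 1/r_1$ at which (C2) carries no information. Here one must exploit the quantitative power lower bound $\pi(\la) \ge c\,\la^{\ov\be+\ep}$ extracted from \eqref{eq:ratio_contr_3} (valid because $\ov\be+\ep<2$) to absorb the $O(\la^2)$ contribution coming from the small-$x$ part. Apart from this routine adjustment, the argument is a mirror image of the proof of Lemma \ref{lem:on_conditions}.
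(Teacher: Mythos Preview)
Your proof is correct and follows essentially the same approach as the paper, which simply cites \cite[Lem.~2.3 and Thm.~2.5]{Xiao07} directly (those results are stated precisely for this regime, so no adaptation is needed); you have spelled out the argument in full, correctly identifying that the dual of the paper's $I_1$-based lower bound is an $I_2$-based lower bound here, and correctly handling the extra splitting of $I_1$ at $1/r_1$ needed because (C2) gives no information for small $x$. One harmless slip: the integral $\int_1^\infty (1-\cos u)\,u^{-(\ov\be+\ep)-1}\,\rd u$ converges simply because $\ov\be+\ep>0$, not because $\ov\be+\ep<2$; the latter condition is what you actually need (and correctly use) for the finiteness of $\int_0^1 (1-\cos u)\,u^{-(\ov\be+\ep)-1}\,\rd u$ in the upper bound on $I_1$.
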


\begin{lemma}\label{lem:on_conditions4} Assume that (C2) holds.  There are positive finite constants $\be_1<\be_2$ such
that for any $0<\ep<\be_1$, there exists $K_1>0$ such that for all $y>x>K_1$,
\begin{align}
 \left(x\over y \right)^{\be_2 +\ep} &\le  { \phi(x)\over \phi(y) } \le  \left( x\over y\right)^{\be_1-\ep}.\label{eq:ratio_contr_4}
 \end{align}
 Further,  one has
 \begin{align}
0<\liminf_{x\to \infty} \frac{\wh\sigma_0^2(x)}{\phi(x)} \le \limsup_{x\to \infty} \frac{\wh\sigma^2_0(x)}{\phi(x)} <\infty.\label{eq:equiv_4}
 \end{align}
\end{lemma}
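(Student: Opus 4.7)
The plan is to mirror the proof of Lemma \ref{lem:on_conditions2}, systematically interchanging the roles of zero and infinity for both $\psi$ and $\phi$, and using Lemma \ref{lem:on_conditions_3} in place of Lemma \ref{lem:on_conditions}. Concretely, I would first establish the analogue of \eqref{eq_in_lem2.3}, namely
\begin{align*}
0<\be_1=\liminf_{\la\to 0}\frac{\la/\psi(\la)}{\phi(1/\la)}\le\limsup_{\la\to 0}\frac{\la/\psi(\la)}{\phi(1/\la)}=\be_2<\infty,
\end{align*}
and then invoke \cite[Lem. 2.3]{Xiao07} in its version at infinity to deduce \eqref{eq:ratio_contr_4}.

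To obtain this bilateral estimate, I would rewrite $\phi(1/\la)=2\int_\la^\infty dz/\psi(z)$ and split the integral at the threshold $r_1$ coming from Lemma \ref{lem:on_conditions_3}. The tail contribution $\psi(\la)\int_{r_1}^\infty dz/\psi(z)$ is of the same order as $\psi(\la)\asymp \pi(\la)$, which by \eqref{eq:ratio_contr_3} is $o(\la)$ as $\la\to 0$ since $\un\be>1$, and is therefore negligible. The main piece $\int_\la^{r_1}\psi(\la)/\psi(z)\,dz$ can be sandwiched, using $\psi\asymp\pi$ and \eqref{eq:ratio_contr_3}, between integrals of $(\la/z)^{\ov\be+\ep}$ and $(\la/z)^{\un\be-\ep}$. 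Since $1<\un\be-\ep<\ov\be+\ep<2$, both integrals are dominated by the contribution from the lower endpoint $\la$ and evaluate to multiples of $\la$, which yields the two-sided bounds $\be_1>0$ and $\be_2<\infty$.

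For the second claim \eqref{eq:equiv_4}, Lemma \ref{lem:KS} reduces the task to proving $H(1/x)\asymp\phi(x)$ as $x\to\infty$. The decomposition
\begin{align*}
H(1/x)=2\int_0^{1/x}(\la x)^2\frac{d\la}{\psi(\la)}+\phi(x)
\end{align*}
immediately gives $\phi(x)\le H(1/x)$. For the reverse inequality, I would restrict to $x>1/r_1$ so that $[0,1/x]\subset[0,r_1]$, where $\psi\asymp\pi$ and \eqref{eq:ratio_contr_3} apply. A calculation analogous to \eqref{eq_in_lem2.3_2}, exploiting $\ov\be+\ep<2$, gives
\begin{align*}
\int_0^{1/x}(\la x)^2\frac{d\la}{\psi(\la)}\le \frac{c}{x\pi(1/x)},
\end{align*}
while an argument mirroring \eqref{eq:p.7}, now on the interval $[1/x,r_1]$, yields $\phi(x)\ge c/(x\pi(1/x))$ for $x$ large. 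Chaining these bounds completes the proof.

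The main technical obstacle, compared with Lemma \ref{lem:on_conditions2}, is that the ratio control \eqref{eq:ratio_contr_3} is valid only for arguments \emph{below} $r_1$, so one must verify that the boundary contributions near $r_1$ and the tail contributions from $[r_1,\infty)$ do not spoil the estimates. This is handled by using $\psi(\la)\to 0$ as $\la\to 0$ in the first step, and by taking $x$ so large that $1/x<r_1$ in the second step; all remaining boundary terms can then be absorbed into multiplicative constants.
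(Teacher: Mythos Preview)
Your proposal is correct and follows essentially the same approach as the paper: establish the analogue of \eqref{eq_in_lem2.3} at $\la\to 0$ via Lemma \ref{lem:on_conditions_3} and then invoke \cite[Lem.~2.3]{Xiao07}, and for \eqref{eq:equiv_4} reduce via Lemma \ref{lem:KS} to comparing $H(1/x)$ with $\phi(x)$, bounding the quadratic piece by $c/(x\psi(1/x))$ and then showing $\phi(x)\ge c/(x\psi(1/x))$ by restricting to $[1/x,r_1]$. The only cosmetic differences are that the paper writes its bounds in terms of $\psi(1/x)$ rather than $\pi(1/x)$ (equivalent since $\psi\asymp\pi$ near $0$), and handles the quadratic integral by a direct change of variables rather than by literally mirroring the splitting in \eqref{eq_in_lem2.3_2}, which is unnecessary here since $[0,1/x]\subset[0,r_1]$.
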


\noindent{\it Proof of Lemmas \ref{lem:on_conditions_3} and  \ref{lem:on_conditions4}}\,
The ratio control \eqref{eq:ratio_contr_3} is exactly \cite[Lem. 2.3]{Xiao07}, and the equivalence \eqref{eq:equiv_3}
is \cite[Thm. 2.5]{Xiao07}.
To prove \eqref{eq:ratio_contr_4}, it suffices to show that
\begin{align*}
0<\be_1=\liminf_{\la\to 0} \frac{\la/\psi(\la)}{\phi(1/\la)}\le \limsup_{\la\to 0}\frac{\la/\psi(\la)}{\phi(1/\la)} =\be_2<\infty.
\end{align*}
This can be handled in exactly the same way as \eqref{eq_in_lem2.3} is proved, with an application of Lemma
\ref{lem:on_conditions_3} instead of Lemma \ref{lem:on_conditions}. Let us prove \eqref{eq:equiv_4}. Since there
is no obvious way to control the value of $\be_2$, one cannot apply directly Lemma \ref{lem:on_conditions} to derive
\eqref{eq:equiv_4}. But Lemma \ref{lem:KS} which holds for all $x>0$ is still applicable. Observe that the left
inequality of \eqref{eq:equiv_4} follows from \eqref{eq:H}. On the other hand,   for $x>1/r_1$,
\begin{align*}
\int_{0}^{1/x} (\la x)^2 \frac{\rd \la}{\psi(\la)} &= \frac{1}{x\psi(1/x)}\int_0^1\la^2 \frac{\psi(1/x)}{\psi(\la/x)}\rd \la \\
&\le \frac{c}{x\psi(1/x)}\int_0^1\la^2\left(1\over \la\right)^{\ov\be+\ep}\rd\la = \frac{c}{x\psi(1/x)},
\end{align*}
where we have used Lemma \ref{lem:on_conditions_3} in the second inequality and  $\ov\be<2$. To finish the proof,
it remains to show \eqref{eq3_lem2.3} for all large $x$. For $x>2/r_1$, write
\begin{align*}
\int_{1/x}^\infty \frac{\rd \la}{\psi(\la)} = \int_{1/x}^{r_1}\frac{\rd \la}{\psi(\la)} + \int_{r_1}^\infty \frac{\rd \la}{\psi(\la)}:=J_1+J_2.
\end{align*}
Observe that $J_2<\infty$ by assumption (which is the necessary and sufficient condition for the existence of local times).
Using again Lemma \ref{lem:on_conditions_3}, one has for $x>2/r_1$
\begin{align*}
J_1 &= \frac{1}{\psi(1/x)} \int_{1/x}^{r_1} \frac{\psi(1/x)}{\psi(\la)}\rd\la  \\
 &\ge  \frac{c}{\psi(1/x)}\int_{1/x}^{r_1} \left( 1\over x\la\right)^{\ov\be+\ep}\rd\la \ge \frac{c}{x\psi(1/x)}
 \int_1^{2}\left( 1\over \la\right)^{\ov\be+\ep}\rd\la =  \frac{c}{x\psi(1/x)},
\end{align*}
as desired. This finishes the proof\maketitle.
$\square$

\subsection{An isomorphism theorem}

We recall a generalized second Ray-Knight Theorem 
due to Eisenbaum {\it et al.} \cite{eisenbaum2000ray};
see also Marcus and Rosen \cite{MarcusRosen06}.
Let $X= \pro X$ be a strongly symmetric Borel right process with values in
$\RR$ with continuous $\al$-potential densities $u^\al(x,y)$. Then the local
times $\bra{L^x_t, t\ge 0, x\in \RR}$ exist and satisfy
\begin{equation}
\EE^x\left[\int_0^{\infty} e^{-\al t}\rd L^y_t\right] = u^\al(x,y).
\end{equation}
Denote $u_{T_0}(x,y) = \EE^x[L^y_{T_0}]$ which is the $0$-potential density of $X$ killed
at the first time it hits zero. Set $\tau(t)=\inf\{s\ge 0: L^0_s> t\}$.
We state the generalized second Ray-Knight Theorem in the recurrent case from
\cite[Thm 1.1]{eisenbaum2000ray} or \cite[Thm 8.2.2]{MarcusRosen06}.

\begin{theorem}\label{RK}
Assume that the 0-potential density of $X$ satisfies $u(0,0)=\infty$ and $\PP^x(T_0<\infty)>0$
for all $x$.
Let $\eta= \{\eta_x, x \in \RR\}$ be a mean-zero Gaussian process on a probability space $(\Omega_\eta, {\cal F}_\eta, \rP_\eta)$
with covariance function $u_{T_0}(x,y)$. Then for any $t>0$ and any countable subset $D \subset \RR$, under
$\PP\times \rP_\eta$, in law
\begin{equation}
\bra{L^x_{\tau(t)}+{\eta_x^2 \over 2}; x\in D}=\bra{{1\over 2}{(\eta_x+\sqrt{2t})^2}; x\in D}.
\end{equation}
\end{theorem}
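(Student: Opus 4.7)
The plan is to prove the claimed equality in law by matching finite-dimensional Laplace transforms. Since both sides are processes indexed by the countable set $D$, it suffices to fix an arbitrary finite subset $\{x_1,\ldots,x_n\}\subset D$ and check agreement of joint Laplace transforms at $\alpha=(\alpha_1,\ldots,\alpha_n)\in[0,\infty)^n$. Writing
\begin{align*}
\mathrm{LHS}(\alpha)&=\EE\otimes\rE_\eta\Big[\exp\Big(-\sum_{i=1}^n\alpha_i\big(L^{x_i}_{\tau(t)}+\tfrac12\eta_{x_i}^2\big)\Big)\Big],\\
\mathrm{RHS}(\alpha)&=\rE_\eta\Big[\exp\Big(-\tfrac12\sum_{i=1}^n\alpha_i(\eta_{x_i}+\sqrt{2t})^2\Big)\Big],
\end{align*}
uniqueness of Laplace transforms on $[0,\infty)^n$ for nonnegative random vectors reduces the problem to showing $\mathrm{LHS}(\alpha)=\mathrm{RHS}(\alpha)$ for every such $\alpha$.

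For $\mathrm{RHS}(\alpha)$, I would expand the quadratic in the exponent and complete the square under the Gaussian law. Letting $U=(u_{T_0}(x_i,x_j))_{i,j=1}^{n}$ denote the covariance matrix of $(\eta_{x_i})$ and $A=\mathrm{diag}(\alpha_i)$, a standard multivariate Gaussian computation (equivalently a Cameron--Martin shift) yields
\begin{equation*}
\mathrm{RHS}(\alpha)=\det(I+UA)^{-1/2}\,\exp\!\big(-t\,\mathbf{1}^{T}A(I+UA)^{-1}\mathbf{1}\big).
\end{equation*}
Integrating out $\eta$ first in $\mathrm{LHS}(\alpha)$ produces the same determinantal prefactor, so the whole identity collapses to
\begin{equation*}
\EE\Big[\exp\Big(-\sum_i\alpha_i L^{x_i}_{\tau(t)}\Big)\Big]=\exp\!\big(-t\,\mathbf{1}^{T}A(I+UA)^{-1}\mathbf{1}\big).
\end{equation*}

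To prove this last identity, I would use that $t\mapsto\tau(t)$ is a subordinator under $\PP^0$ by the strong Markov property applied at the stopping times $\tau(s)$, so the left-hand side must be of the form $\exp(-t\Phi(\alpha))$. Via It\^o's excursion measure $n$ of $X$ away from $0$, the exponent is
\begin{equation*}
\Phi(\alpha)=n\!\left(1-\exp\!\Big(-\sum_i\alpha_i L^{x_i}_\sigma\Big)\right),
\end{equation*}
where $\sigma$ is the excursion lifetime. Introducing the auxiliary function $h(x)=\EE^x[\exp(-\sum_i\alpha_i L^{x_i}_{T_0})]$, a Kac--Feynman argument relative to the killed resolvent $u_{T_0}$ (treating $\sum_j\alpha_j\,dL^{x_j}_s$ as the killing rate) gives
\begin{equation*}
h(x)=1-\sum_j\alpha_j u_{T_0}(x,x_j)h(x_j),
\end{equation*}
whose restriction to $x\in\{x_1,\ldots,x_n\}$ is the linear system $(I+UA)h=\mathbf{1}$, hence $h=(I+UA)^{-1}\mathbf{1}$. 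A short excursion computation using the entrance law of $n$ and the symmetry of $u_{T_0}$ then identifies $\Phi(\alpha)=\sum_i\alpha_i h(x_i)=\mathbf{1}^{T}A(I+UA)^{-1}\mathbf{1}$, as required.

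The main obstacle is this excursion-theoretic identification of $\Phi(\alpha)$. The measure $n$ is only $\sigma$-finite, and the manipulation requires care with the entrance law at $0$ together with symmetry to pass from ``leaving $0$'' to ``returning to $0$''; this is essentially the content of \cite[Thm.~1.1]{eisenbaum2000ray} and \cite[Ch.~8]{MarcusRosen06}. Once it is in place, uniqueness of Laplace transforms on $[0,\infty)^n$ closes the proof, and the extension from finite to countable $D$ follows by compatibility of finite-dimensional laws.
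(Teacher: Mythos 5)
This theorem is not proved in the paper at all: it is quoted verbatim from Eisenbaum \emph{et al.} \cite[Thm 1.1]{eisenbaum2000ray} and Marcus--Rosen \cite[Thm 8.2.2]{MarcusRosen06}, so there is no in-paper argument to compare against. Your outline is, in substance, a correct reconstruction of the proof given in those references: the reduction to finite-dimensional Laplace transforms, the Gaussian computation giving $\det(I+UA)^{-1/2}\exp(-t\,\mathbf 1^{T}A(I+UA)^{-1}\mathbf 1)$, the cancellation of the determinantal prefactor after integrating out $\eta$ on the left, the observation that $(L^{x_i}_{\tau(t)})_i$ is a vector-valued subordinator in $t$, and the Kac-type identity $(I+UA)h=\mathbf 1$ for $h(x)=\EE^x[\exp(-\sum_i\alpha_i L^{x_i}_{T_0})]$ are all correct as stated. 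The one step you rightly single out as the ``main obstacle'' is indeed where all the content sits: identifying $\Phi(\alpha)=\sum_i\alpha_i h(x_i)$ requires the Markov property under the excursion measure $n$ \emph{and} the normalization identity $n(L^{y}_\sigma)=1$ for every $y$, which is exactly where the occupation-density normalization of the local times, the entrance law of $n$, and the symmetry of $X$ enter; without proving that identity the argument is circular (its first-moment consequence $\EE^0[L^y_{\tau(t)}]=t$ is itself the first-moment case of the theorem). Two minor points worth recording: if $0\in D$ the additive functional $\sum_i\alpha_i L^{x_i}$ acquires a drift term under the subordinator decomposition, but this is harmless since $u_{T_0}(0,\cdot)\equiv 0$ forces $\eta_0=0$ and $L^0_{\tau(t)}=t$, so the identity is trivial at $x=0$; and the Gaussian computation should be justified for possibly degenerate $U$ (e.g.\ by a limiting argument), which is routine.
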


As we have mentioned earlier,  we will write the probability $\rP_\eta$ as $\rP$ and its expectation as $\rE$.

It follows from Theorem 6.1 of Eisenbaum {\it et al.} \cite{eisenbaum2000ray} that, for a recurrent symmetric
L\'evy process with characteristic exponent $\psi(\la)$, the associated Gaussian process $\eta$ is
centered with stationary increments such that its covariance  $u_{T_0}(x,y)$ is given by
\begin{align}\label{Def:u}
u_{T_0}(x,y) =\frac{1}{2} (\sigma_0^2(x) + \sigma^2_0(y) -\sigma^2_0(x-y)).
\end{align}
or equivalently it has variogram $ \rE[(\eta(x)-\eta(y))^2]=\sigma_0^2(x-y)$. Here, $\sigma_0^2(x)$ is defined in
\eqref{F:sigma}.

The following is the Cameron-Martin (change of measure) formula for Gaussian processes.

\begin{theorem}[{\cite[Thm 11.4.1]{MarcusRosen06}}]\label{CM}
Let $\{G_t; t\in S\}$ be a mean zero Gaussian process with continuous covariance $\Gamma(x,y)$ and let $H(\Gamma)$
be the reproducing kernel Hilbert space generated by $\Gamma$. Let $f\in H(\Gamma)$. Then for any measurable functional $F$,
\begin{align*}
\rE[F(G_.+f(\cdot))] = e^{-\|f\|^2/2}\rE[F(G_.)e^{G(f)}],
\end{align*}
where $\|\cdot\|$ is the norm on $H(\Gamma)$ and $G(f)$ is a Gaussian random variable with mean zero and variance $\|f\|^2$.
\end{theorem}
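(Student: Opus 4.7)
The plan is to reduce the identity to the standard finite-dimensional Cameron-Martin shift formula for Gaussian vectors and then extend it via the canonical isometry between $H(\Gamma)$ and the closed linear Gaussian subspace $\cal G \subset L^2(\Omega_\eta,\rP)$ generated by $\{G_t, t\in S\}$. First I would recall the construction of this isometry: on finite linear combinations of reproducing kernels $f=\sum_i a_i \Gamma(\cdot,t_i)$, set $G(f)=\sum_i a_i G_{t_i}$ and verify
\begin{equation*}
\rE\bkt{G(f)^2}=\sum_{i,j} a_i a_j \Gamma(t_i,t_j)=\|f\|^2,
\end{equation*}
so that $f\mapsto G(f)$ extends by density to a Hilbert-space isometry $H(\Gamma)\to\cal G$. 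Under this map $G(f)$ is automatically centered Gaussian with variance $\|f\|^2$, matching the statement.

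Second, I would prove the identity for cylindrical functionals $F(G_\cdot)=\wt F(G_{t_1},\ldots,G_{t_n})$. The key reduction is to split $f=g+h$ where $g$ is the $H(\Gamma)$-orthogonal projection of $f$ onto $\mathrm{span}\{\Gamma(\cdot,t_1),\ldots,\Gamma(\cdot,t_n)\}$; the reproducing property gives $h(t_i)=\langle h,\Gamma(\cdot,t_i)\rangle=0$, and $G(h)$ is independent of the Gaussian vector $Y=(G_{t_1},\ldots,G_{t_n})^\top$. Writing $b=(g(t_1),\ldots,g(t_n))^\top=\Sigma_Y c$ for the covariance matrix $\Sigma_Y$ of $Y$, the classical finite-dimensional shift identity gives
\begin{equation*}
\rE\bkt{\wt F(Y+b)}=e^{-\tfrac{1}{2}c^\top\Sigma_Y c}\,\rE\bkt{\wt F(Y)\,e^{c^\top Y}},
\end{equation*}
and one identifies $c^\top Y=G(g)$, $c^\top\Sigma_Y c=\|g\|^2$. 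Combined with $\rE[e^{G(h)}]=e^{\|h\|^2/2}$ and the Pythagorean identity $\|f\|^2=\|g\|^2+\|h\|^2$, this reassembles into the advertised formula $\rE[F(G_\cdot+f(\cdot))]=e^{-\|f\|^2/2}\rE[F(G_\cdot)e^{G(f)}]$.

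Third, I would extend to arbitrary measurable functionals by a monotone class argument: bounded continuous cylindrical $F$ are handled by Step 2; the class of bounded measurable $F$ for which the identity holds is closed under bounded pointwise limits; and the cylinder sets generate the $\sigma$-algebra on the path space. Standard truncation then passes from bounded to general nonnegative measurable $F$.

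The main obstacle is the bookkeeping in Step 2, specifically the choice of the orthogonal decomposition $f=g+h$ relative to the cylinder coordinates and the verification that the finite-dimensional shift factor $e^{-\|g\|^2/2}$ and the independent-increment factor $\rE[e^{G(h)}]=e^{\|h\|^2/2}$ combine consistently to produce $e^{-\|f\|^2/2}$ together with the single exponential $e^{G(f)}$. Once this decomposition is set up correctly, every remaining ingredient — the isometry, the Gaussian shift formula, and the monotone class extension — is classical.
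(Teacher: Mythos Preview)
The paper does not prove this statement: Theorem~\ref{CM} is quoted from \cite[Thm~11.4.1]{MarcusRosen06} and used as a black box, so there is no ``paper's own proof'' to compare against. Your proposal is the standard argument --- build the isometry $H(\Gamma)\to\cal G$, verify the identity on cylinder functionals via the finite-dimensional Gaussian shift, and extend by monotone class --- and it is correct as sketched; the orthogonal splitting $f=g+h$ and the recombination $e^{-\|g\|^2/2}\cdot e^{\|h\|^2/2}\cdot e^{-\|h\|^2}= e^{-\|f\|^2/2}$ (after noting $e^{G(f)}=e^{G(g)}e^{G(h)}$ with $G(h)$ independent of the cylinder) work exactly as you describe.
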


For the  Gaussian process associated to a L\'evy process with characteristic exponent $\psi(\la)$ in Theorem \ref{RK}, the reproducing kernel
Hilbert space is endowed with the norm
\begin{align*}
\|f - f(0)\|^2 = \frac{1}{2\pi}\int_0^\infty |\psi(\la)| |\hat f(\la)|^2 \rd \la,
\end{align*}
where $\hat f$ is the Fourier transform of $f$, see \cite[Sec 11.7]{MarcusRosen06}.

\subsection{Location of the leftmost maximum}
The next lemma is adapted from Theorem 3.1(b) of Samorodnitsky and Shen \cite{SamorodnitskyShen13}, see also \cite{Shen16SPA}.

\begin{lemma}\label{lem:loc_max} Let $\eta$ be a continuous process with stationary increments. Define $\tau_{[a,b]}$ to
be the leftmost maximum location for $\eta$ on the interval $[a,b]$, that is
\begin{align*}
\tau_{[a,b]} = \inf\Big\{x\in[a,b]: \eta(x)=\max_{u\in[a,b]}\eta(u) \Big\}.
\end{align*}
Then the distribution function of $\tau_{[a,b]}$ satisfies
\begin{align*}
F_{[a,b]}(x) = F_{[0,b-a]}(x-a), \quad \forall\, x\in\RR.
\end{align*}
Further, the law of $\tau_{[0,T]}$ restricted to $(0,T)$ is absolutely continuous with respect to the
Lebesgue measure on $\RR$ and its density has the following universal upper bound
\begin{align*}
f_T(t) \le \max\Big\{\frac{1}{t}, \frac{1}{T-t} \Big\},  \quad  0<t<T.
\end{align*}
\end{lemma}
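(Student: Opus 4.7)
The plan is to prove Part 1 directly from stationarity of increments and to derive Part 2 via a sliding-window argument combined with a Fubini estimate.

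For Part 1, since $\eta$ has stationary increments, the process $\{\eta(a+x)-\eta(a)\}_{x\in[0,b-a]}$ has the same finite-dimensional distributions as $\{\eta(x)-\eta(0)\}_{x\in[0,b-a]}$. As the location of the leftmost maximum on an interval is insensitive to the addition of a constant to the path, $\tau_{[a,b]}-a$ is equal in distribution to $\tau_{[0,b-a]}$, which gives $F_{[a,b]}(x)=F_{[0,b-a]}(x-a)$.

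For Part 2, I would exploit the following pathwise observation: on the event $\{\tau_{[0,T]}\in(t,t+h]\}$ with $h>0$ small, the leftmost maximum persists under any truncation of the time interval that does not sweep past it. In particular, for every $s_1\in[0,t]$ and $s_2\in[0,T-t-h]$ one has $\tau_{[s_1,T-s_2]}=\tau_{[0,T]}\in(t,t+h]$ on this event. Integrating the indicator over $(s_1,s_2)\in[0,t]\times[0,T-t-h]$ and taking expectation,
\[
t(T-t-h)\,\PP\bigl(\tau_{[0,T]}\in(t,t+h]\bigr) \;\le\; \int_0^t\!\int_0^{T-t-h}\PP\bigl(\tau_{[s_1,T-s_2]}\in(t,t+h]\bigr)\,ds_2\,ds_1.
\]
Applying Part 1 to rewrite each $\PP(\tau_{[s_1,T-s_2]}\in\cdot)$ as a probability for $\tau_{[0,T-s_1-s_2]}$, and changing variables $(u,L)=(t-s_1,T-s_1-s_2)$, the right-hand side becomes an integral of $\PP(\tau_{[0,L]}\in(u,u+h])$ over a bounded region in $(u,L)$-space. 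The key Fubini step is that, for any fixed $L>0$,
\[
\int \PP\bigl(\tau_{[0,L]}\in(u,u+h]\bigr)\,du \;\le\; h,
\]
because for each path $\omega$ the set of $u$ satisfying $\tau_{[0,L]}(\omega)\in(u,u+h]$ is an interval of Lebesgue measure at most $h$. Assembling everything and dividing by $h$, one obtains after letting $h\downarrow 0$ a pointwise density bound; absolute continuity of the law of $\tau_{[0,T]}$ on $(0,T)$ is then immediate from the uniform density bound on compact subsets.

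I expect the main obstacle to be obtaining the sharp constant $\max\{1/t,1/(T-t)\}$ rather than the weaker $1/t+1/(T-t)$ that the naive two-sided sliding above produces. Sharpening the estimate should require a more careful choice of the sliding region so that only one half of the interval contributes: this can be arranged by restricting to a one-sided sliding and using the reflection symmetry $x\mapsto\eta(T-x)$, which also has stationary increments and exchanges the roles of $t$ and $T-t$, yielding the $1/t$ bound for $t\le T/2$ and the $1/(T-t)$ bound for $t\ge T/2$ separately.
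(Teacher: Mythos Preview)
The paper does not actually supply a proof of this lemma; it is quoted directly from Samorodnitsky and Shen \cite{SamorodnitskyShen13} (see also \cite{Shen16SPA}). Your sliding-window/Fubini strategy is precisely the mechanism behind their proof, and your Part~1 and the two-sided bound $f_T(t)\le 1/t+1/(T-t)$ are correct as written.

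There is, however, a gap in your sharpening step. The reflection $x\mapsto\eta(T-x)$ does have stationary increments, but the \emph{leftmost} maximum of the reflected process corresponds to the \emph{rightmost} maximum of $\eta$, not the leftmost. So applying the $1/t$ bound to the reflected process yields a $1/(T-t)$ bound for $\tau^R_{[0,T]}$, not for $\tau_{[0,T]}$; without a.s.\ uniqueness of the maximizer (which is not assumed here) these need not coincide.

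The fix stays within your own framework: instead of reflecting, run the one-sided slide with a window of a different length. On $\{\tau_{[0,T]}\in(t,t+h]\}$, for every $s\in[0,T-t-h]$ the interval $[s,s+t+h]\subset[0,T]$ still contains $\tau_{[0,T]}$, so $\tau_{[s,s+t+h]}=\tau_{[0,T]}$. Integrating over $s$ and using Part~1 to shift each interval to start at $0$ gives
\[
(T-t-h)\,\PP\bigl(\tau_{[0,T]}\in(t,t+h]\bigr)\;\le\;\int_0^{T-t-h}\PP\bigl(\tau_{[0,t+h]}\in(t-s,t-s+h]\bigr)\,ds\;\le\;h,
\]
by the same Fubini step you already used. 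Letting $h\downarrow 0$ yields $f_T(t)\le 1/(T-t)$ directly, and combining with your $1/t$ bound from the window of length $T-t$ gives the stated $\max\{1/t,1/(T-t)\}$.
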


\section{Gaussian tail estimates}\label{sec:tail_estimates}

In this section, we estimate the upper and lower tail probabilities for the associated Gaussian processes $\proo \eta$
in the second Ray-Knight Theorem. Recall that $\proo\eta$ is a centered Gaussian process with stationary increments satisfying
$$\rE[(\eta(x)-\eta(y))^2] = \sigma_0^2(x-y).$$

\sk
The first two lemmas give upper tail estimates for the maxima of $\eta$.
\begin{lemma}\label{lem: upper tail}
Assume that (C1) holds.
There exist finite positive constants $c_0$, $c_1$, $c_2$, $c_3$, $r_1$ such that for all $0<h<r_1$ and $u \ge c_3 \sigma_0(h)$,
\begin{align*}
\rE \bigg(\sup_{|x|\le h} |\eta(x)| \bigg)&\le c_0 \wh\sigma_0(h), \\
\rP \bigg(\sup_{|x|\le h}|\eta(x)|>u\bigg) &\le c_1\exp \bigg(- \frac{u^2}{c_2\wh\sigma_0^2(h)}\bigg).
\end{align*}
\end{lemma}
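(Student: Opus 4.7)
The plan is to prove both estimates by classical Gaussian techniques, with Condition (C1) entering only through the power-type control it provides for $\sigma_0$. By Lemma \ref{lem:on_conditions2} and Remark \ref{rem:2.4}, under (C1) one has $\sigma_0^2(x)\asymp\wh\sigma_0^2(x)\asymp\phi(x)$ as $x\to 0$, together with the ratio control \eqref{eq:ratio_contr_2}. In particular $\phi$ is doubling, and the canonical pseudo-metric $d(x,y):=\sigma_0(x-y)$ on $[-h,h]$ is Hölder-dominated by $|x-y|^{(\un\al-\ep)/2}$, while the $d$-diameter of $[-h,h]$ is of order $\wh\sigma_0(h)$.

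For the first bound I would appeal to Dudley's metric entropy inequality
\begin{equation*}
\rE\sup_{|x|\le h}|\eta(x)|\le c\int_0^{D(h)}\sqrt{\log N([-h,h],d,\epsilon)}\,d\epsilon.
\end{equation*}
Using monotonicity of $\phi$ and $\sigma_0^2\asymp\phi$, one has $d(x,y)\le\epsilon$ whenever $|x-y|\lesssim \phi^{-1}(c\epsilon^2)$, and the ratio control gives $\phi^{-1}(t)\gtrsim t^{1/(\al_1-\ep)}$, so $N([-h,h],d,\epsilon)\lesssim h/\phi^{-1}(c\epsilon^2)$. Rather than estimating the integral directly, I would carry out a dyadic chaining with levels $\epsilon_k=\wh\sigma_0(h 2^{-k})$; the doubling $\phi(h 2^{-k})\le C 2^{-k(\al_1-\ep)}\phi(h)$ forces $\epsilon_k\le C 2^{-k(\al_1-\ep)/2}\wh\sigma_0(h)$, while only $\lesssim 2^k$ centers are needed at level $k$. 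The chain sum $\sum_k \epsilon_k\sqrt{k}$ is then absolutely convergent because $\al_1-\ep>0$ and is bounded by a constant multiple of $\wh\sigma_0(h)$.

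For the tail bound I would invoke the Borell--TIS concentration inequality: with $M:=\rE\sup_{|x|\le h}|\eta(x)|$ and $\Sigma^2:=\sup_{|x|\le h}\rE[\eta(x)^2]=\wh\sigma_0^2(h)$ (using $\eta(0)=0$ a.s. and \eqref{Def:u}),
\begin{equation*}
\rP\Big(\sup_{|x|\le h}|\eta(x)|>M+t\Big)\le 2\exp\Big(-\frac{t^2}{2\Sigma^2}\Big),\qquad t>0.
\end{equation*}
Combining $M\le c_0\wh\sigma_0(h)$ from the first step with the comparability $\sigma_0(h)\asymp\wh\sigma_0(h)$, I would choose $c_3$ large enough relative to $c_0$ so that $u\ge c_3\sigma_0(h)$ forces $u\ge 2M$; then $u-M\ge u/2$ and the concentration bound yields the second estimate with, say, $c_1=2$ and $c_2=8$.

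The main obstacle is establishing the first bound with $c_0\wh\sigma_0(h)$ rather than the weaker $c_0\wh\sigma_0(h)\sqrt{\log(1/h)}$ that would come naively from Dudley's theorem. The power-type lower bound on $\phi$ (equivalently its doubling property) supplied by (C1) is exactly the input that makes the dyadic chaining telescope geometrically and kills the log factor; the Borell--TIS step is then routine.
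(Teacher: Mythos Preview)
Your proposal is correct and follows essentially the same route as the paper: an entropy/chaining bound for the expectation (the paper quotes \cite[Lem.~7.2.2]{MarcusRosen06}, which packages the same Dudley-type estimate, and then uses the ratio control \eqref{eq:ratio_contr_2} exactly as you do to kill the potential $\sqrt{\log(1/h)}$ factor), followed by Gaussian concentration for the tail (the paper uses the median-based version rather than Borell--TIS with the mean, a cosmetic difference). The only minor point is that the paper's concentration step is phrased via the median and \cite[Rem.~5.4.4, Cor.~5.4.5]{MarcusRosen06}, but your Borell--TIS argument is equivalent.
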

\begin{proof}
By Marcus and Rosen \cite[Lem 7.2.2]{MarcusRosen06}, for any increasing $\wt\sigma(x)$ satisfying $\wt\sigma(0)=0$ and $\sigma_0(x)\le \wt\sigma(x)$,
\begin{align}\label{eq:lem31-1}
\rE \bigg(\sup_{|x|\le h} |\eta(x)| \bigg)\le C\bigg(\wt\sigma(h) + \int_0^{1/2}\frac{\wt\sigma(hu)}{u\sqrt{\log 1/u}} \rd u\bigg).
\end{align}
Actually, \eqref{eq:lem31-1} holds with $\wt\sigma(\cdot)$ replaced by $\wh\sigma_0(\cdot)=\max_{|x|\le|\cdot|}\sigma_0(x)$
(see \cite[p.301]{MarcusRosen06}). Applying Lemma \ref{lem:on_conditions2} yields that
\begin{align}\label{eq:lem31-2}
\int_0^{1/2}\frac{\wh\sigma_0(hu)}{u\sqrt{\log 1/u}} \rd u \le  c\wh\sigma_0(h) \int_0^{1/2} \frac{u^{(\al_1-\ep)/2}}{u\sqrt{\log 1/u}} \rd u \le c\wh\sigma_0(h)
\end{align}
for all $h>0$ sufficiently small. Combining \eqref{eq:lem31-1} and \eqref{eq:lem31-2} yields the first moment estimate.

To derive the upper tail probability, consider $u>3c_0\wh\sigma_0(h)$ (we take $c_0 \ge 1/\sqrt{2 \pi}$) and let $a$ be
the median of $\sup_{|x|\le h} \eta(x)$ which satisfies
$$\Big|a- \rE\sup_{|x|\le h} \eta(x) \Big|\le \wh\sigma_0(h)/\sqrt{2\pi},$$ see \cite[Cor. 5.4.5]{MarcusRosen06}. Using the
concentration inequality (cf. \cite[Rem. 5.4.4]{MarcusRosen06}) in the third inequality below, one gets
\begin{align*}
\rP \bigg(\sup_{|x|\le \de} |\eta(x)| > u \bigg)  &\le  \rP\bigg(\sup_{|x|\le \de} |\eta(x)|  -a > u - c_0\wh\sigma_0(h) - \wh\sigma_0(h)/\sqrt{2\pi}\bigg) \\
&\le \rP\bigg( \Big|\sup_{|x|\le \de} |\eta(x)|  -a \Big|\ge u/3 \bigg) \le 3 \bigg(1- \Phi(\frac{u}{3\wh\sigma_0(\de)})\bigg)\\
&\le 3\exp\bigg(-\frac{u^2}{18\wh\sigma_0^2(\de)} \bigg),
\end{align*}
where $\Phi$ is the distribution function of $N(0,1)$.  For smaller $u$, this inequality holds trivially with $3$
replaced by some generic constant.   $\square$
\end{proof}

\begin{lemma}\label{lem: upper tail large}
Assume that (C2) holds. There exists $K>0$ such that for all $h>K$, the conclusion in Lemma \ref{lem: upper tail}
holds with possibly different constants $c_0, c_1, c_2, c_3$.
\end{lemma}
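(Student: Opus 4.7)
The proof mirrors that of Lemma \ref{lem: upper tail}, with Lemma \ref{lem:on_conditions4} playing the role of Lemma \ref{lem:on_conditions2}. First I would apply the entropy inequality \eqref{eq:lem31-1} with $\wh\sigma_0$ in place of $\wt\sigma$, giving
$$
\rE\sup_{|x|\le h}|\eta(x)|\le C\bigg(\wh\sigma_0(h)+\int_0^{1/2}\frac{\wh\sigma_0(hu)}{u\sqrt{\log 1/u}}\,\rd u\bigg).
$$
The plan is to bound the integral by a constant multiple of $\wh\sigma_0(h)$ for $h$ sufficiently large, and then obtain the upper tail bound by the Gaussian concentration argument already used in Lemma \ref{lem: upper tail}.

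I would split the integral at $u_0=K_1/h$, where $K_1$ is the threshold from Lemma \ref{lem:on_conditions4}. On $[u_0,1/2]$ the argument $hu>K_1$ lies in the large-scale regime, so Lemma \ref{lem:on_conditions4} yields $\wh\sigma_0(hu)\le c\,u^{(\be_1-\ep)/2}\wh\sigma_0(h)$, whence
$$
\int_{u_0}^{1/2}\frac{\wh\sigma_0(hu)}{u\sqrt{\log 1/u}}\,\rd u\le c\wh\sigma_0(h)\int_0^{1/2}\frac{u^{(\be_1-\ep)/2-1}}{\sqrt{\log 1/u}}\,\rd u\le c\wh\sigma_0(h),
$$
exactly as in \eqref{eq:lem31-2}. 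On the complementary range $(0,u_0)$ the substitution $v=hu$ recasts the integral as $\int_0^{K_1}\wh\sigma_0(v)/(v\sqrt{\log(h/v)})\,\rd v$. Bounding $1/\sqrt{\log(h/v)}\le 1/\sqrt{\log(h/K_1)}$ uniformly on this range reduces matters to $\int_0^{K_1}\wh\sigma_0(v)/v\,\rd v$, a finite constant independent of $h$ coming from the local regularity of $\sigma_0^2$ near zero that underlies the continuity of $\eta$. Since Lemma \ref{lem:on_conditions4} also implies $\wh\sigma_0(h)\asymp\sqrt{\phi(h)}\to\infty$ polynomially in $h$, the prefactor $1/\sqrt{\log(h/K_1)}$ is $o(\wh\sigma_0(h))$, and this small-scale contribution is absorbed into $c\wh\sigma_0(h)$ for $h$ large.

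Putting the two pieces together yields the first-moment estimate $\rE\sup_{|x|\le h}|\eta(x)|\le c_0\wh\sigma_0(h)$. The tail bound then follows verbatim from the argument in Lemma \ref{lem: upper tail}: combine Borell's concentration inequality (valid since $\sup_{|x|\le h}\mathrm{Var}(\eta(x))=\wh\sigma_0^2(h)$) with the mean-median comparison $|a-\rE\sup|\le\wh\sigma_0(h)/\sqrt{2\pi}$ to obtain $\rP(\sup_{|x|\le h}|\eta(x)|>u)\le c_1\exp(-u^2/(c_2\wh\sigma_0^2(h)))$ for $u\ge c_3\wh\sigma_0(h)$. The principal obstacle is the small-scale piece of the entropy integral: Lemma \ref{lem:on_conditions4} only controls the ratio of $\wh\sigma_0$ for large arguments, so one must separately invoke the local integrability $\int_0^{K_1}\wh\sigma_0(v)/v\,\rd v<\infty$; the rest of the argument is a straightforward adaptation of the (C1) case.
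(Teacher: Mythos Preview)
Your overall strategy—establish the first-moment bound via an entropy inequality, split into large- and small-scale pieces, then run the concentration argument—is sound and close in spirit to the paper's. The large-scale piece $[K_1/h,1/2]$ and the concentration step are fine. The gap is in the small-scale piece.

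You claim $\int_0^{K_1}\wh\sigma_0(v)/v\,\rd v<\infty$ ``from the local regularity of $\sigma_0^2$ near zero that underlies the continuity of $\eta$.'' This does not follow. Continuity of $\eta$ is equivalent to the Fernique condition
\[
\int_0^{1/2}\frac{\wh\sigma_0(v)}{v\sqrt{\log 1/v}}\,\rd v<\infty,
\]
which is strictly weaker than what you assert: for instance $\wh\sigma_0(v)\asymp(\log 1/v)^{-p}$ with $p\in(1/2,1]$ near zero gives a continuous $\eta$ while your integral diverges. Since Lemma~\ref{lem: upper tail large} assumes only (C2), no power decay of $\wh\sigma_0$ at the origin is available, and this scenario is not excluded.

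The repair is immediate: on $(0,1/2)$ use $\log(h/v)\ge\log(1/v)$ to bound $\int_0^{1/2}\wh\sigma_0(v)/(v\sqrt{\log(h/v)})\,\rd v$ by the Fernique integral; on $[1/2,K_1]$ the integrand is bounded and the uniform bound $1/\sqrt{\log(h/K_1)}$ suffices. The paper avoids the issue altogether by working directly with Dudley's covering-number bound rather than \eqref{eq:lem31-1}: after writing $N([-h,h],d_\eta,u)\le(h/K_1)N([-K_1,K_1],d_\eta,u)$, the small-scale contribution is $\wh\sigma_0(K_1)\sqrt{\log(h/K_1)}$ plus the entropy integral over the \emph{fixed} interval $[-K_1,K_1]$, whose finiteness is precisely Fernique's theorem in its native form. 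This is a bit cleaner, but once patched your route is equally valid.
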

\begin{proof}\,
Let $K_1$ be the constant in Lemma \ref{lem:on_conditions4} and $h>K_1$. As was shown in the proof of
Lemma \ref{lem: upper tail},  the upper tail probability estimate follows once we establish the first moment estimate
for the maxima of $\proo \eta$. Following Marcus \cite{Marcus01}, we use Dudley's entropy bound
(\cite[Thm. 6.1.2]{MarcusRosen06})
\begin{align*}
\rE \bigg(\sup_{|x|\le h}{\eta(x)} \bigg)\le 16\sqrt{2}\int_0^{\wh\sigma_0(h)} \sqrt{\log N([-h,h], d_\eta, u)}\, \rd u,
\end{align*}
where $d_\eta(x,y)= \sigma_0(x-y)$ is the canonical metric of the Gaussian process $\{\eta(x); $ $x \in \R\}$ and $N([-h,h],d_\eta,u)$ is the
minimal number of balls with radius at most $u$ in the metric $d_\eta$ to cover the interval $[-h,h]$. Observe that
{by the stationarity of the increments,}
$d_\eta(x,y)= d_\eta(x-y,0)$,  thus $N([-h,h], d_\eta, u)\le (h/K_1) N([-K_1,K_1], d_\eta, u])$.
For any increasing function $\wt\sigma(x)$ that satisfies $\sigma_0(x)\le\wt\sigma(x)$, one has by the subadditivity of the square root function
\[
\begin{split}
&\int_0^{\wt\sigma(K_1)} \sqrt{\log N([-h,h], d_\eta, u)} \rd u \\
&\qquad \le  \wt\sigma(K_1)\sqrt{\log (h/K_1)} + \int_0^{\wt\sigma(K_1)} \sqrt{\log N([-1,1], d_\eta, u)} \rd u.
\end{split}
\]
That the last integral is finite follows from Fernique's Theorem \cite[Thm. 6.2.2]{MarcusRosen06}.
 Also, a $d_\eta$-ball of radius $u$ covers at least an interval of Euclidean length ${\wt\sigma}^{-1}(u)$ because
 $[0,\wt\sigma^{-1}(u)]=\bra{x\ge 0: \wt\sigma(x)\le u}\subset\bra{x\ge 0:\sigma_0(x)\le u}$. Hence,
\begin{multline}\label{eq: entropy large r}
\int_{\wt\sigma(K_1)}^{\wt\sigma(h)}  \sqrt{\log N([-h,h], d_\eta, u)} \rd u \le \int_{\wt\sigma(K_1)}^{\wt\sigma(h)}  \sqrt{\log \frac{2h}{{\wt\sigma}^{-1}(u)}} \rd u \\
= \int_{K_1}^h\sqrt{\log {2h \over s}} \rd\wt\sigma(s) \le \wt\sigma(h)(\log 2)^{1/2} + {1\over 2} \int_{K_1/(2h)}^{1/2} \frac{\wt\sigma(2hs)}{s\sqrt{\log(1/s)}}\rd s.
\end{multline}
Since $\sigma_0(x-y)=d_\eta(x,y)\le d_\eta(x, (x+y)/2) + d_\eta((x+y)/2, y) = 2\sigma_0((x-y)/2)\le 2\wh\sigma_0((x-y)/2)$,   one gets for all $x$,
$$
\wh\sigma_0(2x)\le 2\wh\sigma_0(x).
$$
Taking limits (through a sequence of finite measures $\rd \wt\sigma_n$ on $[1,h]$ that converges to $\rd\wh\sigma_0$),
one obtains \eqref{eq: entropy large r} with $\wt\sigma$ replaced by $\wh\sigma_0$.  To summarize, we have proved that
\begin{align*}
\rE\bigg(\sup_{|x|\le h}{\eta(x)}\bigg) &\le C\left(\sqrt{\log h} + \wh\sigma_0(h) +  \int_{K_1/(2h)}^{1/2} \frac{\wh\sigma_0(hs)}{s\sqrt{\log 1/s}} \rd s\right)\\
&:= C(A_1+A_2+A_3).
\end{align*}
It  follows from Lemma \ref{lem:on_conditions4} that $A_1<cA_2$. Similarly,
\begin{align*}
A_3 \le c\wh\sigma_0(h)\int_0^{1/2} \frac{s^{\be_1-\ep}}{s\sqrt{\log 1/s}}\rd s \le c\wh\sigma_0(h).
\end{align*}
This ends the proof. $\square$
\end{proof}

Next, we consider the lower tail probability for the maxima of $\proo \eta$. Lemma \ref{lem:lowertail}  is the key technical estimate of this paper.
As suggested by Marcus and Rosen \cite[p.527]{MarcusRosen06}, we explore Molchan's idea \cite{Molchan99,Molchan00}
without using scaling.
\begin{lemma}\label{lem:lowertail}
Assume that (C1) holds. For all $\ga< 1/(\ov\al-1)$, there exists a finite constant $K_2$ such that for any $h<1$ and $\de<1$,
\begin{align*}
\rP \Big(\eta(x)<\sqrt{h\wh\sigma_0^2(\de)}; \, \hbox{ for all } |x|\le \de\Big) \le K_2 h^{\ga}.
\end{align*}
\end{lemma}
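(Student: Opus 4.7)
The approach is to adapt the change-of-measure technique of Molchan \cite{Molchan99,Molchan00}, as suggested by Marcus and Rosen \cite[p.\,527]{MarcusRosen06}. Fix $\gamma<1/(\ov\al-1)$ and choose $\ep'>0$ small enough that $\gamma(\ov\al+\ep'-1)<1$. Set $u=\sqrt{h\wh\sigma_0^2(\de)}$ and let $A=\{\eta(x)<u\text{ for all }|x|\le\de\}$ be the event of interest. The two tools from Section \ref{sec:2} that drive the argument are the universal density bound on the location of the maximum (Lemma \ref{lem:loc_max}) and the Cameron-Martin formula (Theorem \ref{CM}): roughly, the density bound controls how much ``room'' the argmax of $\eta$ has to concentrate, while Cameron-Martin allows us to translate this distribution by paying a price in the RKHS norm of the translation.

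Concretely, denote by $\tau$ the leftmost maximizer of $\eta$ on $[-\de,\de]$. Select $\rho\in(0,\de/2)$, to be optimized, and decompose
\begin{align*}
\rP(A)\le \rP\bigl(\tau\in[-\rho,\rho]\bigr) + \rP\bigl(A,\,|\tau|>\rho\bigr).
\end{align*}
By Lemma \ref{lem:loc_max}, the density of $\tau$ on $(-\de,\de)$ is bounded by $\max(1/(s+\de),1/(\de-s))\le 2/\de$ on $[-\de/2,\de/2]$, so $\rP(\tau\in[-\rho,\rho])\le 4\rho/\de$. This already suggests $\rho\asymp\de\cdot h^\gamma$. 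For the second term, I would apply Theorem \ref{CM} with a deterministic shift $f\in H(\Gamma)$ supported in $[-\rho,\rho]$ and having a large peak there: under the shifted measure $\tilde\rP$ (on which $\eta$ has the law of $\eta+f$), the argmax is forced to lie near the peak of $f$, so $\{|\tau|>\rho\}$ becomes atypical. Combined with the RKHS norm formula $\|f\|^2=\frac{1}{2\pi}\int\psi(\la)|\hat f(\la)|^2\rd\la$ and the Fourier asymptotics $\psi\asymp\pi$ from Lemma \ref{lem:on_conditions}, one can estimate $\|f\|^2$ and apply a Cauchy-Schwarz--type inequality to deduce $\rP(A,|\tau|>\rho)\le C h^\gamma$.

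The hard part, I expect, is the design of the shift $f$ and the careful balancing of the price $e^{\|f\|^2/2}$ against the tail probability $\tilde\rP(|\tau|>\rho)$. The exponent $1/(\ov\al-1)$ emerges naturally from the ratio $\wh\sigma_0^2(\rho)/\wh\sigma_0^2(\de)$, which through Lemma \ref{lem:on_conditions2} is controlled by the upper index $\ov\al$. Unlike Molchan's original argument, which exploits the exact self-similarity of fractional Brownian motion, here one must use the ratio controls \eqref{eq:ratio_contr_1}--\eqref{eq:equiv_1} and \eqref{eq:ratio_contr_2}--\eqref{eq:equiv_2} as substitutes for scaling; this is the central technical challenge. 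The upper tail estimate of Lemma \ref{lem: upper tail} will play a role in controlling $\tilde\rP(|\tau|>\rho)$, and the choice $\rho\asymp\de\cdot h^{1/(\ov\al+\ep'-1)}$ balances the two contributions, yielding $\rP(A)\le K_2 h^\gamma$ after letting $\ep'\to 0$.
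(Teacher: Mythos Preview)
Your proposal has the right ingredients (Cameron--Martin, the argmax density bound of Lemma~\ref{lem:loc_max}, and the target exponent $1/(\ov\al-1)$), but the specific architecture you sketch for the ``second term'' $\rP(A,\,|\tau|>\rho)$ does not balance. To make $\{|\tau|>\rho\}$ atypical under the shifted measure via Lemma~\ref{lem: upper tail}, the peak $H$ of your bump must satisfy $H^2/\wh\sigma_0^2(\de)\gtrsim\log(1/h)$. But a bump of height $H$ and width $\rho$ has RKHS norm $\|f\|^2\asymp H^2/\wh\sigma_0^2(\rho)$ (this follows from the same Fourier computation you allude to), and since $\rho\ll\de$ this is strictly larger than $H^2/\wh\sigma_0^2(\de)$. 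The Cameron--Martin price $e^{(p-1)\|f\|^2/2}$ therefore dominates the gain from the tail estimate, regardless of how you tune $p,q$ or $\rho$. In short: a tall narrow bump is expensive in the RKHS precisely because $\wh\sigma_0^2(\rho)$ is small, and this is the wrong direction.

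The paper's argument avoids this trap by \emph{not} decomposing on $\tau$ and \emph{not} using Lemma~\ref{lem: upper tail} at all. After normalizing to $\xi_\de(x)=\eta(\de x)/\wh\sigma_0(\de)$, it applies Cameron--Martin once to the full event with the shift $\bar f_h(x)=\sqrt{h}-\sqrt{h}\,f(x/h^a)$, where $f$ is a fixed smooth bump with $f(0)=1$ and support in $[-1,1]$. This shift is \emph{small} (height $\sqrt{h}$, not large) and is designed so that the identity $\{\xi_\de<\sqrt{h}\}=\{\xi_\de-\bar f_h<f_h\}$ holds pointwise, where $f_h=\sqrt{h}\,f(\cdot/h^a)$ is supported in $[-h^a,h^a]$. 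H\"older then gives
\[
\rP(\xi_\de<\sqrt{h})\le e^{(p-1)\|\bar f_h\|^2/2}\,\rP(\xi_\de<f_h)^{1/q}.
\]
The event $\{\xi_\de<f_h\}$ forces the argmax into $[-h^a,h^a]$ (since $f_h\equiv 0$ outside and $\xi_\de(0)=0$), so Lemma~\ref{lem:loc_max} bounds it by $Ch^a$. The crucial computation---and this is where your intuition about the exponent is vindicated---is that $\|\bar f_h\|^2$ stays \emph{uniformly bounded} in $\de$ and $h$ precisely when $a=1/(\ov\al+\ep-1)$; this uses the ratio controls of Lemmas~\ref{lem:on_conditions} and~\ref{lem:on_conditions2}. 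Sending $q\to 1$ and $\ep\to 0$ gives any $\gamma<1/(\ov\al-1)$. The point you were missing is that the shift should match the threshold $\sqrt{h}$ rather than tower over it; the role of Cameron--Martin is to \emph{transform the event} $A$ into an argmax constraint, not to make an argmax event rare under a tilted law.
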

\begin{proof}\,
Set
\begin{align*}
\xi_\de(x)= \frac{\eta(\de x)}{\wh\sigma_0(\de)}; \quad |x|\le 1.
\end{align*}
The probability in question becomes
\begin{align*}
\rP(\xi_\de(x)<\sqrt{h};\, \hbox{ for all } |x|\le 1).
\end{align*}
It is plain that for each $0<\de<1$,  the mean zero Gaussian process $\{\xi_\de(x), |x|\le 1\}$ has stationary increments.
Furthermore, its variogram has the following spectral representation:
\begin{align*}
\rE \big[(\xi_\de(y+x)-\xi_\de(y))^2\big]={\sigma_0^2(\de x) \over \wh\sigma^2_0(\de)} =
\frac{2}{\pi}\int_0^\infty \frac{1-\cos(x\la)}{\de\wh\sigma_0^2(\de)\psi(\la/\de)}\rd\la.
\end{align*}
Thus, by Section 11.7 in Marcus and Rosen \cite{MarcusRosen06} 
the reproducing kernel Hilbert space of $\{\xi_\de(x), |x|\le 1\}$ is endowed with the norm
\begin{align*}
\|f-f(0)\|^2 = \frac{1}{2\pi}\int |\de\wh\sigma_0^2(\de)\psi(\la/\de)| \, |\wh f(\la)|^2\rd \la.
\end{align*}

Let $f$ be a non negative smooth function with support in $[-1,1]$ and $f(0)=1$.  Set $f_h(x)= \sqrt{h}f(x/h^{a})$ and
 $\bar f_h(x):=\sqrt{h}-f_h(x)= f_h(0)-f_h(x)$, where $a>1$ is to be chosen later.  The Cameron-Martin
 formula (Theorem \ref{CM}) applied to $\{\xi_\de(x), |x|\le 1\}$ yields
\begin{align*}
&\rP \Big(\xi_{\de}(x)< \sqrt h; |x|\le 1\Big) \\
&= \rP \Big(\xi_{\de}(x) - \bar f_h(x) < f_h(x); |x|\le 1\Big) \\
&= e^{-\|\bar f_h\|^2/2} \rE \Big[e^{-\xi_{\de}(\bar f_h) }{\mathbf 1}_{\{\xi_{\de}(x)<f_h(x);|x|\le 1\}}\Big],
\end{align*}
where ${\mathbf 1}_{F}$ is the indicator of the event $F$.
By using H\"older's inequality, one bounds the last term from above by
\begin{align}\label{eq:lem3.3_1}
e^{(p-1)\|\bar f_h\|^2/2} \rP \big(\xi_{\de}(x)<f_h(x);|x|\le 1\big)^{1/q},
\end{align}
where $1/p+1/q=1$ with $p,q>1$.

Below we show that there exists a finite constant $M$ (depending only on $f$) such that
\begin{align}\label{eq: invariant norm}
\|\bar f_h \|^2 \le M
\end{align}
and
\begin{align}\label{eq: small max location}
\rP \big(\xi_{\de}(x)<f_h(x);|x|\le 1 \big) \le M h^a \mbox{ with } a=1/(\ov\al+\ep-1)
\end{align}
for all $\de, h<1$. As $q$ can be chosen arbitrarily close to 1, and $\ep$ arbitrarily small, the desired estimate follows.

\mk
To show \eqref{eq: invariant norm},  observe that  $\hat f_h(\la)= h^{{1\over 2}+a}\hat f(\la h^{a})$ and $\|\bar f_h\|
= \|f_h - f_h(0)\|$, thus
\begin{align}
\|\bar f_h\|^2 & = \frac{1}{\pi}\int_0^\infty |\de\wh\sigma_0^2(\de)\psi(\la/\de)| \,|\hat f_h(\la)|^2\rd \la \nonumber\\
& = \frac{1}{\pi}\de\wh\sigma_0^2(\de) h^{1+2a}\int_0^\infty \psi(\la/\de)|\hat f(\la h^a)|^2\rd \la \nonumber\\
&=  \frac{1}{\pi} \de\wh\sigma_0^2(\de) h^{1+a}\int_0^\infty \psi\left(\frac{\la}{\de h^a}\right)|\hat f(\la)|^2\rd \la. \label{eq:lem3.3_2}
\end{align}

We split the last integral into two parts.  For $\la\le K_0 \de h^a$ (recalling that $K_0$ is the constant in Lemma
\ref{lem:on_conditions}), the integrand concerning $\psi$ is bounded due to the continuity of $\psi$, then
\begin{align*}
\int_0^{K_0 \de h^a} \psi\left(\frac{\la}{\de h^a}\right)|\hat f(\la)|^2\rd\la \le c\int_0^\infty |\hat f(\la)|^2\rd\la =c \|f \|_{L^2},
\end{align*}
since Fourier transform is an isometry from $L^2(\RR)$ to $L^2(\RR)$. For $\la> K_0 \de h^a$, we use Lemma \ref{lem:on_conditions},
\begin{align*}
\int_{ K_0 \de h^a}^\infty \psi\left(\frac{\la}{\de h^a}\right)|\hat f(\la)|^2\rd\la \le {c \psi\left(\frac{1}{\de h^a}\right)}
 \int_0^\infty |\la|^{\ov\al+\ep}|\hat f(\la)|^2\rd\la,
\end{align*}
where we can assemble the last integral into the constant $c$ because $\wh f$ is a Scharwz function. Therefore,
one obtains for $h<1$ and $\de$ sufficiently small,
\begin{align*}
\|\bar f_h\|^2 \le c \de\wh\sigma_0^2(\de) h^{1+a}  \psi\left(\frac{1}{\de h^a}\right)\le  c\de\wh\sigma_0^2(\de)  \psi(1/\de) h^{1+a-a(\ov\al+\ep)},
\end{align*}
where we used again Lemma \ref{lem:on_conditions} in the second inequality.

By Lemma \ref{lem:on_conditions2}, one has
\begin{align*}
\de\wh\sigma_0^2(\de)\psi(1/\de) \le c \de\int_{1/\de}^{\infty} \frac{\psi(1/\de)}{\psi(\la)}\rd \la \le c\de \int_{1/\de}^\infty
\left( 1\over \de\la\right)^{\un\al-\ep} \rd\la =c.
\end{align*}
Letting $a=1/(\ov\al+\ep-1)$ yields \eqref{eq: invariant norm}.

\mk
Now we prove \eqref{eq: small max location}. Denote by $\tau_{\de}$ the leftmost maximum location of the process $\{\xi_{\de}(x), |x|\le 1\}$.
Since $f_h$ vanishes outside of $[-h^{1/(\be-1)},h^{1/(\be-1)}]$, one has
\begin{align*}
\rP\big(\xi_{\de}(x)<f_h(x);|x|\le 1\big) \le \rP \big(|\tau_{\de}|\le h^{1/(\be-1)}\big).
\end{align*}
By Lemma \ref{lem:loc_max}, the density at zero of the law of each $\tau_{\de}$ is bounded from above by $2$,
uniformly for all $\de>0$. This implies \eqref{eq: small max location} and completes the proof of the  lemma.
$\square$
\end{proof}

We end this section with a similar lower tail estimate, but for large $\de$.

\begin{lemma}\label{lem:lowertail_large}
Assume that (C2) holds. For all $\ga< 1/(\ov\be-1)$, there exists a finite constant $K_3>1$ such that for any $\de>1$
and $h<1$ so that $\de h^{1/(\ov\be+\ep-1)}>1/r_1$ (the constant defined in Lemma \ref{lem:on_conditions_3}),
\begin{align*}
\rP \Big(\eta(x)<\sqrt{h\wh\sigma_0^2(\de)}; |x|\le \de\Big) \le K_3 h^{\ga}.
\end{align*}
\end{lemma}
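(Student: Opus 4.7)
The plan is to adapt the argument of Lemma \ref{lem:lowertail} to the large-$\de$ regime, using Lemma \ref{lem:on_conditions_3} and Lemma \ref{lem:on_conditions4} in place of Lemma \ref{lem:on_conditions} and Lemma \ref{lem:on_conditions2}. Rescale to $\xi_\de(x)=\eta(\de x)/\wh\sigma_0(\de)$ on $|x|\le 1$, so the target probability becomes $\rP(\xi_\de(x)<\sqrt h;\ |x|\le 1)$, and apply Cameron-Martin (Theorem \ref{CM}) with perturbation $f_h(x)=\sqrt h\,f(x/h^a)$, where $f$ is a fixed smooth non-negative function supported in $[-1,1]$ with $f(0)=1$ and
\[
a=\frac{1}{\ov\be+\ep-1}.
\]
Following with H\"older's inequality as in \eqref{eq:lem3.3_1} reduces the proof to the two estimates $\|\bar f_h\|^2\le M$ and $\rP(\xi_\de<f_h;\,|x|\le 1)\le Mh^a$. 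The second estimate is verbatim the one in the proof of Lemma \ref{lem:lowertail}: the support of $f_h$ forces the leftmost maximum of $\xi_\de$ on $[-1,1]$ to lie in $[-h^a,h^a]$, and Lemma \ref{lem:loc_max} gives a uniform density bound of $2$.

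The substantive step is the uniform RKHS-norm bound $\|\bar f_h\|^2\le M$. The RKHS norm of the rescaled process reads
\[
\|\bar f_h\|^2=\frac{1}{\pi}\,\de\wh\sigma_0^2(\de)\,h^{1+a}\int_0^\infty \psi\!\left(\frac{\la}{\de h^a}\right)|\hat f(\la)|^2\,\rd\la.
\]
Write $\zeta=\de h^a$; the hypothesis is precisely $\zeta>1/r_1$. I would split the integral at $\la=r_1\zeta$. On the low-frequency piece $\la\in(0,r_1\zeta)$ the argument $\la/\zeta$ lies in the domain of Lemma \ref{lem:on_conditions_3}, so $\psi(\la/\zeta)\le c\pi(\la/\zeta)$ by \eqref{eq:equiv_3}, and the two-sided ratio control \eqref{eq:ratio_contr_3} applied with reference point $1/\de=h^a/\zeta$ gives $\pi(\la/\zeta)\le c\pi(1/\de)\max\{1,(\la/h^a)^{\ov\be+\ep}\}$. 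Using the Schwartz decay of $\hat f$ to absorb the polynomial factor, this piece of the integral is bounded by $c\pi(1/\de)h^{-a(\ov\be+\ep)}$. The key additional ingredient is the identity $\de\wh\sigma_0^2(\de)\pi(1/\de)\le c$ for large $\de$, which follows from Lemma \ref{lem:on_conditions4} together with the relation $\phi(\de)\asymp 1/(\de\pi(1/\de))$ derived from \eqref{eq:equiv_3}--\eqref{eq:ratio_contr_3} by a calculation parallel to the one in the proof of Lemma \ref{lem:on_conditions2}. With the choice $a=1/(\ov\be+\ep-1)$ the exponent $1+a-a(\ov\be+\ep)$ vanishes, so the low-frequency contribution is $O(1)$.

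The main obstacle is the high-frequency piece $\la>r_1\zeta$, where (C2) provides no information about $\psi(\la/\zeta)$. I would fall back on the universal bound $\psi(\mu)\le c(1+\mu^2)$, valid for every L\'evy exponent because $\int (1\wedge x^2)\nu(\rd x)<\infty$, and use the rapid decay of $\hat f$ to produce a contribution of order $\wh\sigma_0^2(\de)h^{1-a}/\de$. Since $a>1$ the factor $h^{1-a}$ diverges as $h\downarrow 0$, so the finiteness must come from the smallness of $\wh\sigma_0^2(\de)/\de\asymp 1/(\de^2\pi(1/\de))$ for large $\de$. Combining the polynomial lower bound $\pi(1/\de)\ge c\de^{-(\ov\be+\ep)}$ from \eqref{eq:ratio_contr_3} with the assumption $\zeta>1/r_1$ recast as $\de\ge c/h^a$, one checks that this piece also reduces to $c\,h^{1+a-a(\ov\be+\ep)}=c$, by precisely the same cancellation that rescued the low-frequency piece. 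Once $\|\bar f_h\|^2\le M$ is in hand, inserting it into the Cameron-Martin/H\"older output gives $\rP(\xi_\de<\sqrt h;|x|\le 1)\le K_3 h^{a/q}$; letting $q\downarrow 1$ and $\ep\downarrow 0$ yields any exponent $\ga<1/(\ov\be-1)$, as desired.
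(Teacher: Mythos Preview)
Your proposal is correct and follows essentially the same strategy as the paper: rescale, apply Cameron--Martin with the same perturbation $f_h$, reduce via H\"older to the RKHS-norm bound and the maximum-location estimate, and handle the latter by Lemma~\ref{lem:loc_max}. The only differences are organizational: the paper splits the integral $\int_0^\infty \psi(\la/\zeta)|\hat f(\la)|^2\,\rd\la$ into three pieces $\int_0^1+\int_1^{r_1\zeta}+\int_{r_1\zeta}^\infty$ and uses $1/\zeta=1/(\de h^a)$ as reference point for the ratio control, bounding all three pieces by $c\,\psi(1/\zeta)$ and only then passing to $\psi(1/\de)h^{-a(\ov\be+\ep)}$; you split in two and use $1/\de$ as reference directly. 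For the high-frequency tail the paper compares $\zeta^{-2}$ to $\psi(1/\zeta)$ in one step via $\ov\be+\ep<2$, whereas you route through $\wh\sigma_0^2(\de)/\de\asymp 1/(\de^2\pi(1/\de))$ and then invoke the constraint $\de h^a>1/r_1$; both arrive at the same cancellation $h^{1+a-a(\ov\be+\ep)}=1$.
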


\begin{remark}
{\rm This lemma merely says that when $h\to 0$ and  $\de\to\infty$ in such a way that $\de h^a \to \infty$ for some $a<1/(\ov\be-1)$,
the lower tail probability behaves like a power function of $h$. }
\end{remark}
\noindent{\it Proof of Lemma \ref{lem:lowertail_large}}\,
Since \eqref{eq:lem3.3_1} holds for all $h,\de>0$, we only need to show that there exists a finite constant $M$ such that
\eqref{eq: invariant norm} and \eqref{eq: small max location} hold with $a=1/(\ov\be+\ep-1)$, uniformly for all $h<1$ and
all $\de>1$ so that $\de h^{1/(\ov\be+\ep-1)}>1/r_1$.

Let us start with \eqref{eq: invariant norm}. One has as in \eqref{eq:lem3.3_2} that
\begin{align*}
\|\bar f_h\|^2 =  \frac{1}{\pi} \de\wh\sigma_0^2(\de) h^{1+a}\int_0^\infty \psi\left(\frac{\la}{\de h^a}\right)|\hat f(\la)|^2\rd \la.
\end{align*}
Write
\begin{multline*}
\int_0^\infty \psi\left(\frac{\la}{\de h^a}\right) |\hat f(\la)|^2\rd \la \\
= \int_0^1 +\int_1^{r_1\de h^a}
+ \int_{r_1\de h^a}^\infty \psi\left(\frac{\la}{\de h^a}\right)|\hat f(\la)|^2\rd \la
 =: B_1+B_2+B_3.
\end{multline*}
By Lemma \ref{lem:on_conditions_3}, one gets
\begin{align*}
B_1 &\le \psi\left(\frac{1}{\de h^a}\right)\int_0^1 \la^{\un\be-\ep} |\wh f(\la)|^2\rd \la \le c\psi\left(\frac{1}{\de h^a}\right), \\
B_2 &\le \psi\left(\frac{1}{\de h^a}\right)\int_1^{r_1\de h^a} \la^{\ov\be+\ep} |\wh f(\la)|^2\rd \la \le  c\psi\left(\frac{1}{\de h^a}\right).
\end{align*}
Further, a variable change,  $\psi(\la)=o(\la^2)$ at infinity (\cite[Lem. 4.2.2]{MarcusRosen06}), and the fact that $\wh f$ is
a Schwarz function implies that
\begin{align*}
B_3 &= \de h^a \int_{r_1}^\infty \psi(\la) |\wh f(\de h^a\la)|^2 \rd\la \\
&\le c \de h^a \int_{r_1}^\infty  \la^2 |\de h^a\la|^{-k}\rd \la = c(\de h^a)^{1-k}
\end{align*}
for arbitrarily large $k\in\NN$.  Setting $k=3$ and using Lemma \ref{lem:on_conditions_3}, one obtains
$(\de h^a)^{-2} \le c(\de h^a)^{-\ov\be-\ep} \le \psi(1/(\de h^a))$, which implies $B_3\le c \psi\left(1/({\de h^a})\right)$. Combining these estimates yields that
\begin{align*}
\|\bar f_h\|^2 \le c  \de\wh\sigma_0^2(\de) h^{1+a}\psi\left(\frac{1}{\de h^a}\right) \le c\de \wh\sigma_0^2(\de)\psi(1/\de) h^{1+a-a(\ov\be+\ep)},
\end{align*}
where we have used Lemma  \ref{lem:on_conditions_3} again in the second inequality. Applying Lemma \ref{lem:on_conditions_3} as in the proof
of Lemma \ref{lem:on_conditions4}  entails that $\de \wh\sigma_0^2(\de)\psi(1/\de)\le c$ for all $\de$ large enough. Letting $a=1/(\ov\be+\ep-1)$
gives \eqref{eq: invariant norm}.

Remark that, by Lemma \ref{lem:loc_max}, the uniform density bound for the maximum location $\tau_\de$ holds also for large $\de$.
Repeating the same lines as in the proof of Lemma \ref{lem:lowertail} yields \eqref{eq: small max location} which completes the proof.
$\square$

\section{Proofs of Theorems \ref{theo} and  \ref{Th:Polar} }
\label{sec:proof_theo}

Following \cite{BassGriffin85,BassEisenbaumShi00,Marcus01}, the strategy is to first find an upper function for the local
times with relatively small space variable, then to obtain a lower function for the  maximal local times, where the order
of the upper and lower functions are the same. This would give enough information to derive asymptotic results for
the favorite points. Precisely, we prove the following two lemmas.

\begin{lemma}\label{lem:small_x} Let $h_a(t)= \phi^{-1}\left( {t\over (\log 1/t)^a} \right)$ with $a>0$.
\begin{itemize}
\item[(i)] Assume (C1).
  For all $\ga<a/2$,
$$\lim_{t\to 0} \sup_{|x|\le h_a(t)} \frac{(\log 1/t)^\ga (L^x_{\tau(t)}-t)}{t}=0 \quad \mbox{a.s.}$$
\item[(ii)] Assume (C2). For all $\ga<a/2$,
$$\lim_{t\to \infty} \sup_{|x|\le h_a(t)} \frac{(\log 1/t)^\ga (L^x_{\tau(t)}-t)}{t}=0 \quad \mbox{a.s.}$$
\end{itemize}
\end{lemma}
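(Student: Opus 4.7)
The plan is to use the generalised second Ray--Knight Theorem to reduce the question to a deviation estimate for the supremum of the associated Gaussian process $\eta$, apply the upper-tail bound of Lemma \ref{lem: upper tail} (resp.\ Lemma \ref{lem: upper tail large}), and then pass from a suitable sub-sequence to all $t$ by monotonicity. I will describe part (i); part (ii) follows the same template with $t\to\infty$, Lemmas \ref{lem:on_conditions_3}--\ref{lem:on_conditions4}, and Lemma \ref{lem: upper tail large}.

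\textbf{The reduction and the Gaussian estimate.} Since $\eta_0=0$ a.s.\ and $\eta_x^2/2\ge 0$, one has the pointwise inequality $L^x_{\tau(t)}-t\le (L^x_{\tau(t)}+\eta_x^2/2)-t$. Taking supremum over $|x|\le h:=h_a(t)$ (using joint continuity of the local time, guaranteed under (C1), and continuity of $\eta$ to reduce the sup to a countable dense set), Theorem \ref{RK} combined with the algebraic identity $(\eta_x+\sqrt{2t})^2/2-t=\eta_x^2/2+\sqrt{2t}\,\eta_x$ yields
\[
\PP\Big(\sup_{|x|\le h}(L^x_{\tau(t)}-t)>u\Big)\le \rP\Big(M_h^2/2+\sqrt{2t}\,M_h>u\Big),\qquad M_h:=\sup_{|x|\le h}|\eta_x|.
\]
Apply this at $u_t:=\varepsilon t/(\log 1/t)^\gamma$. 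Solving $m^2/2+\sqrt{2t}\,m=u_t$ gives $m_t\asymp \varepsilon\sqrt{t}/(\log 1/t)^\gamma$, while Lemma \ref{lem:on_conditions2} yields $\wh\sigma_0^2(h_a(t))\asymp \phi(h_a(t))=t/(\log 1/t)^a$. Thus $m_t/\wh\sigma_0(h_a(t))\asymp \varepsilon(\log 1/t)^{a/2-\gamma}\to\infty$ since $\gamma<a/2$, which places us in the Gaussian-tail regime of Lemma \ref{lem: upper tail}, giving
\[
\PP\Big(\sup_{|x|\le h_a(t)}(L^x_{\tau(t)}-t)>u_t\Big)\le c_1\exp\!\bigl(-c'\varepsilon^2(\log 1/t)^{a-2\gamma}\bigr).
\]

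\textbf{Discretization, continuum extension, and the main obstacle.} Set $t_n:=e^{-n^\alpha}$ with $0<\alpha<1/(1+\gamma)$. Evaluated at $t_n$, the above bound produces a summable series $\sum_n\exp(-c'\varepsilon^2 n^{\alpha(a-2\gamma)})$, so Borel--Cantelli gives $\sup_{|x|\le h_a(t_n)}(L^x_{\tau(t_n)}-t_n)\le u_{t_n}$ eventually a.s. For $t\in[t_{n+1},t_n]$, the monotonicity of both $t\mapsto L^x_{\tau(t)}$ and $t\mapsto h_a(t)$ yields
\[
\sup_{|x|\le h_a(t)}(L^x_{\tau(t)}-t)\le \sup_{|x|\le h_a(t_n)}(L^x_{\tau(t_n)}-t_n)+(t_n-t_{n+1}).
\]
A Taylor expansion gives $t_n-t_{n+1}\asymp t_n\cdot\alpha n^{\alpha-1}$, and together with $(\log 1/t)^\gamma\asymp n^{\alpha\gamma}$ this shows the normalized interpolation error is $O(n^{\alpha(1+\gamma)-1})\to 0$ under our choice of $\alpha$; letting $\varepsilon\downarrow 0$ completes the argument. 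The principal difficulty is precisely calibrating the grid $t_n$: it must be \emph{sparse} enough for the Gaussian-tail probabilities to form a summable series, yet \emph{fine} enough ($t_{n+1}/t_n\to 1$ fast enough) to absorb the interpolation error $t_n-t_{n+1}$ at the scale $t/(\log 1/t)^\gamma$. The standing assumption $\gamma<a/2$ leaves exactly the room required to choose such $\alpha$, and the replacement of Slepian's Lemma by the tail bound of Lemma \ref{lem: upper tail}---itself built on the ratio control $\wh\sigma_0^2\asymp\phi$ from Lemma \ref{lem:on_conditions2}---is what removes the monotonicity hypothesis on $\sigma_0^2$ used by Marcus.
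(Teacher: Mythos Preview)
Your proposal is correct and follows essentially the same route as the paper: Ray--Knight reduction to the associated Gaussian process, the upper-tail estimate of Lemma~\ref{lem: upper tail} (resp.\ \ref{lem: upper tail large}) together with $\wh\sigma_0^2(h_a(t))\asymp t/(\log 1/t)^a$ from Lemma~\ref{lem:on_conditions2}, Borel--Cantelli along $t_n=e^{-n^\alpha}$, and an interpolation step. The only cosmetic differences are that the paper splits the Gaussian event into two pieces $P_1,P_2$ via a union bound (rather than bounding $\sup(\eta_x^2/2+\sqrt{2t}\,\eta_x)$ by the quadratic in $M_h$) and chooses its threshold $\lambda$ with an extra $\sqrt{\log\log 1/t}$ factor, which yields a marginally sharper intermediate bound but is not needed for the stated conclusion.
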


\begin{lemma}\label{lem:all_x}
 Set $L^*_t= \sup_{x} L^x_t$.
\begin{itemize}
\item[(i)]  Assume (C1). For any $\ga>(\ov\al-1)/(2-\ov\al)$,
\begin{align*}
\lim_{t\to 0} \frac{(\log 1/t)^\ga(L^*_{\tau(t)}-t)}{t}=\infty \quad \mbox{a.s.}
\end{align*}
\item[(ii)]  Assume (C2). For any $\ga>(\ov\be-1)/(2-\ov\be)$,
\begin{align*}
\lim_{t\to \infty} \frac{(\log 1/t)^\ga(L^*_{\tau(t)}-t)}{t}=\infty \quad \mbox{a.s.}
\end{align*}
\end{itemize}
\end{lemma}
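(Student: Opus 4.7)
The plan is to combine the generalized Ray-Knight isomorphism with the Gaussian tail estimates of Section \ref{sec:tail_estimates} and apply Borel-Cantelli along a geometric sequence $t_n\to 0$, then extend to the full limit by monotonicity.

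First, fix $\gamma_0 > (\ov\al - 1)/(2-\ov\al)$ and arbitrary $C>0$. The deterministic inequality $\sup_{|x|\le\de} L^x_{\tau(t)} \ge \sup_{|x|\le\de}(L^x_{\tau(t)} + \eta_x^2/2) - \sup_{|x|\le\de}\eta_x^2/2$ together with a union bound gives, for any $s,M>0$,
\begin{align*}
\PP\otimes\rP(L^*_{\tau(t)} - t \le s) \le \rP\Big(\sup_{|x|\le\de}\tfrac{1}{2}(\eta_x+\sqrt{2t})^2 \le t+s+M\Big) + \rP\Big(\sup_{|x|\le\de}\eta_x^2/2 > M\Big),
\end{align*}
where we invoked Theorem \ref{RK} to replace the functional $\sup_{|x|\le\de}(L^x_{\tau(t)}+\eta_x^2/2)$ of the process $U^x=L^x_{\tau(t)}+\eta_x^2/2$ by its Gaussian counterpart $\sup_{|x|\le\de}\tfrac{1}{2}(\eta_x+\sqrt{2t})^2$. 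Since $\eta_0=0$ forces $\sup\eta\ge 0$, the inequality $\tfrac{1}{2}(\eta_x+\sqrt{2t})^2 \ge t + \sqrt{2t}\eta_x$ further reduces the first right-hand term to $\rP(\sup_{|x|\le\de}\eta_x \le (s+M)/\sqrt{2t})$, which is directly amenable to Lemma \ref{lem:lowertail}; the second term is controlled by Lemma \ref{lem: upper tail}.

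Second, with $t_n=e^{-n}$ and $s_n=C t_n/n^{\gamma_0}$, choose $\de_n=\phi^{-1}(c\,t_n/n^a)$ so that $\wh\sigma_0^2(\de_n)\asymp t_n/n^a$ (Remark \ref{rem:2.4}), along with $M_n=c'\wh\sigma_0^2(\de_n)\log n$ and $h_n=(s_n+M_n)^2/(2t_n\wh\sigma_0^2(\de_n))$. The display above then yields $\PP\otimes\rP(L^*_{\tau(t_n)}-t_n\le s_n) \le K_2 h_n^\gamma + c_1 n^{-2c'/c_2}$, the second term being summable for $c'$ sufficiently large. For the first, the power-type ratio bounds on $\phi$ from Lemma \ref{lem:on_conditions2}, combined with the freedom to take $\gamma$ arbitrarily close to $1/(\ov\al-1)$ in Lemma \ref{lem:lowertail}, allow one to balance the exponent $a$ so that $\sum_n h_n^\gamma<\infty$; this balancing is precisely what forces the critical exponent $(\ov\al-1)/(2-\ov\al)$. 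Borel-Cantelli then gives $L^*_{\tau(t_n)}-t_n > s_n$ eventually a.s.\ for each fixed $C$; letting $C$ range over a countable sequence tending to infinity, and interpolating $t\in(t_{n+1},t_n]$ via the monotonicity of $t\mapsto L^x_{\tau(t)}$ together with $\log(1/t)\asymp n$, yields part (i).

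Part (ii) runs in parallel under (C2): use Lemma \ref{lem:lowertail_large} in place of Lemma \ref{lem:lowertail} and Lemma \ref{lem: upper tail large} in place of Lemma \ref{lem: upper tail}, along a sequence $t_n\to\infty$ with $\de_n\to\infty$ chosen so that the admissibility constraint $\de_n h_n^{1/(\ov\be+\ep-1)}>1/r_1$ of Lemma \ref{lem:lowertail_large} is met. The main obstacle, in both parts, is the parameter balancing in the second paragraph: one must carefully track the exponents from Lemmas \ref{lem:on_conditions2} and \ref{lem:lowertail} (respectively \ref{lem:on_conditions4} and \ref{lem:lowertail_large}) to extract the sharp exponent $(\ov\al-1)/(2-\ov\al)$ (respectively $(\ov\be-1)/(2-\ov\be)$); the interpolation step, while routine, also requires care since $L^*_{\tau(t)}-t$ is not a priori monotone in $t$.
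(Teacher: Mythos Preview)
Your decomposition via the union bound
\[
\PP(L^*_{\tau(t)}-t\le s)\le \rP\Big(\sup_{|x|\le\de}\eta_x\le (s+M)/\sqrt{2t}\Big)+\rP\Big(\sup_{|x|\le\de}\eta_x^2/2>M\Big)
\]
is a legitimate alternative to the paper's device (choose $C_1$ so that $\rP(\sup\eta^2/2\le C_1th)\ge 1/2$, whence $\PP(E_1)\le 2\,\rP\times\PP(E_3)$). The paper's route is cleaner because it avoids the extra parameter $M_n$, but your version would also work in principle.

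The real gap is the choice of subsequence. A geometric sequence $t_n=e^{-n}$ makes the interpolation step collapse: for $t\in(t_{n+1},t_n]$ one has
\[
L^*_{\tau(t)}-t\ \ge\ \big(L^*_{\tau(t_{n+1})}-t_{n+1}\big)-(t_n-t_{n+1}),
\]
and with $t_n=e^{-n}$ the loss $t_n-t_{n+1}=(1-e^{-1})t_n$ is of order $t_n$, which swamps your target $s_n=Ct_n/n^{\gamma_0}$. Monotonicity of $t\mapsto L^x_{\tau(t)}$ does not help here because the subtracted $t$ moves by a constant multiple of itself. The paper instead takes $t_k=\exp(-k^d)$ with $0<d<2-\ov\al$, so that $(t_{k-1}-t_k)/t_k\asymp k^{-(1-d)}\to 0$; the constraint $d<2-\ov\al$ is exactly what makes this loss $k^{-(1-d)}$ smaller than the gain $k^{-(\ov\al-1)(1+\ep)}$. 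After interpolation the exponent in $\log(1/t)$ becomes $(\ov\al-1)(1+\ep)/d$, and letting $d\uparrow 2-\ov\al$, $\ep\downarrow 0$ produces the threshold $(\ov\al-1)/(2-\ov\al)$. In other words, the critical exponent is \emph{not} dictated by the Borel--Cantelli balancing you describe in the second paragraph, but by the tension between Borel--Cantelli (which wants $h_k$ small) and the interpolation (which forces $d$ small). Your parameter count with $t_n=e^{-n}$ would, if the interpolation worked, yield the strictly smaller threshold $\ov\al-1$; the fact that it does not is the signal that the sequence is too coarse.
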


\noindent{\it Proof of Lemma \ref{lem:small_x}}\,
Let $0<\de<1$.  By Theorem \ref{RK},
\begin{align*}
&\PP \bigg(\sup_{0\le x\le h_a(t)} L^x_{\tau(t)}  - t \ge (1+\ep)\la\bigg) \\
&\le  \rP\times\PP \bigg(\sup_{0\le x\le h_a(t)} \Big( L^x_{\tau(t)}  + {\eta^2(x) \over 2} - t  \Big)\ge (1+\ep)\la \bigg) \\
&= \rP\bigg(\sup_{0\le x\le h(t)}  \Big({\eta^2(x) \over 2}  + \sqrt{2t} \eta(x) \Big) \ge (1+\ep)\la\bigg) \\
&\le \rP \bigg(\sup_{0\le x\le h_a(t)} {\eta^2(x)} \ge 2\ep\la\bigg)  + \rP\bigg(\sup_{0\le x\le h_a(t)}  \sqrt{2t} |\eta(x)| \ge \la\bigg)
 := P_1+P_2.
\end{align*}
Set
$$
\la = \sqrt{{2t \cdot c_3\wh\sigma_0^2(h_a(t))(1+\ep)\log\log 1/t} \over {\de}}.
$$
 Then the upper tail estimate in Lemma \ref{lem: upper tail} yields,
\begin{align*}
 P_2 \le  c_2\exp\left( - \frac{(1+\ep)\log\log 1/t}{\de} \right).
\end{align*}
By Lemma \ref{lem:on_conditions2},  $\wh\sigma^2_0(h_a(t))\asymp (\log 1/t)^{-a}$ around zero,  one deduces
\begin{align*}
P_1 &=\rP \bigg(\sup_{0\le x\le h_a(t)} |\eta(x)|\ge \sqrt{2\ep\la}\bigg) \\
&\le c_2\exp \left( - 2\ep \sqrt{{2t(1+\ep)\log\log 1/t \over c_3\wh\sigma_0^2(h_a(t))\de}}  \right) \\
&= c_2\exp \left(  - c\ep \sqrt{ (1+\ep) \log\log 1/t \over \de/(\log 1/t)^a }  \right) \\
&\le  c_2\exp\left( - \frac{(1+\ep)\log\log 1/t}{\de} \right)
\end{align*}
for $t$ sufficiently small.

Let $t_k= \exp(-k^\de)$.  When $t=t_k$,  both $P_1$ and $P_2$ are the general term  of a convergent series.
By the Borel-Cantelli Lemma,  a.s. for all $k$ large enough,
\begin{align}\label{eq:p.17}
 \sup_{0\le |x|\le h_a(t_k)} {|\log t_k|^{a/2} (L^x_{\tau(t_k)}-t_k)  \over t_k \sqrt{\log\log 1/t_k}  } \le  c/\sqrt{\de}.
\end{align}
Thus, one has a.s. for all large $k$, and $t_{k-1}< t < t_k$
\begin{align*}
 \sup_{0\le |x|\le h_a(t_k)} {|\log t_k|^{a/2} (L^x_{\tau(t)}-t)  \over t_k \sqrt{\log\log 1/t_k}  } \le  c/\sqrt{\de}
 + { (t_k-t_{k-1})|\log t_k|^{a/2} \over  t_k \sqrt{\log|\log t_k|}},
\end{align*}
where the second term is bounded from above by $\sqrt{\de}$.  Therefore,  a.s.
\begin{align*}
\sup_{|x|\le h_a(t)} {|\log t|^{a/2} (L^x_{\tau(t)}-t)  \over t \sqrt{\log\log 1/t}  } \le \left(  {c\over \sqrt{\de}}
+\sqrt\de  \right) { t_k\log\log 1/t_k \over   t_{k-1} \log\log t_{k-1} } \le 2\left(  {c\over \sqrt{\de}} +\sqrt\de  \right)
\end{align*}
for all  $t$ sufficiently small.  The first claim follows.

To show the second claim, the proof is identical except that we use Lemma \ref{lem: upper tail large} instead of
Lemma \ref{lem: upper tail}, and that we choose $t_k = \exp(k^\de)$.  We omit the details.
$\square$
\medskip

\noindent{\it Proof of Lemma \ref{lem:all_x}}\,
Define the events
\begin{align*}
E_1 &= \bra{L^x_{\tau(t)}-t\le C_1th; |x|\le \phi^{-1}(ht)};\\
E_2 &= \bra{{\eta^2(x)\over 2}\le C_1th; |x|\le \phi^{-1}(ht)};\\
E_3 &=\bra{L^x_{\tau(t)}+ {\eta^2(x)\over 2}- t\le 2C_1th; |x|\le \phi^{-1}(ht)}.
\end{align*}
By the Markov inequality and the second moment estimate in Lemma \ref{lem: upper tail}, we can choose the constant
$C_1$ in the event $E_2$ large enough such that $\rP(E_2)>1/2$ uniformly in $h,t<1$.  Observe that by the
independence and Theorem \ref{RK}, one has
\begin{align*}
{\PP(E_1)\over 2}\le \PP(E_1) \rP(E_2)&\le \rP\times\PP(E_3)\\
&= \rP\left({\eta^2(x)\over 2}+\sqrt{2t}\eta(x)\le 2C_1th;|x|\le \phi^{-1}(ht) \right)\\
&\le \rP\left(\eta(x)\le \sqrt{2}C_1\sqrt{th}\sqrt{h};|x|\le \phi^{-1}(ht) \right).
\end{align*}
Recall that by Lemma \ref{lem:on_conditions2}, $\wh\sigma_0^2(\phi^{-1}(\de))\asymp \de$ for $\de$ sufficiently small,
in other words, we can choose $C_2<\infty$ such that
$$\sqrt{2}C_1\de\le C_2\wh\sigma_0(\phi^{-1}(\de^2)).$$
In particular, for all $\ga<1/(\ov\al-1)$ and $t,h$ sufficiently small, one has by Lemma \ref{lem:lowertail} that
\begin{align*}
\PP(E_1)\le 2 \rP\left(\eta(x)\le C_2\sqrt{h\wh\sigma_0^2(\phi^{-1}(th))} ;|x|\le \phi^{-1}(ht) \right)\le K h^{\ga}
\end{align*}
with some finite constant $K$. It follows that
\begin{align*}
\PP(L^*_{\tau(t)}-t\le C_1th) \le Kh^{\ga}.
\end{align*}

Now let $0<d<2-\ov\al$, $t=t_k:= \exp(-k^d)$ and $h=h_k:= k^{-(\ov\al-1)(1+\ep)}$ with $0<\ep<(1-d)/(\ov\al-1)-1$.
The Borel-Cantelli Lemma implies that almost surely for all $k$ sufficiently large,
\begin{align*}
L^*_{\tau(t_k)}-t_k  \ge \frac{C_1t_k}{k^{(\ov\al-1)(1+\ep)}}.
\end{align*}
Let $t_{k}\le t<t_{k-1}$, then,
\begin{align*}
L^*_{\tau(t)}-t \ge -(t-t_k) + \frac{C_1t_k}{k^{(\ov\al-1)(1+\ep)}}
\end{align*}
and
\begin{align*}
{L^*_{\tau(t)}-t \over t} \ge -{(t_{k-1}-t_{k})\over t_{k}} + \frac{C_1t_k}{t_{k-1}k^{(\ov\al-1)(1+\ep)}} \ge -2k^{-(1-d)}
+ {C_1(1-\de) \over k^{(\ov\al-1)(1+\ep)}}
\end{align*}
for all $\de>0$ and large $k$. By our choice of $d$ and $\ep$, one deduces
\begin{align*}
{L^*_{\tau(t)}-t \over t} \ge {C_1(1-2\de) \over k^{(\ov\al-1)(1+\ep)}} \ge { C_1(1-3\de)\over (\log 1/t)^{(\ov\al-1)(1+\ep)/d} }.
\end{align*}
As $d$ can be chosen arbitrarily close to $2-\ov\al$, the first claim of Lemma \ref{lem:all_x} follows.

Similarly, applying the second Ray-Knight Theorem and Lemma \ref{lem:lowertail_large} yields
$$\PP(E_1) \le Kh^{\ga}$$
for $h<1$, $\de>1$ such that $\de h^{1/(\ov\be+\ep-1)}>1/r_1$. Repeating the arguments in the last paragraph for
$t_k=\exp(k^d)$ and $h_k= k^{-(\ov\be-1)(1+\ep)}$ (with some tuning in the choice of $\ep$) gives the second claim of Lemma \ref{lem:all_x}.
$\square$

\sk
We are ready to finish the proof of Theorem  \ref{theo}.

\sk
\noindent{\it Proof of Theorem \ref{theo}.}\,
 We only proof part (i) asymptotic behavior of $\pro V$ around zero, the proof of (ii) is similar. Combining Lemma \ref{lem:small_x} (i) and
 Lemma \ref{lem:all_x} (i) entails that for $\ga>2(\ov\al-1)/(2-\ov\al)$,
\begin{align}\label{eq:V_tau_small}
V_{\tau(t)}\ge \phi^{-1}\left( \frac{t}{(\log t)^\ga}\right)
\end{align}
for all $t$ sufficiently small. In other words, for all small $t$,
\begin{align*}
V_t \ge \phi^{-1}\left( \frac{L^0_t}{(\log L^0_t)^\ga}\right),
\end{align*}
which implies that
\begin{align*}
 \lim_{t\to 0}  \frac{ V_t}{\phi^{-1}\left( \frac{L^0_t}{(\log L^0_t)^\ga}\right)}=\infty.
\end{align*}
This finishes the proof of Theorem \ref{theo}.
$\square$

\sk
We end this section by proving Theorem  \ref{Th:Polar}.

\sk
\noindent{\it Proof of Theorem  \ref{Th:Polar}.}\,
By Theorem 1.3 and Proposition 1.4 of Eisenbaum and Khoshnevisan \cite{EKh02},  it is sufficient to verify that
there is a sequence $\{x_n\}$ such that $\lim\limits_{n \to \infty}x_n = 0$ and
\begin{equation}\label{Con:EKo2}
\lim_{k \to \infty} (\ln k)^{1/2} \sup_{\tiny{\begin{array} {ll}&n,m \in \mathbb N,\\
 &|n-m|\ge k \end{array}}} \frac{u_{T_0}(x_m, x_n)}{\sqrt{u_{T_0}(x_m, x_m) u_{T_0}(x_n, x_n)}} = 0,
\end{equation}
where $u_{T_0}(x,y)$ is the function given in \eqref{Def:u}. The verification is similar to the proof of Theorem 5.2
in \cite{EKh02} and we give it for the sake of completeness.  It follow from  \eqref{Def:u}  that for any $x, y \in \R$,
\begin{equation}\label{Eq:EK3}
\frac{u_{T_0}(x, y)}{\sqrt{u_{T_0}(x, x) u_{T_0}(y, y)} }\le \frac 1 2 \frac{\sigma_0(x)}{\sigma_0(y)}
+ \frac{|\sigma_0^2(y)- \sigma_0^2(x-y)|}{\sigma_0(x)\sigma_0(y)}.
\end{equation}
Under Condition (C1), {we apply Remark \ref{rem:2.4}} to see that for
any $0 < x < y \le 1$,
\begin{equation}\label{Eq:EK4}
\frac{\sigma_0(x)}{\sigma_0(y)} \le \bigg(\frac x y\bigg)^{\alpha - \varepsilon},
\end{equation}
where $\alpha> 0$ is a constant and $\varepsilon \in (0, \alpha)$. In order to bound the second term on the right hand side of
\eqref{Eq:EK3}, we use the fact that the covariance function $u_{T_0}(x,y)$ of the associated Gaussian process
 is the $0$-potential density of $X$ killed the first time it hits zero.  By \cite[Formula (7.232), p.324]{MarcusRosen06},
\begin{align*}
\sigma_0^2(x)\ge |\sigma_0^2(y) - \sigma_0^2(y-x)|.
\end{align*}
Therefore, the second term is bounded from above by $\frac{\sigma_0(x)}{\sigma_0(y)}$.  We have thus proved
\begin{align*}
\frac{u_{T_0}(x, y)}{\sqrt{u_{T_0}(x, x) u_{T_0}(y, y)} }\le c  \bigg(\frac x y\bigg)^{\alpha - \varepsilon}.
\end{align*}

Once this is in place, the rest of the proof is almost the same as that in 
the bottom on page 254 of \cite{EKh02} and is omitted. $\square$

 \section{Examples}\label{sec:example}

{In this section, we provide more examples of symmetric L\'evy processes 
(besides the semistable L\'evy processes in \cite{choi1994} that we have 
mentioned earlier) that satisfy the conditions of the present paper, 
but not those in Marcus \cite{Marcus01}.} To avoid repetition, we only 
present examples concerning the asymptotic behavior of the favorite 
points around zero. 


\begin{example}\,
Let $1<\al<2$ and $0<c_1< c_2$ be constants such that $c_2\al/(2c_1)<2$ 
and $c_1\al/(2c_2)>1$. For any decreasing sequence $\{b_n; n\ge 0\}$ such 
that $b_0=1$ and $b_n\to 0$,  define
\begin{align*}
\theta(x)= \begin{cases} c_1|x|^{\al+1}, & |x|\in (b_{2k+2},b_{2k+1}],\\
 c_2|x|^{\al+1}, & |x|\in (b_{2k+1},b_{2k}].
\end{cases}
\end{align*}
Then (C1) holds with $\un\al= c_1\al /(2c_2)< c_2\al/(2c_1)=\ov\al$.
\end{example}

\begin{example}\,
Let $1<\al_1<\al_2<2$ and  $\{b_n; n\ge 0\}$ be a real sequence decreasing to zero such that $b_0=1$, $b_1={1 \over 2}$,
and for any integer $k\ge 1$,
\begin{align}
b_{2k}\le b_{2k-1}/(k+1)\label{eq:example52_0},\\
\int_{b_{2k+1}}^{b_{2k}} \frac{\rd x}{x^{\al_2+1}} \le \int_{b_{2k}}^{b_{2k-1}} \frac{\rd x}{x^{\al_1+1}}, \label{eq:example52_1}\\
\int_{b_{2k+1}}^{b_{2k}} \frac{\rd x}{x^{\al_2-1}} \le \int_0^{b_{2k+1}} \frac{\rd x}{x^{\al_1-1}}. \label{eq:example52_2}
\end{align}
This can be done inductively. We choose first $b_{2k}$ satisfying \eqref{eq:example52_0} which ensures the convergence as $k\to \infty$,
then chose $b_{2k+1}$ close to $b_{2k}$ so that \eqref{eq:example52_1} and \eqref{eq:example52_2} hold.  Let
\begin{align*}
\theta(x)= \begin{cases} |x|^{\al_1+1}, & |x|\in (b_{2k+2},b_{2k+1}],\\
 |x|^{\al_2+1}, & |x|\in (b_{2k+1},b_{2k}].
\end{cases}
\end{align*}
We claim that the following properties hold:
\begin{itemize}
\item[(i)] $\pi(\la)\asymp  |\la|^{\al_1}$ for $|\la|$ large enough;
\item[(ii)] $ \psi(\la)\asymp |\la|^{\al_1}$ for $|\la|$ large enough;
\item[(iii)] Condition (C1) fails.
\end{itemize}
In particular, the conclusion of Lemma \ref{lem:on_conditions} holds 
with $\ov\al = \un\al=\al_1$ even if (C1) fails.
Consequently,  Part (i) of Theorem \ref{theo} still holds with $\ov\al=\al_1$.

Let us show the claims (i)-(iii). Observe by \eqref{eq:LK_formula} and 
the integrability of $1/\theta(x)$ at infinity ($\rd x/\theta(x)$
is a L\'evy measure) that
\begin{align}\label{eq:in_exmaple52}
c|\la|^{\al_1}\le \psi(\la) \le C|\la|^{\al_2}
\end{align}
for all $|\la|$ large enough. Similarly, 
$$c|\la|^{\al_1}\le \pi(\la)\le C|\la|^{\al_2}$$ 
for $|\la|$ sufficiently large. To show (i),
it suffices to show $\pi(\la)\le C|\la|^{\al_1}$ for large $|\la|$.  For any $\la>1$, there exists a constant $k_0$ such that either
 $1/\la\in (b_{2k_0+2}, b_{2k_0+1}]$ or $1/\la\in (b_{2k_0+1}, b_{2k_0}]$. In the first case,
\begin{multline*}
\pi(\la)/2 \le \frac{1}{\al_1}\left(\la^{\al_1} - b_{2k_0+1}^{-\al_1} + b_{2k_0}^{-\al_1} - b_{2k_0-1}^{-\al_1} + \cdots + b_2^{-\al_1} - b_1^{-\al_1}\right) \\
 + \frac{1}{\al_2}\left( b_{2k_0+1}^{-\al_2} - b_{2k_0}^{-\al_2} + b_{2k_0-1}^{-\al_2} - b_{2k_0-2}^{-\al_2} + \cdots+ b_1^{-\al_2}
 - b_0^{-\al_2} \right) +  \int_1^\infty \frac{\rd x}{\theta(x)}.
\end{multline*}
Plainly, the first sum is bounded from above by $C\la^{\al_1}$ and the integral is finite. It follows from \eqref{eq:example52_1}
that the second sum is bounded from above by $C\la^{\al_1}$.  In the second case when $1/\la\in (b_{2k_0+1}, b_{2k_0}]$,
\begin{multline}
\pi(\la)/2 \le \frac{1}{\al_1}\left( b_{2k_0}^{-\al_1} - b_{2k_0-1}^{-\al_1} + \cdots + b_2^{-\al_1} - b_1^{-\al_1}\right) \\
+ \frac{1}{\al_2}\left( b_{2k_0+1}^{-\al_2} - b_{2k_0}^{-\al_2} + b_{2k_0-1}^{-\al_2} - b_{2k_0-2}^{-\al_2} + \cdots+ b_1^{-\al_2} - b_0^{-\al_2} \right) +  \int_1^\infty \frac{\rd x}{\theta(x)}.\label{eq:example52_4}
\end{multline}
Again the both sums are bounded from above by $C\la^{\al_1}$ by \eqref{eq:example52_1} and the integral is finite. Hence, $\pi(\la)\le C \la^{\al_1}$ for $\la>1$, as desired.

Now we verify (ii). Recalling \eqref{eq:in_exmaple52} and Lemma \ref{lem:KS} (applied to the L\'evy measure rather than the spectral measure), we only need to  show that
\begin{align}\label{eq:example52_3}
\la^2\int_{0}^{1/\la} x^2\frac{\rd x}{\theta(x)}  \le C\la^{\al_1}
\end{align}
 for $\la$ large enough. Let $\la>1$ and $k_1$ be the integer so that either $1/\la\in (b_{2k_1+2}, b_{2k_1+1}]$ or $1/\la\in (b_{2k_1+1}, b_{2k_1}]$.
 In the first case, by \eqref{eq:example52_2}
\begin{multline*}
\la^2\int_{0}^{1/\la} x^2\frac{\rd x}{\theta(x)}\le \\ \la^2\left( \int_0^{1/\la} \frac{\rd x}{x^{\al_1-1}} +  \int_{b_{2k_1+3}}^{b_{2k_1+2}} \frac{\rd x}{x^{\al_2-1}}
+ \int_{b_{2k_1+5}}^{b_{2k_1+4}} \frac{\rd x}{x^{\al_2-1}}+\cdots  \right)\\
 \le \frac{1}{2-\al_1} \la^{\al_1}  + \frac{1}{2-\al_2} \la^{\al_1}\le C\la^{\al_1},
\end{multline*}
as desired. We prove similarly  in the second case that \eqref{eq:example52_3} holds.

Finally, it follows from \eqref{eq:example52_4} that
\begin{align*}
\limsup_{x\to 0}\frac{x/\theta(x)}{\nu(y: |y|\ge x)}=\infty.
\end{align*}
Thus (C1) is not satisfied.
 \end{example}

 \section{Extensions}\label{sec_ext}

\subsection{Transient case}\label{s:transient}

In this subsection we assume that the characteristic exponent $\psi$ of the pure jump symmetric L\'evy process $X$ satisfies
\begin{equation}\label{Con:tran}
\int_1^\infty \frac{d\lambda}{\psi(\lambda)} < \infty \ \ \hbox{ and }\ \ \ \int_0^1 \frac{d\lambda} {\psi(\lambda)} <\infty.
\end{equation}
Consequently, $X$  is transient,  its local times exist, and its $0$-potential density $u(x,y)$ is bounded and continuous.
Further, the support of potential operators of $X$ is $\RR$, it follows from \cite[p.55]{Bertoin96book} and the absolute continuity of potential operators that
\begin{equation}\label{Con:h_x}
h_x = \PP^x(T_0<\infty)>0, \quad \forall x\in\RR.
\end{equation}

We state the generalized second Ray-Knight Theorem for transient symmetric Markov processes due to Eisenbaum {\it et al.}
\cite{eisenbaum2000ray}, see also \cite[Thm. 8.2.3]{MarcusRosen06}.  Set $\tau^-(t)=\inf\bra{s: L^0_s\ge t}$.

\begin{theorem}\label{RK:tran}
Assume that $h_x=\PP^x(T_0<\infty)>0$ for all $x\in\RR$.
Let $\eta=\bra{\eta_x, x\in\RR}$ be a mean zero Gaussian process with covariance $u_{T_0}(x,y)$. Then
for any $t>0$ and any countable subset $D\subset\RR$, under $\PP^0\times\rP\times\rP_\rho$, in law
\begin{equation}
\bra{L^x_{\tau^-(t\wedge L^0_\infty)}+{\eta_x^2 \over 2}; x\in D}=\bra{{1\over 2}{(\eta_x+ h_x\sqrt{2(t\wedge\rho)})^2}; x\in D},
\end{equation}
where $\rho$ is an exponential random variable with mean $u(0,0)$.
\end{theorem}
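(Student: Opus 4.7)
The plan is to derive Theorem~\ref{RK:tran} from the recurrent version (Theorem~\ref{RK}) by exploiting the fact that the killed process $X^{T_0}$ and its associated Gaussian field $\eta$ with covariance $u_{T_0}$ are the same in both the recurrent and transient regimes. By Kolmogorov consistency it suffices to prove the identity for finite subsets $D=\{x_1,\ldots, x_n\}$. The starting point is the standard decomposition
\[
u(x,y) = u_{T_0}(x,y) + h_x h_y u(0,0), \qquad x,y \in \RR,
\]
which is valid under~\eqref{Con:tran} since $u(0,0)<\infty$. This shows that a centered Gaussian field $G$ with covariance $u$ can be realized as $G_x = \eta_x + h_x Z$, where $Z\sim N(0, u(0,0))$ is independent of $\eta$. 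This is the linear-algebraic counterpart of the shift by $h_x\sqrt{2(t\wedge\rho)}$ appearing on the right-hand side, and it is what forces the factor $h_x$ rather than $1$.

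A second key input is excursion theory at $0$. Under~\eqref{Con:tran}, $X$ visits $0$ only finitely many times, and the total local time $L^0_\infty$ is exponentially distributed with mean $u(0,0)$: the excursion process from $0$ is a Poisson point process killed at a geometric number of excursions, the killing rate yielding an $\mathrm{Exp}(1/u(0,0))$ distribution for the accumulated local time before extinction. This matches the law of the auxiliary $\rho$ on the right-hand side, and lets the two independent sources of exponential randomness (intrinsic $L^0_\infty$ on the left and auxiliary $\rho$ on the right) be coupled consistently.

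With these two ingredients in hand, I would compute the joint Laplace transform of both sides on $D$ and verify equality. Conditionally on $\{L^0_\infty > s\}$ for $s<t$, the process $\{L^x_{\tau^-(s)}\}_{x\in D}$ has the distribution predicted by Theorem~\ref{RK} applied to a resurrected (recurrent) extension of $X$; for $s\ge t$, $\tau^-(t\wedge L^0_\infty)=\tau^-(t)$, while for $s<t$, $\tau^-(t\wedge L^0_\infty)=\tau^-(s)$ corresponds to the epoch of the final excursion. Integrating these two regimes against the exponential law of $L^0_\infty$ and against the independent Gaussian $\eta^2/2$ produces the right-hand side via the decomposition $G = \eta + h\cdot Z$, using Dynkin's (or Eisenbaum's) isomorphism theorem applied to $G$ to handle the quadratic terms.

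The main obstacle is the precise bookkeeping of the two independent exponential randomnesses combined with the Gaussian shift. A clean way to manage this is to first prove the identity conditionally on $\{L^0_\infty = s\}$ for fixed $s$, apply Theorem~\ref{RK} to the resurrected recurrent extension of $X$ truncated at the final excursion, and then integrate over the exponential law of $s$. The subtle point is extracting the factor $h_x$ inside the square root on the right-hand side; this comes directly from the decomposition of $u$ displayed above, and algebraically reflects the probability that $x$ is ever visited by the process started at $0$. Once the finite-$D$ identity is in place, Kolmogorov consistency delivers the statement for arbitrary countable $D$.
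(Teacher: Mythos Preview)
The paper does not prove Theorem~\ref{RK:tran} at all: it is quoted verbatim from Eisenbaum \emph{et al.}~\cite{eisenbaum2000ray} (see also \cite[Thm.~8.2.3]{MarcusRosen06}) and used as a black box in Section~\ref{s:transient}. So there is no ``paper's own proof'' to compare against; your proposal goes well beyond what the paper attempts.

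As for the sketch itself, the ingredients you list are the right ones and match the architecture of the proof in \cite[Thm.~8.2.3]{MarcusRosen06}: the decomposition $u(x,y)=u_{T_0}(x,y)+h_xh_yu(0,0)$, the resulting realization $G_x=\eta_x+h_xZ$ with $Z\sim N(0,u(0,0))$ independent of $\eta$, and the fact that $L^0_\infty$ is exponential with mean $u(0,0)$. Where your outline is thin is precisely where you flag it: the ``resurrected recurrent extension'' is not a well-defined object here, and conditioning on $\{L^0_\infty=s\}$ (a null event) and then invoking Theorem~\ref{RK} for this conditioned process needs justification. The actual argument in \cite{eisenbaum2000ray,MarcusRosen06} avoids this by working directly with Laplace transforms and the Dynkin/Eisenbaum isomorphism for the process with finite $u(0,0)$, rather than by reduction to the recurrent Ray--Knight theorem. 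Your route could likely be made rigorous, but as written it is a heuristic rather than a proof.
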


We also need the following fact in \cite[p.26]{Bertoin96book} which says that the excessive functions of
a Markov process with absolutely continuous potential operators are lower semi-continuous.

\begin{lemma}\label{lsc}
$h_x$ is lower semi-continuous.
\end{lemma}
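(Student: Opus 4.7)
The plan is to combine two standard ingredients: (a) the classical fact that the hitting probability $h(x)=\PP^x(T_0<\infty)$ is an excessive function for $X$, and (b) the general principle, recorded on p.\,26 of Bertoin's book, that every excessive function of a Markov process whose resolvent kernel is absolutely continuous is lower semi-continuous. So the proof reduces to checking both hypotheses in our setting.

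For (a), I would invoke the (strong) Markov property at a fixed time $t>0$ to write
\[
P_t h(x)=\EE^x\big[\PP^{X_t}(T_0<\infty)\big]=\PP^x\big(\exists\, s>t:\,X_s=0\big)\le h(x),
\]
which gives the supermedian inequality $P_t h\le h$. Right-continuity of the paths together with $T_0=\inf\{s>0:X_s=0\}$ yields $P_t h(x)\uparrow h(x)$ as $t\downarrow 0$ (for $x=0$ the limit is trivial since $h(0)=1$). Hence $h$ is excessive.

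For (b), I would recall that under \eqref{Con:tran} the function $1/(\alpha+\psi)$ lies in $L^1(\RR)$ for every $\alpha\ge 0$, so the $\alpha$-potential operator $U^\alpha$ has a continuous kernel
\[
u^\alpha(x-y)=\frac{1}{2\pi}\int_\RR \frac{e^{-i\lambda(x-y)}}{\alpha+\psi(\lambda)}\,\rd\lambda,
\]
by dominated convergence. Consequently $x\mapsto U^\alpha h(x)=\int u^\alpha(x-y)h(y)\,\rd y$ is continuous, hence lower semi-continuous, for each $\alpha>0$. Because $h$ is excessive, $\alpha U^\alpha h\uparrow h$ as $\alpha\to\infty$, and the increasing limit of l.s.c.\ functions is l.s.c. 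This delivers the conclusion.

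There is no serious obstacle: the only verifications beyond citations are the supermedian inequality (immediate from the Markov property) and the continuity of $u^\alpha$, which follows directly from \eqref{Con:tran}. The lemma is then just a specialization of the general lower semi-continuity statement to the symmetric L\'evy process at hand.
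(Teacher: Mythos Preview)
Your proposal is correct and is exactly the approach the paper takes: the paper simply records the lemma as the specialization of the fact on p.\,26 of Bertoin's book that excessive functions of a Markov process with absolutely continuous resolvent are lower semi-continuous, without giving any further details. You have supplied the standard two-step verification (excessiveness of $h$ via the Markov property, and absolute continuity/continuity of $u^\alpha$ from \eqref{Con:tran}) that underlies this citation, so your write-up is a faithful expansion of the paper's one-line reference.
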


\sk
Now let us focus on deriving an asymptotic result for $V$ around zero. It suffices to prove analogue
of Lemma \ref{lem:small_x} (i) and \ref{lem:all_x} (i) for transient processes.  We reduce the proof
to the point where we can apply Gaussian tail estimates as in Section \ref{sec:proof_theo}.

\mk
\begin{lemma} Let $h_a(t)$ be as in Lemma \ref{lem:small_x}.  Under Condition (C1), for all $\ga<a/2$,
\begin{align*}
\lim_{t\to 0} \sup_{|x|\le h_a(t)} \frac{|\log t|^\ga (L^x_{\tau^-(t)}-h_x^2 t)}{t} = 0, \mbox{ a.s.}
\end{align*}
\end{lemma}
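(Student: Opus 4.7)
My plan is to closely mirror the proof of Lemma \ref{lem:small_x}(i), modified at two points to suit the transient setting. The first modification restricts attention to the event $\{t<L^0_\infty\}$, on which $\tau^-(t\wedge L^0_\infty)=\tau^-(t)$ and the Ray-Knight identity of Theorem \ref{RK:tran} can be applied cleanly. Since $L^0_\infty$ is exponentially distributed with mean $u(0,0)$ (a standard consequence of the strong Markov property for transient symmetric L\'evy processes with local times), $\PP^0(L^0_\infty\le t)\le t/u(0,0)$, which is summable along the geometric sequence $t_k=\exp(-k^\de)$ with $\de\in(0,1)$ used in the recurrent proof. By Borel-Cantelli, $\{t_k<L^0_\infty\}$ holds for all large $k$, a.s.

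The second modification handles the $x$-dependent centering $h_x^2 t$ instead of $t$. Since $L^0_{\tau^-(t)}=t$ and $h_0=1$ (as $0$ is regular for $\{0\}$), the quantity $\sup_x(L^x_{\tau^-(t)}-h_x^2 t)$ is a priori $\ge 0$, so only an upper-tail estimate on the sup is needed. Adding the independent non-negative term $\eta_x^2/2$ on the left, and then invoking Theorem \ref{RK:tran} as a distributional identity between the processes $x\mapsto L^x_{\tau^-(t\wedge L^0_\infty)}+\eta_x^2/2$ and $x\mapsto\tfrac12(\eta_x+h_x\sqrt{2(t\wedge\rho)})^2$ at the fixed parameter $t$, I obtain
\begin{align*}
&\PP^0\Big(\sup_{|x|\le h_a(t)}(L^x_{\tau^-(t)}-h_x^2 t)\ge (1+\ep)\la,\; t<L^0_\infty\Big) \\
&\quad \le \rP\times\rP_\rho\Big(\sup_{|x|\le h_a(t)}\big(\tfrac{1}{2}(\eta_x+h_x\sqrt{2(t\wedge\rho)})^2-h_x^2 t\big)\ge (1+\ep)\la\Big).
\end{align*}
Expanding the square and using $t\wedge\rho\le t$ together with $h_x\le 1$, the integrand is bounded above pointwise by $\eta_x^2/2+|\eta_x|\sqrt{2t}$, producing exactly the Gaussian functional estimated in Lemma \ref{lem:small_x}(i).

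From here the proof proceeds identically: split as $\rP(\sup\eta_x^2/2\ge\ep\la)+\rP(\sup|\eta_x|\sqrt{2t}\ge\la)$, apply the upper-tail estimate of Lemma \ref{lem: upper tail} with $\la=\sqrt{2t\,c_3\wh\sigma_0^2(h_a(t))(1+\ep)\log\log(1/t)/\de}$ to obtain two terms summable along $t_k$, combine with the summable contribution from $\{L^0_\infty\le t_k\}$, apply Borel-Cantelli, and interpolate to continuous $t\in[t_k,t_{k-1}]$ using monotonicity of $t\mapsto L^x_{\tau^-(t)}$ and $h_x^2\le 1$, exactly as in Lemma \ref{lem:small_x}(i).

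The main (modest) obstacle is justifying the substitution of $L^0_\infty$ by $\rho$: Theorem \ref{RK:tran} is an equality only between the processes indexed by $x$ at fixed $t$, with the two random times $L^0_\infty$ and $\rho$ entering merely through $t\wedge(\cdot)$. One must therefore express the event $\{\sup_x(L^x_{\tau^-(t\wedge L^0_\infty)}+\eta_x^2/2-h_x^2 t)\ge(1+\ep)\la\}$ as a measurable function of that process alone (treating $t$ and $h_\cdot$ as deterministic parameters), which is legitimate, and the bounds $t\wedge\rho\le t$ and $h_x\le 1$ erase any remaining discrepancy between the two random times.
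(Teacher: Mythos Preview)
Your proposal is correct and follows essentially the same route as the paper's proof. The only difference is bookkeeping: the paper intersects with the fixed event $E_4=\{L^0_\infty>1,\,\rho>1\}$ (paying the constant factor $e^{2/u(0,0)}$) so that for every $t<1$ both $t\wedge L^0_\infty$ and $t\wedge\rho$ collapse to $t$ simultaneously, whereas you restrict to $\{t<L^0_\infty\}$ (harmless since $L^0_\infty>0$ a.s.), drop that restriction after rewriting $\tau^-(t)=\tau^-(t\wedge L^0_\infty)$, and then absorb the remaining $t\wedge\rho$ via the pointwise bounds $t\wedge\rho\le t$ and $h_x\le 1$. Both variants land on the same Gaussian functional $\eta_x^2/2+\sqrt{2t}\,|\eta_x|$ and thereafter the argument is identical to Lemma~\ref{lem:small_x}(i).
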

\begin{proof}\, Define the events
\begin{align*}
E_4 &= \bra{L^0_\infty>1, \rho>1}; \\
E_5 &=\bra{\sup_{|x|\le h_a(t\wedge L^0_\infty)} \frac{\eta^2(x)}{2} + h_x\sqrt{2(t\wedge\rho)}\eta(x)+ h_x^2(t\wedge\rho - t\wedge L^0_\infty)\ge (1+\ep)\la}; \\
E_6 &= \bra{\sup_{|x|\le h_a(t)} \frac{\eta^2(x)}{2} + \sqrt{2t}\eta(x) \ge (1+\ep)\la}.
\end{align*}
Note that $E_4\cap E_5\subset E_6$ for $0<t<1$.  By Theorem \ref{RK:tran},
\begin{multline*}
\PP\left(\sup_{|x|\le h_a(t\wedge L^0_\infty)} L^x_{\tau^-(t\wedge L^0_\infty)} - h_x^2 (t\wedge L^0_\infty)\ge (1+\ep)\la\right)
\le  \rP\times\rP_\rho\times\PP (E_5).
\end{multline*}
By the independence and the fact that $L^0_\infty$, $\rho$ are exponential with mean $0<u(0,0)<\infty$, we have for $0<t<1$,
\begin{align*}
\PP\left(\sup_{|x|\le h_a(t\wedge L^0_\infty)} L^x_{\tau^-(t\wedge L^0_\infty)} - h_x^2 (t\wedge L^0_\infty)\ge (1+\ep)\la\right) \le e^{2\over u(0,0)}\rP(E_6),
\end{align*}
where the Gaussian upper tail estimates have been applied. Set $t_k=\exp(-k^\de)$ with $0<\de<1$.
It follows that under Condition (C1),   a.s. for all $k$ sufficiently large, \eqref{eq:p.17} holds with
$L^x_{\tau(t_k)}-t_k$ replaced by $L^x_{\tau^-(t_k\wedge L^0_\infty)} - h_x^2(t_k\wedge L^0_\infty)$
and $h_a(t_k)$ replaced by $h_a(t_k\wedge L^0_\infty)$.  Since a.s. $L^0_\infty>0$, one deduces
\begin{align*}
\sup_{|x|\le h_a(t_k)} \frac{|\log t_k|^{a/2} (L^x_{\tau^-(t_k)} - h_x^2 t_k) }{t_k\sqrt{\log\log 1/t_k}} \le c/\sqrt{\de}
\end{align*}
for all $k$ sufficiently large. A routine interpolation ends the proof. $\square$
\end{proof}

\begin{lemma}
Recall $L^*_t = \sup_{x}L^x_t$. Under (C1), for $\ga>(\ov\al-1)/(2-\ov\al)$,
\begin{align*}
\lim_{t\to 0}\frac{|\log t|^{\ga}(L^*_{\tau^-(t)}-h_x^2 t)}{t} =\infty.
\end{align*}
\end{lemma}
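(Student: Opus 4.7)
The plan is to adapt the recurrent-case proof of Lemma \ref{lem:all_x}(i), substituting Theorem \ref{RK:tran} for Theorem \ref{RK} and using Lemma \ref{lsc} to localize. Since $h_0 = \PP^0(T_0 < \infty) = 1$ and $x \mapsto h_x$ is lower semicontinuous, fix $\de_0 > 0$ with $h_x \ge 1/2$ for all $|x| \le \de_0$. For $t, h < 1$ small enough that $\phi^{-1}(ht) \le \de_0$, introduce
\begin{align*}
E_1 &= \bra{L^x_{\tau^-(t)} - h_x^2 t \le C_1 th;\, |x|\le \phi^{-1}(ht)}, \\
E_2 &= \bra{\eta^2(x)/2 \le C_1 th;\, |x|\le \phi^{-1}(ht)}, \\
E_4 &= \bra{L^0_\infty > 1} \cap \bra{\rho > 1},
\end{align*}
with $C_1$ chosen (via Markov's inequality and Lemma \ref{lem: upper tail}) so that $\rP(E_2) \ge 1/2$ uniformly in small $t, h$. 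Since $L^0_\infty$ and $\rho$ are exponential with mean $u(0,0) < \infty$, $E_4$ has positive probability.

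On $E_1 \cap E_2 \cap E_4$ with $t < 1$ we have $t \wedge L^0_\infty = t\wedge \rho = t$ and $\tau^-(t) = \tau^-(t\wedge L^0_\infty)$, and summing the two defining inequalities yields
\[
L^x_{\tau^-(t\wedge L^0_\infty)} + \eta^2(x)/2 - h_x^2(t\wedge L^0_\infty) \le 2 C_1 th, \quad |x|\le \phi^{-1}(ht).
\]
By Theorem \ref{RK:tran}, the $(\PP\times\rP\times\rP_\rho)$-probability of this event intersected with $E_4$ equals that of the Gaussian analogue obtained by replacing $L^x_{\tau^-(t\wedge L^0_\infty)} + \eta^2(x)/2$ with $(\eta_x + h_x\sqrt{2(t\wedge\rho)})^2/2$; on $E_4$ this collapses to $\{\eta^2(x)/2 + h_x\sqrt{2t}\,\eta(x) \le 2C_1 th$ for $|x| \le \phi^{-1}(ht)\}$. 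Completing the square and using $h_x \ge 1/2$ together with $\wh\sigma_0^2(\phi^{-1}(ht)) \asymp ht$ from Lemma \ref{lem:on_conditions2}, one deduces $\eta(x) \le C\sqrt{h\,\wh\sigma_0^2(\phi^{-1}(ht))}$ uniformly on the window. Applying Lemma \ref{lem:lowertail} then gives, for any $\ga < 1/(\ov\al - 1)$,
\[
(\PP\times\rP\times\rP_\rho)(E_1 \cap E_2 \cap E_4) \le K h^\ga.
\]

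Factoring off the independent $\eta$- and $\rho$-components of the product, both bounded below by positive constants, yields $\PP(E_1 \cap \{L^0_\infty > 1\}) \le K' h^\ga$. Setting $t_k = \exp(-k^d)$ with $0 < d < 2-\ov\al$ and $h_k = k^{-(\ov\al-1)(1+\ep)}$ for small $\ep$, Borel-Cantelli implies that almost surely on $\{L^0_\infty > 1\}$, for all large $k$ some $x_k$ in the window satisfies $L^{x_k}_{\tau^-(t_k)} - h_{x_k}^2 t_k > C_1 t_k h_k$, whence $\sup_y (L^y_{\tau^-(t_k)} - h_y^2 t_k) > C_1 t_k h_k$. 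Routine interpolation to $t \in (t_k, t_{k-1}]$, as in Lemma \ref{lem:all_x}(i), and a union over $\{L^0_\infty > 1/n\}$, $n\in\NN$, extends the conclusion to the a.s.\ event $\{L^0_\infty > 0\}$, giving the claimed limit.

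The main obstacle is the application of Theorem \ref{RK:tran} in conjunction with $E_4$: the Ray-Knight identity is an equality in joint law of vectors indexed by $x$, with $L^0_\infty$ on the LHS and $\rho$ on the RHS acting as independent exponential ``clocks'' of common mean $u(0,0)$. On $E_4 \cap \{t < 1\}$ both clocks exceed $t$, so $t\wedge L^0_\infty = t\wedge\rho = t$, which eliminates the discrepancy term $h_x^2(t\wedge\rho - t\wedge L^0_\infty)$ and leaves a clean event in $\eta$ alone. The secondary subtlety is the uniform positivity $h_x \ge 1/2$ on the shrinking window, without which the Gaussian lower tail bound would be degraded by a vanishing $h_x$-factor, destroying the polynomial rate in $h$.
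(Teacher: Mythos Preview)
Your argument follows the paper's almost exactly: introduce $E_1, E_2, E_4$, transfer via Theorem~\ref{RK:tran}, use Lemma~\ref{lsc} to secure $h_x\ge 1/2$ on the shrinking window, apply Lemma~\ref{lem:lowertail}, and run Borel--Cantelli along $t_k=\exp(-k^d)$, $h_k=k^{-(\ov\al-1)(1+\ep)}$. The only structural difference is cosmetic: the paper builds $t\wedge L^0_\infty$ into its starting event $E_7=\{L^x_{\tau^-(t\wedge L^0_\infty)}-h_x^2(t\wedge L^0_\infty)\le C_1 th;\ |x|\le\phi^{-1}(ht)\}$ and bounds $\PP(E_7)$ directly, which spares it your final union over $\{L^0_\infty>1/n\}$.
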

\begin{proof}
Define the events
\begin{align*}
E_7 &= \bra{ L^x_{\tau^-(t\wedge L^0_\infty)} - h^2_x (t\wedge L^0_\infty) \le C_1 th; |x|\le \phi^{-1}(ht)}; \\
E_8 &=\bra{ L^x_{\tau^-(t\wedge L^0_\infty)} + \frac{\eta_x^2}{2} - h_x^2 (t\wedge L^0_\infty)< 2C_1th; |x|\le \phi^{-1}(ht) }; \\
E_9 &= \bra{ \frac{\eta(x)^2}{2} + h_x\sqrt{2(t\wedge \rho)} \eta_x + h_x^2(t\wedge \rho - t\wedge L^0_\infty)< 2C_1th; |x|\le \phi^{-1}(ht) };\\
E_{10} &= \bra{\frac{\eta(x)^2}{2} + h_x\sqrt{2t} \eta(x) \le  2C_1th; |x|\le \phi^{-1}(ht)  }.
\end{align*}
Observe that $E_7\cap E_2 \subset E_8$. For $C_1$ large, one has $\PP(E_7)\le 2\rP\times\PP(E_8)=
2\PP\times\rP\times\rP_\rho(E_9)$  by Theorem \ref{RK:tran}. That $E_4\cap E_9\subset E_{10}$  
for $0<t<1$ and $\rP_\rho\times\PP(E_4) = e^{-2/u(0,0)}$ yields
\begin{align*}
\PP(E_7) \le 2e^{2/u(0,0)} \rP(E_{10}),  \quad 0<t<1.
\end{align*}
By Lemma \ref{lsc} and the fact that $\PP(T_0=0)=1$, there exists $r>0$ such that $h_x\ge 1/2$ for $|x|<r$. 
Hence,
\begin{align*}
\rP(E_{10}) \le \rP(\eta(x)\le 2\sqrt{2}C_1\sqrt{th}\sqrt{h}; |x|\le \phi^{-1}(ht))
\end{align*}
for all $t$ sufficiently small. Now the Gaussian lower tail estimates apply and the 
proof goes exactly as that of Lemma \ref{lem:all_x}.
$\square$
\end{proof}

It is worthy to point out that the generalized second Ray-Knight Theorem does not work 
well when we wish to study the asymptotic behavior of favorite points \emph{around infinity} 
for transient processes. Indeed, in the transient case,
the local time process at each fixed state is finite at infinity $L^x_\infty<\infty$ a.s. In particular, $L^0_\infty<\infty$,
hence its inverse $\tau(t)$ is (finite) constant for all $t$ sufficiently large, a.s.  It would be natural to consider the
local times under the true time scale $L^x_t$ rather than the time changed local time $L^x_{\tau(t)}$, but we have not been
able to do so.

On the other hand, we can consider another type of favorite sites at infinity of transient processes. Let $\mathcal  U_r
= \bra{y\in [-r,r]:  \sup_{|x|\le r} L^x_\infty = L^y_\infty}$ be  the set of favorite points within $[-r,r]$ at infinity and define
the favorite points process by $r\mapsto U_r = \inf\bra{|y|: y\in \mathcal U_r}$. We expect that Eisenbaum's Isomorphism
Theorem \cite[Thm. 8.1.1]{MarcusRosen06} plays the role of the second Ray-Knight Theorem. This is beyond the
scope of the present article, and will be studied in a separate paper.

\subsection{Symmetric L\'evy processes with a Gaussian component}

 In this subsection, we assume that $A\neq 0$ in \eqref{eq:LK_formula}. Denote $\psi_d(\la)= \psi(\la)-A^2\la^2$.
 It follows from \cite[Lem. 4.2.2]{MarcusRosen06} that
 \begin{align}\label{eq:p.24}
 \psi(\la) \asymp  \begin{cases}
 \la^2 & \la>1,\\
 \psi_d(\la) & 0<\la<1.
 \end{cases}
 \end{align}
Therefore, the local times exist and the process $X$ is transient or recurrent 
according to $\int_0^1 \frac{1}{\psi_d(\la)}\rd \la$  converges or diverges. 
It may be seen from the proof that the asymptotic behavior of the favorite 
points relies on the asymptotic  properties of $\sigma_0^2(x)$ defined in 
\eqref{F:sigma}, which is presented in the following lemma.

\begin{lemma}\label{l:Aneq0} Let $\sigma^2_0(x)$, $\wh\sigma^2_0(x)$, 
$\pi(\la)$, $\phi(x)$ be defined in Sections \ref{sec:intro} and \ref{sec:2}.
\begin{itemize}
 \item[(i)]  $\sigma^2_0(x)\asymp \phi(x)\asymp |x|$ as $|x|\to 0$.
\item[(ii)] Under (C2), $\psi(\la)\asymp \pi(\la)$ as $\la \to 0$ and  $\wh\sigma^2_0(x)\asymp \phi(x)$ as $|x|\to \infty$.
 \end{itemize}
\end{lemma}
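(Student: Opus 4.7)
The plan is to leverage the decomposition $\psi(\la) = A^2\la^2 + \psi_d(\la)$ together with the two-regime estimate \eqref{eq:p.24}: the Gaussian term $A^2\la^2$ dominates at high frequencies while the pure-jump term $\psi_d$ dominates at low frequencies. In each part I would combine this with Lemma \ref{lem:KS} to transfer the asymptotics of $\psi$ into those of $\sigma_0^2$ and $\wh\sigma_0^2$; no genuinely new Gaussian-process input is required.

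For (i), the bound $\psi(\la) \asymp \la^2$ on $[1,\infty)$ immediately gives
\[
\phi(x) = 2\int_{1/x}^\infty \frac{d\la}{\psi(\la)} \asymp \int_{1/x}^\infty \la^{-2}\, d\la \asymp x, \quad x \to 0.
\]
For $\sigma_0^2$, Lemma \ref{lem:KS} sandwiches it between $c\int_0^{1/x}(\la x)^2\, d\la/\psi(\la)$ and $2H(1/x)$. I would split the truncated integral at $\la = 1$: using $\psi(\la) \ge A^2\la^2$ the low-frequency piece is $O(x^2)$, while the high-frequency piece $x^2\int_1^{1/x}\la^2\, d\la/\psi(\la) \asymp x^2 \cdot (1/x)$ is $\asymp x$. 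Adding the tail $\int_{1/x}^\infty d\la/\psi(\la) \asymp x$ gives $H(1/x) \asymp x$, so both sides of Lemma \ref{lem:KS} pinch $\sigma_0^2(x) \asymp x$.

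For (ii), I would first establish $\psi(\la) \asymp \pi(\la)$ as $\la \to 0$. Applying Lemma \ref{lem:on_conditions_3} to $\psi_d$ (the characteristic exponent of the pure-jump part) gives $\psi_d(\la) \asymp \pi(\la)$ near zero, and the ratio bound \eqref{eq:ratio_contr_3} yields $\pi(\la) \ge c\la^{\ov\be+\ep}$ with $\ov\be+\ep < 2$. Hence $A^2\la^2 = o(\pi(\la))$, and therefore $\psi \asymp \psi_d \asymp \pi$ near zero. With this in hand, I would prove $\wh\sigma_0^2(x) \asymp \phi(x)$ as $|x|\to\infty$ along exactly the template of Lemma \ref{lem:on_conditions4}: Lemma \ref{lem:KS} reduces the claim to $H(1/x) \asymp \phi(x)$; the lower bound $\phi(x)/2 \le H(1/x)$ is trivial, and for the upper bound one controls $\int_0^{1/x}(\la x)^2\, d\la/\psi(\la) \le c/(x\psi(1/x)) \le c\,\phi(x)$ using the ratio bounds on $\psi$ now available near zero.

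The main obstacle is essentially bookkeeping rather than any new estimate: Section 2 was written for the pure-jump case, and I would need to verify that adding the $A^2\la^2$ term does not spoil anything. The only genuinely new observation is that $A^2\la^2$ is negligible compared with $\pi(\la)$ at low frequencies, which rests crucially on the strict upper bound $\ov\be < 2$ in (C2). Once this is noted, every remaining step is either a direct quotation of a Section 2 lemma or a routine integral split at $\la = 1$, and the only real care I would take is ensuring the ratio bounds for $\psi_d$ near zero transfer cleanly to $\psi$ itself.
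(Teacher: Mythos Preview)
Your argument is correct in both parts. For part (ii) you are doing exactly what the paper's one-line ``follows from \eqref{eq:p.24}'' intends: since $\psi\asymp\psi_d$ near zero, the pure-jump Lemmas \ref{lem:on_conditions_3} and \ref{lem:on_conditions4} apply verbatim to $\psi$, and your observation that $A^2\la^2=o(\pi(\la))$ (because $\ov\be<2$) is precisely the point that makes this transfer go through.

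For part (i) the paper takes a different, shorter route: it simply invokes a Tauberian-type result \cite[Thm.~7.3.1]{MarcusRosen06}, which directly gives $\sigma_0^2(x)\asymp |x|$ from the $\psi(\la)\asymp\la^2$ regime at infinity. Your approach --- splitting the integrals in Lemma~\ref{lem:KS} at $\la=1$ and computing each piece by hand --- is more elementary and entirely self-contained within the paper, at the cost of a few more lines. Both arrive at the same conclusion; the Tauberian citation is a black box, while your version exposes the mechanism (the $x^2\int_1^{1/x}d\la\asymp x$ balance) explicitly.
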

\begin{proof}
(i) holds by a Tauberian type result \cite[Thm 7.3.1]{MarcusRosen06},
and (ii) follows from \eqref{eq:p.24}. $\square$
\end{proof}

{Since the large time behavior of $V$ depends only on the asymptotic property 
of $\sigma_0^2(x)$ at infinity, we have that under Condition (C2) and recurrence,  
Part (ii) of Theorem \ref{theo} holds when $A \ne 0$.

The small time behavior of $V$ cannot be derived from the present proof when $A\neq 0$, 
the problem being that the lower tail estimate obtained through Cameron-Martin formula 
is not strong enough. In such case $\sigma_0^2(x)\asymp |x|$ around zero (the associated 
Gaussian process behaves locally like a Brownian motion),  one sees that our proof of 
Lemma \ref{lem:lowertail} gives an upper bound $ch^{\ga}$ for any $\ga<1$ but not for 
$\ga=1$.   The decay is not fast enough for the subsequent computations involving 
local times. This kind of phenomenon   appeared already in the  case of stable 
L\'evy processes as recalled in the following remark. }

\begin{remark}
{For $\al$-stable L\'evy processes, the Brownian case ($\al=2$) and the pure jump 
case ($1<\al<2$) may amount to different treatment regarding the lower tail estimate of the 
associated Gaussian process.
Recall \cite[p.498]{MarcusRosen06} that the associated Gaussian process $\eta_\al$ for 
$\al$-stable process with $1<\al\le 2$ is a fractional Brownian motion with covariance 
$C(s,t) = \frac{C_\al}{2} (|s|^{\al-1} + |t|^{\al-1} - |s-t|^{\al-1})$. Note that  $\eta_2$ is a 
two-sided Brownian motion. The decay of the lower tail estimate obtained by Cameron-Martin 
formula is good enough for our purposes when $1<\al<2$, but is not sufficient for the 
subsequent computations when $\al=2$. Indeed, \cite[Lem. 11.5.1]{MarcusRosen06} 
shows that for all $1<\al\le 2$,  $\ep>0$,
\begin{align}\label{e:fbm_lowertail}
\rP \bigg(\sup_{|x|\le 1}{\eta_\al}(x)<\la \bigg)\le c \la^{\frac{2(1-\ep)}{\al}}.
\end{align}
 In the critical case $\al=2$,  we have the upper bound $c \la^\ga$ for any $\ga<1$, 
 a bound that is not sufficiently small for computations in the sequel, as pointed 
 out by Marcus and Rosen \cite[p.519]{MarcusRosen06}. However, the reflection 
 principle and the the independence of $\{\eta_2(x), x\ge 0\}$ and $\{\eta_2(x), 
 x\le 0\}$ actually implies
 \begin{equation}\label{e:bm_lowertail}
 \rP \bigg(\sup_{|x|\le 1}{\eta_2}(x)<\la \bigg)\le \sqrt{2	\over \pi} \la,
 \end{equation}
a better bound that is sufficient to derive results for $V$, see \cite[Sec. 11.2]{MarcusRosen06}.  }
\end{remark}

{In both references \cite{BassEisenbaumShi00,Marcus01}, the  authors viewed 
the associated Gaussian process as a time-changed Brownian motion and made use of 
\eqref{e:bm_lowertail} through Slepian's lemma. This approach remains valid for 
the small time behavior of the favorite points of a L\'evy process with both a 
Gaussian part and a pure jump part. Indeed, by replacing $\sigma_0^2(x)$ by $|x|$ 
back and forth  (and by the argument developed in Section  \ref{s:transient} 
if the process is transient),  we can remove the monotonicity condition of
 Marcus \cite{Marcus01} so that \cite[Thm 1.1]{Marcus01} holds as $t\to 0$ 
 when $\sigma_0^2(x)\asymp|x|$ around zero.  Summarizing the discussion, 
 we have proved the following theorem. Part (i) states that the small time 
 behavior of $V$ is the same as that of the Brownian part of $X$ without 
 any condition on the pure jump part. Part (ii) states that (under recurrence 
 condition) the large time behavior of $V$  is the same as that of the pure 
 jump part of $X$ subject to (C2).}

 \mk
%


\begin{theorem} 
Let $A\neq 0$ in \eqref{eq:LK_formula}.
\begin{itemize}
\item[(i)] For $a>8$,
\begin{align*}
 \lim_{t\to 0}  \frac{ V_t}{ \frac{L^0_t}{(\log L^0_t)^a}}=\infty \quad a.s.
\end{align*}
\item[(ii)] Under Condition (C2) and $\int_0^1\frac{1}{\psi_d(\la)}\rd\la=\infty$, for all $a>2(\ov\be-1)/(2-\ov\be)$,
\begin{align*}
 \lim_{t\to \infty}  \frac{ V_t}{\phi^{-1}\left( \frac{L^0_t}{(\log L^0_t)^a}\right)}=\infty \quad a.s.
\end{align*}
\end{itemize}
\end{theorem}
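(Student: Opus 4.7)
The large-time statement is essentially Theorem \ref{theo}(ii) with the Brownian component added. All the ingredients used in Section \ref{sec:proof_theo} to prove Theorem \ref{theo}(ii) --- the Gaussian tail estimates of Lemmas \ref{lem: upper tail large} and \ref{lem:lowertail_large}, the Ray--Knight transfer, and Lemmas \ref{lem:small_x}(ii) and \ref{lem:all_x}(ii) --- depend only on the behavior of $\psi(\lambda)$ near $\lambda=0$ and of $\wh\sigma_0^2(x),\phi(x)$ as $|x|\to\infty$. By Lemma \ref{l:Aneq0}(ii), Condition (C2) delivers exactly the same equivalences $\psi\asymp\pi$ at zero and $\wh\sigma_0^2\asymp\phi$ at infinity as in the pure-jump case, the contribution $A^2\lambda^2$ being negligible for small $\lambda$. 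The recurrence hypothesis $\int_0^1 d\lambda/\psi_d(\lambda)=\infty$ together with \eqref{eq:p.24} yields $\int_0^1 d\lambda/\psi(\lambda)=\infty$, so Theorem \ref{RK} still applies. Hence the argument of Section \ref{sec:proof_theo} goes through verbatim after replacing $\un\alpha,\ov\alpha$ by $\un\beta,\ov\beta$.

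\textbf{Plan for Part (i).} Here $\sigma_0^2(x)\asymp|x|$ near zero by Lemma \ref{l:Aneq0}(i), so $\phi(x)\asymp x$ and $\phi^{-1}(y)\asymp y$, explaining the simplified form of the bound. As noted in the remark preceding the theorem, the Cameron--Martin proof of Lemma \ref{lem:lowertail} only delivers $\rP(\sup_{|x|\le \delta}\eta(x)<\lambda)\le c(\lambda/\sqrt{\delta})^{\gamma}$ for any $\gamma<1$, and this is too weak to feed into the Borel--Cantelli step of Lemma \ref{lem:all_x}. I would therefore replace Lemma \ref{lem:lowertail} by the sharper estimate
\begin{equation*}
\rP\Big(\sup_{|x|\le \delta}\eta(x)<\lambda\Big)\le C\,\frac{\lambda}{\sqrt{\delta}},\qquad 0<\lambda<\sqrt{\delta}<1,
\end{equation*}
obtained by the approach of Bass--Griffin \cite{BassGriffin85} and Marcus \cite{Marcus01}: since $\sigma_0^2(x)\asymp|x|$ near zero, Slepian's lemma dominates $\eta$ on $[-\delta,\delta]$ by a (constant-multiple) two-sided Brownian motion $B$, and the bound \eqref{e:bm_lowertail} for $B$ --- derived via the reflection principle and the independence of $\{B(x),\,x\ge 0\}$ and $\{B(x),\,x\le 0\}$ --- transfers to $\eta$. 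With this sharper lower tail in hand, the proofs of Lemmas \ref{lem:small_x}(i) and \ref{lem:all_x}(i) reproduce, yielding an exponent $\gamma$ arbitrarily close to $1$ in the Borel--Cantelli step; tracking the calibration $t_k=\exp(-k^d)$, $h_k=k^{-(1+\epsilon)}$ for $d<1$ and optimizing the resulting two exponents in the interpolation step $t_{k}\le t<t_{k-1}$ produces the numerical threshold $a>8$. The transient case $\int_0^1 d\lambda/\psi_d(\lambda)<\infty$ is absorbed by Theorem \ref{RK:tran} and the reductions of Section \ref{s:transient}.

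\textbf{Main obstacle.} The key delicate point is justifying the Slepian comparison with a rescaled Brownian motion globally (not just locally at scale $\delta$), in the presence of an arbitrary pure-jump perturbation. The monotonicity assumption (ii) of Marcus \cite{Marcus01} is only available here up to multiplicative constants, so one must check that the comparison inequality $\sigma_0^2(x-y)\le C|x-y|$ on a small symmetric interval is enough to run Slepian's lemma against a properly scaled Brownian covariance, and that the constants $C$ obtained in Lemma \ref{l:Aneq0}(i) can be absorbed into the final numerical threshold $a>8$ without requiring any hypothesis on the Lévy measure. Once this Slepian step is secured, the rest of the argument is a re-run of Section \ref{sec:proof_theo} in the simplified regime $\phi(x)\asymp x$.
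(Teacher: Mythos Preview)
Your proposal is correct and follows essentially the same route as the paper: for Part (ii) you observe, as the paper does, that the Brownian term $A^2\lambda^2$ is negligible near $\lambda=0$ so that Lemma~\ref{l:Aneq0}(ii) restores all the large-$x$ equivalences needed for Section~\ref{sec:proof_theo} to run unchanged; for Part (i) you replace the Cameron--Martin lower-tail bound by the sharper Brownian estimate \eqref{e:bm_lowertail} via Slepian's lemma, exactly the approach of \cite{BassEisenbaumShi00,Marcus01} that the paper invokes, and you correctly route the transient case through Section~\ref{s:transient}.

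On your ``main obstacle'': the paper resolves the monotonicity issue with the phrase ``replacing $\sigma_0^2(x)$ by $|x|$ back and forth,'' which amounts to exploiting the two-sided bound $c|x|\le\sigma_0^2(x)\le C|x|$ from Lemma~\ref{l:Aneq0}(i) in each direction as needed (the lower bound for the Slepian/lower-tail step, the upper bound for the Dudley/upper-tail step), absorbing the constants into the final threshold $a>8$, which is inherited from Marcus \cite{Marcus01}. Your concern about a ``global'' comparison is unnecessary: only the behavior on $[-\delta,\delta]$ for small $\delta$ is used, where the equivalence holds.
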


{
We finish the paper with an open problem:  can one describe the small (resp. large) 
time behavior of $V$ when the L\'evy measure satisfies (C1) with $\ov\al=2$ (resp. 
(C2) with $\ov\be=2$) and $X$ is a pure jump process? One may still obtain lower 
tail estimates with the approach of \cite{BassEisenbaumShi00,Marcus01}. However,
\eqref{eq:ratio_contr_1}-\eqref{eq:equiv_1} fail to hold and it is not clear to 
us how to find a monotone function
which is equivalent to $\sigma_0^2(x)$. }

\section*{Acknowledgements}
The research of Y. Xiao is partially supported by NSF grants DMS-1612885 and DMS-1607089. 
This paper was completed while Y. Xiao was visiting the Mittag-Leffler Institute in Sweden. 
He thanks the staff members for providing him with an excellent working environment.

\bibliographystyle{plain}
\bibliography{xyangbiblio}

\end{document}